\newcommand{\R}{\ensuremath{\mathbb{R}}}
\newcommand{\N}{\ensuremath{\mathbb{N}}}
\newcommand{\Q}{\ensuremath{\mathbb{Q}}}
\newcommand{\F}{\ensuremath{\mathcal{F}}}
\newcommand{\G}{\ensuremath{\mathcal{G}}}
\newcommand{\CF}{\ensuremath{\mathcal{F}}}
\newcommand{\z}{\ensuremath{\mathcal{Z}}}
\newcommand{\al}{\alpha}
\newcommand{\be}{\beta}
\newcommand{\s}{\Sigma}
\newcommand{\sgn}{\mathrm{sign}}
\newcommand{\D}{\Delta}
\DeclareMathOperator{\Span}{Span}
\DeclareMathOperator{\Dis}{Dis}
\def\e{\varepsilon}
\newtheorem {theorem} {Theorem}
\newtheorem {proposition} [theorem]{Proposition}
\newtheorem {lemma}  [theorem]{Lemma}
\newtheorem {mtheorem} {Theorem}
\begin{document}
\renewcommand{\arraystretch}{1.5}

\title[Melnikov analysis for planar piecewise linear vector fields with algebraic switching curve  $y^n-x^m=0$]{Melnikov analysis for planar piecewise linear vector fields with algebraic switching curve  $y^n-x^m=0$}

\author[C. C. Santos]{Cintia C. Santos$^1$}
\address{$^1$ Departamento de Matem\'{a}tica, Universidade
	Federal de Vi\c{c}osa,\ Campus Universit\'ario,
	36570-000, Vi\c cosa-MG, Brazil} 
\email{cintiacsantos@ufv.br}

\author[O.A.R. Cespedes]{Oscar A. R. Cespedes$^2$}
\address{$^2$ Departamento de Matem\'{a}tica, Universidade
	Federal de Vi\c{c}osa,\ Campus Universit\'ario,
	36570-000, Vi\c cosa-MG, Brazil} 
\email{oscar.ramirez@ufv.br}

\subjclass[2010]{34A36,37G15}

\keywords{Nonlinear switching manifold, piecewise linear differential systems, Melnikov Theory, periodic solutions}

\maketitle

\begin{abstract}
This paper is devoted to the study of  the maximum number of limit cycles, $H(m,n)$, of a planar piecewise linear differential systems with two zones separated by the curve  $y^n-x^m=0$, with $n,m$ being positive integers.  More precisely, we provide a lower estimate of $ H(m,n)$. for all $m,n\in \N$, for piecewise linear perturbations of the linear center using 
 some recent results about Chebyshev systems with positive accuracy and Melnikov Theory. 
\end{abstract}

\section{Introduction}

Non-smooth differential equations have been used extensively to describe the dynamic behavior of a large number of problems in
 in mechanics, electronics, economy and biology, see for example \cite{ColJefLazOlm2017, LliNovRod2017, Sim2010, Teixeira2011}.\\

In the last decades, one of the main topics of the qualitative theory of non-smooth differential systems is determining the number and distribution of limit cycles of a  planar piecewise linear differential system.  There is a large number of papers that address this problem assuming that the commutation curve is a straight line (see, for instance, \cite{ArtLliMedTei2014, BPT13, BraMel2013, FrePonTor2012, FrePonTor2014, GianPli2001, HanZha2010, HuaYan2013, HuaYan2014,   Li2014, LliNovTei2015b, LNT15b, LliPon2012, LliTeiTor2013,LliZha2019}).   Recently, the nonlinear case has been studied by considering that the commutation curve is of the form $y-x^m=0$, with $m$ being a positive integer,  see for example \cite{AndCesCruNov2021, ColJefLazOlm2017, LliNovRod2017, Sim2010, Teixeira2011}.\\

The main objective of this paper is to study the maximum number of limit cycles, $H(m,n)$, of a planar piecewise linear differential systems with two zones separated by the curve  $y^n-x^m=0$, with $n,m$ being positive integers.  More precisely, we provide a lower estimate of $ H(m,n)$ for piecewise linear perturbations of the linear center. For that, we consider the following planar piecewise linear vector field	
\begin{equation}\label{general-system1}
Z(x,y)=\left\{  \begin{array}{lc}
X(x,y) = \left(\begin{array}{c}
y+\displaystyle\sum_{i=1}^{k}\e^iP_i^+(x,y) \\
-x+\displaystyle\sum_{i=1}^{k}\e^iQ_i^+(x,y)
\end{array}\right), & y^n-x^m>0, \\ & \\
Y(x,y) = \left(\begin{array}{c}
y+\displaystyle\sum_{i=1}^{k}\e^iP_i^-(x,y)\\
-x+\displaystyle\sum_{i=1}^{k}\e^iQ_i^-(x,y)
\end{array}\right), & y^n-x^m<0, 
\end{array}
\right.
\end{equation}
where $m,n$ are positive integers, and $P_i^{\pm}$ and $Q_i^{\pm}$ are affine functions given by
\[ \begin{array}{rcl}
P_i^+(x,y) & = &a_{0i}+a_{1i}x+a_{2i}y, \\
P_i^-(x,y) & = &\al_{0i}+\al_{1i}x+\al_{2i}y, \\
Q_i^+(x,y) & = &b_{0i}+b_{1i}x+b_{2i}y, \\
Q_i^-(x,y) & = &\be_{0i}+\be_{1i}x+\be_{2i}y,
\end{array}\]
with $a_{ji},\al_{ji},b_{ji},\be_{ji}\in\R,$ for $i\in\{1,\ldots,k\}$ and $j\in\{0,1,2\}$. Notice that the commutation manifold of  \eqref{general-system1} is $\Sigma=\{(x,y)\in\R^2:\, y^n-x^m=0\}$.\\

In order to provide the lower estimate of $H(m,n)$ we use the Melnikov Theory of a high order,  recently developed for nonsmooth differential systems with nonlinear manifold in \cite{AndCesCruNov2021}. In short, this theory provides a sequence of functions $\Delta_i$, $ i \in\{ 1, 2,..., k\}$, which control, in some sense, the existence of isolated periodic orbits bifurcating from the unperturbed period annulus. These functions are called Melnikov Functions or Bifurcation Functions. \\

 In the sequel,  we denote by $m_\ell(m,n)$  the maximum number of limit cycles of system \eqref{general-system1} which can be obtained using the Melnikov function of order $\ell$.   In Table \ref{tab1}, we summarize  the known values for  $m_{\ell}(m, n)$.  In particular, these previous studies provided $H(2,1)\geq 4,$ $H(3,1)\geq 8,$ $H(m,1)\geq7,$ for $m\geq 4$ even, and  $H(m,1)\geq9,$ for $m\geq 5$ odd. It is important to mention that at this moment in the literature, the only case studied is  when $n=1$ and $m$ is a positive integer.

  \begin{table}[h!]

\centering 

\begin{tabular}{cccccccc}
 &  & \multicolumn{6}{c}{Order $k$} \\ \cline{3-8} 
 & \multicolumn{1}{c||}{} & \multicolumn{1}{c|}{{\bf 1}} & \multicolumn{1}{c|}{{\bf 2}} & \multicolumn{1}{c|}{{\bf 3}} &\multicolumn{1}{c|}{{\bf 4}} &\multicolumn{1}{c|}{{\bf 5}} & \multicolumn{1}{c|}{${\bf 6}$} \\ \cline{2-8} 
\multicolumn{1}{c|}{\multirow{4}{*}{\begin{sideways}Degree $m$\end{sideways}}} & \multicolumn{1}{c||}{{\bf 1}} & \multicolumn{1}{c|}{1} & \multicolumn{1}{c|}{1} & \multicolumn{1}{c|}{2} & \multicolumn{1}{c|}{3}& \multicolumn{1}{c|}{3}& \multicolumn{1}{c|}{3} \\ \cline{2-8} 

\multicolumn{1}{c|}{} & \multicolumn{1}{c||}{{\bf 2}} & \multicolumn{1}{c|}{3} & \multicolumn{1}{c|}{4} & \multicolumn{1}{c|}{4} & \multicolumn{1}{c|}{4}& \multicolumn{1}{c|}{4}& \multicolumn{1}{c|}{4} \\ \cline{2-8} 

\multicolumn{1}{c|}{} & \multicolumn{1}{c||}{ ${\bf 3}$} & \multicolumn{1}{c|}{3} & \multicolumn{1}{c|}{7} & \multicolumn{1}{c|}{7} & \multicolumn{1}{c|}{7}& \multicolumn{1}{c|}{7}& \multicolumn{1}{c|}{$8\leq m_6\leq 10$} \\ \cline{2-8} 

\multicolumn{1}{c|}{} & \multicolumn{1}{c||}{${\bf m\geq 4}$ {\bf even}} & \multicolumn{1}{c|}{4} & \multicolumn{1}{c|}{7} & \multicolumn{1}{c|}{7} & \multicolumn{1}{c|}{7}& \multicolumn{1}{c|}{7}& \multicolumn{1}{c|}{7} \\ \cline{2-8} 

\multicolumn{1}{c|}{} & \multicolumn{1}{c||}{ ${\bf m\geq 5}$ {\bf odd}} & \multicolumn{1}{c|}{3} & \multicolumn{1}{c|}{7} & \multicolumn{1}{c|}{7} & \multicolumn{1}{c|}{7}& \multicolumn{1}{c|}{7}& \multicolumn{1}{c|}{$9\leq m_6\leq 14$} \\ \cline{2-8} 
\end{tabular}
\bigskip
\caption{Known values in the research literature  for  $m_{\ell}(m, 1)$ }\label{tab1}
\end{table}  

 Our main results provide the values $m_{\ell}(m,n)$ for all $m,n\in\mathbb{N}$ and $\ell\in\{1,2,3\}$. The contribution of Theorems \ref{Theorem-Melnikov2}, \ref{Theorem-Melnikov21} and \ref{Theorem-Melnikov22} are summarized in Tables \ref{tab2}, \ref{tab3} and \ref{tab4}. 

\begin{mtheorem}\label{Theorem-Melnikov2} Let $m$ and $n$ be  even positive integers.
The following statements hold for the piecewise linear differential system \eqref{general-system1},
\begin{enumerate}
\item $m_1(m,n)=0$ and $m_2(m,n)=1$, for $\frac{m}{n}=1$;
\item $m_1(m,n)=2$ and $m_2(m,n)=4$, for $\frac{m}{n} \in \left[\frac{1}{5},\frac{1}{3}\right] \cup \left[\frac{1}{2},1 \right) \cup (1,2] \cup [3,5]$;
\item $m_1(m,n)=2$ and $m_2(m,n)=5$, for $\frac{m}{n} \in \left(0,\frac{1}{5} \right) \cup \left(\frac{1}{3},\frac{1}{2} \right) \cup (2,3) \cup (5, \infty)$.
\end{enumerate}
 Consequently,
    \[
    H(m,n) \geq \left\lbrace 
    \begin{array}{ll}
1, \text{ for }\frac{m}{n}=1, \\ 4, \text{ for } \frac{m}{n} \in \left[\frac{1}{5},\frac{1}{3}\right] \cup \left[\frac{1}{2},1 \right) \cup (1,2] \cup [3,5], \\
5, \text{ for } \frac{m}{n} \in \left(0,\frac{1}{5} \right) \cup \left(\frac{1}{3},\frac{1}{2} \right) \cup (2,3) \cup (5, \infty).\\
    \end{array} \right.
    \]

   \end{mtheorem}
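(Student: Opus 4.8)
The plan is to reduce the problem to an analysis of the first few Melnikov functions $\Delta_1$ and $\Delta_2$ associated to the period annulus of the linear center $(\dot x,\dot y)=(y,-x)$, whose orbits are circles $x^2+y^2=r^2$. Following the high-order Melnikov machinery for nonsmooth systems with nonlinear switching manifold developed in \cite{AndCesCruNov2021}, I would first parametrize the switching curve $\Sigma=\{y^n-x^m=0\}$ by the polar angle, locate the (finitely many) intersection points of a circle of radius $r$ with $\Sigma$, and then write $\Delta_1(r)$ as an integral of the perturbation one-form $P_1\,dy-Q_1\,dx$ along the arcs of the circle lying in the two zones $y^n-x^m\gtrless 0$. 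Because $m,n$ are both even, the curve $\Sigma$ has an exploitable symmetry in $x\mapsto -x$ and $y\mapsto -y$, so the circle is split into a controlled number of arcs and $\Delta_1(r)$ becomes an explicit finite linear combination, with coefficients depending on the $a_{j1},b_{j1},\al_{j1},\be_{j1}$, of functions of $r$ of the form $r^{j}$ times certain ``incomplete'' trigonometric integrals whose limits are the angles $\theta(r)$ determined by $r^{n}\sin^{n}\theta = r^{m}\cos^{m}\theta$.

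The key step is to recognize the resulting family $\mathcal F_1=\{f_1(r),\dots,f_{p}(r)\}$ spanning (after removing the parameters) the image of $\Delta_1$, and likewise the larger family $\mathcal F_2$ that spans the image of $\Delta_2$ once $\Delta_1\equiv 0$ is imposed, as an Extended Complete Chebyshev system \emph{with positive accuracy} on the relevant interval of radii. Here the ratio $m/n$ enters through the exponents in the incomplete integrals: the change of variable $s=(\cos\theta)^{m}/(\sin\theta)^{n}$ (or its inverse) turns each integrand into an algebraic function whose exponents are affine in $m/n$, and the number of independent ``new'' monomials produced — hence the dimension of $\Span\mathcal F_\ell$ and the accuracy — jumps precisely when $m/n$ crosses the rational thresholds $\tfrac15,\tfrac13,\tfrac12,1,2,3,5$ listed in the statement. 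I would verify the Chebyshev property by exhibiting the Wronskians $W_1,\dots,W_p$ as sign-definite on the interval (using the derivation-division algorithm, as in the cited Chebyshev-with-accuracy results), treating the cases $m/n$ rational-in-lowest-terms versus irrational uniformly by working with the exponents $m/n\in\mathbb{Q}\cup(\R\setminus\mathbb{Q})$ symbolically. Once $\mathcal F_\ell$ is ECT with accuracy $\kappa_\ell$, the maximum number of isolated zeros of a generic element is $\dim\Span\mathcal F_\ell - 1 + \kappa_\ell$, which yields $m_1(m,n)$ and $m_2(m,n)$ exactly as tabulated.

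Finally, from $m_\ell(m,n)$ one reads off the lower bound for $H(m,n)$: since each simple zero $r_0>0$ of a Melnikov function of the first nonvanishing order persists as a limit cycle of \eqref{general-system1} for $\e$ small (by the standard bifurcation argument recalled from \cite{AndCesCruNov2021}), and since one may choose the affine coefficients so that $\Delta_1$ or, in the $\frac mn=1$ case, $\Delta_2$ realizes the maximal number of simple positive zeros, we obtain $H(m,n)\ge m_2(m,n)$ in every line of the trichotomy, i.e. the three displayed bounds $1$, $4$, $5$. The main obstacle I anticipate is the Chebyshev-with-accuracy verification: showing that the generating family has \emph{exactly} the claimed accuracy — not more — on the whole parameter range requires a careful, case-by-case computation of leading Wronskians near the endpoints of the radius interval, and it is there that the delicate dependence on which interval $m/n$ falls into (and in particular the special value $m/n=1$, where several integrals degenerate and $\Delta_1$ collapses) has to be handled with care; the rest of the argument is a fairly mechanical, if lengthy, bookkeeping of trigonometric integrals.
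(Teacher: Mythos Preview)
Your plan is essentially the paper's approach: pass to polar coordinates, compute $\Delta_1$ and (after imposing $\Delta_1\equiv 0$) $\Delta_2$ via the machinery of \cite{AndCesCruNov2021}, and then determine the maximal number of positive zeros by showing the resulting families are ECT-systems, possibly with accuracy one, with the accuracy changing exactly at the thresholds $\tfrac15,\tfrac13,\tfrac12,1,2,3,5$. The paper's execution differs from your sketch only in the concrete bookkeeping: it uses the change of variable $x=r\cos\theta_1(r)$ (rather than your proposed substitution in $s$), which reduces $\varrho_1$ and $\varrho_2$ to explicit linear combinations of the elementary functions $u_i^k(x)$ (powers of $x$ and $(x+x^{2k-1})\arctan(x^{k-1})$), so that the Wronskians factor as a power of $x$ times a low-degree polynomial in $x^{2k-2}$ whose sign pattern is read off directly---no ``incomplete trigonometric integrals'' survive, and it is the number of positive roots of that last polynomial (not the dimension of the span) that jumps at the listed thresholds.
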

 
    \begin{mtheorem}\label{Theorem-Melnikov21}  Let $m$ and $n$ be odd positive integers. 
    	The following statements hold for the piecewise linear differential system \eqref{general-system1},
    	\begin{enumerate}
    		\item $m_1(m,n)=m_2(m,n)=1$ and $m_3(m,n)=2$, for $\frac{m}{n}=1$;
    		\item $m_1(m,n)=2$ and $m_\lambda(m,n)=7$, for $\frac{m}{n} \in \left(\frac{2}{3},\frac{3}{4} \right] \cup \left[\frac{4}{3},\frac{3}{2}\right)$;
    		\item $m_1(m,n)=3$ and $m_\lambda(m,n)=7$, for $\frac{m}{n} \in \left(0,\frac{1}{2}\right) \cup (2, \infty)$;
    		\item $m_1(m,n)=2$ and $m_\lambda(m,n)=8$, for $\frac{m}{n} \in \left(\frac{1}{2},\frac{2}{3} \right) \cup \left(\frac{3}{4}, 1\right) \cup \left(1,\frac{4}{3}\right) \cup \left(\frac{3}{2},2\right)$,
    	\end{enumerate}
where $\lambda\in\{2,3\}$.  Consequently,
    	  \[
    	  H(m,n) \geq \left\lbrace 
    	  \begin{array}{ll}
    	  2, \text{ for }\frac{m}{n}=1, \\
    	  7, \text{ for } \frac{m}{n} \in \left(0,\frac{1}{2}\right) \cup \left(\frac{2}{3},\frac{3}{4}\right] \cup \left[\frac{4}{3},\frac{3}{2}\right) \cup (2, \infty), \\
    	  8, \text{ for } \frac{m}{n} \in \left(\frac{1}{2},\frac{2}{3} \right) \cup \left(\frac{3}{4}, 1 \right) \cup \left(1,\frac{4}{3} \right) \cup \left(\frac{3}{2},2 \right).
    	  \end{array} \right.
    	  \]
    \end{mtheorem}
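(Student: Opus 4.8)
The plan is to derive each $m_\ell(m,n)$ by computing the $\ell$-th Melnikov function $\Delta_\ell$ of \eqref{general-system1}, identifying the finite-dimensional space of functions spanned by $\Delta_\ell$ as the perturbation parameters vary, and counting its zeros with the theory of Chebyshev systems with positive accuracy; the lower bounds for $H(m,n)$ then follow at once, since a simple zero of $\Delta_\ell$ (with $\Delta_1\equiv\cdots\equiv\Delta_{\ell-1}\equiv 0$) produces a hyperbolic limit cycle of \eqref{general-system1} for $|\e|$ small, by the high-order Melnikov theory of \cite{AndCesCruNov2021}. First I would record the geometry: the unperturbed field is the harmonic oscillator, whose period annulus consists of the circles $x^2+y^2=\rho^2$, all of period $2\pi$, with first integral $H=\tfrac12(x^2+y^2)$. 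Because $m$ and $n$ are odd, the switching curve $\s=\{y^n=x^m\}$ is invariant under $(x,y)\mapsto(-x,-y)$, and a short monotonicity argument shows that each circle meets $\s$ at exactly one pair of antipodal points $\pm p(\rho)$; hence the two arcs that $\s$ cuts on each circle are half-circles. Writing $p(\rho)=(t^{\,n},t^{\,m})$ when $m/n\neq 1$ (and using the line $y=x$ when $m/n=1$), the map $t\mapsto\rho=\sqrt{t^{2n}+t^{2m}}$ is an increasing bijection of $(0,\infty)$, which I would use as the coordinate on the annulus.

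Next I would compute $\Delta_1$. Integrating $xP_1^{\pm}+yQ_1^{\pm}$ along the two half-circles and using that an angular interval of length $\pi$ annihilates the oscillatory part of every degree-two trigonometric monomial, one obtains
\[
\Delta_1(t)=A\,t^{\,m}+B\,t^{\,n}+C\,(t^{2n}+t^{2m}),
\]
with $A,B,C$ independent linear combinations of the coefficients of $P_1^{\pm},Q_1^{\pm}$. For $m/n=1$ this collapses to $\Span\{t,t^2\}$, an ECT-system, so $m_1=1$. For $m/n\neq 1$ the three generators are independent but do not always form an ECT-system on $(0,\infty)$: both $n$ and $m$ lie strictly between $2\min(m,n)$ and $2\max(m,n)$ exactly when $m/n\in(0,\tfrac12)\cup(2,\infty)$, and a Descartes-type count then allows three positive zeros of the four-term generalized polynomial $\Delta_1$, whereas in the complementary range $m/n\in(\tfrac12,2)\setminus\{1\}$ at most two zeros occur; explicit choices of $A,B,C$ realize both bounds. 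This yields the values of $m_1(m,n)$ in statements (1)--(4).

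For orders two and three I would impose the degeneracy conditions. Since $\Delta_1\not\equiv 0$ in general, $\Delta_1\equiv 0$ is equivalent to the linear relations $A=B=C=0$ on the first-order parameters; under them I would compute $\Delta_2$ from the second-order formula of \cite{AndCesCruNov2021}, which superposes the genuinely second-order perturbations with the bilinear contribution of the first-order ones. The half-circle geometry again makes every angular integral elementary, so $\Delta_2$ lies in an explicit vector space $V_2=V_2(m,n)$ generated by monomials $t^{\,jn+km}$ (with a logarithmic factor appearing at the resonant ratios of $m/n$), and one checks that the parameters surviving $\Delta_1\equiv 0$ make the relevant coefficients free. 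Computing $\dim V_2$ and its Chebyshev accuracy in each range of $m/n$ gives $m_2=1$ for $m/n=1$, and for $m/n\neq 1$ a maximum of $7$ zeros on $(0,\tfrac12)\cup(\tfrac23,\tfrac34]\cup[\tfrac43,\tfrac32)\cup(2,\infty)$ and of $8$ on the remaining ratios of $(\tfrac12,2)\setminus\{1\}$; a parallel but shorter computation shows that the order-three function does not increase the count there, so $m_3=m_2$ in those ranges. Finally, for $m/n=1$ I would go one order further: with $\Delta_1\equiv\Delta_2\equiv 0$, $\Delta_3$ spans a three-dimensional ECT-system, so $m_3(m,m)=2$. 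Collecting these and keeping, for each range, the largest of the $m_\ell$ obtained with $\ell\in\{2,3\}$ gives exactly the displayed lower bounds for $H(m,n)$.

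I expect the evaluation of $\Delta_2$, together with the ensuing Chebyshev analysis, to be the main obstacle. Two features make it delicate: propagating the linear relations coming from $\Delta_1\equiv 0$ through the bilinear part of the second-order formula, and---above all---the \emph{resonances} among the exponents $jn+km$, which occur precisely when $m/n$ equals one of $\tfrac12,\tfrac23,\tfrac34,1,\tfrac43,\tfrac32,2$. At such a ratio two generators of $V_2$ coincide, a logarithm may enter, and the dimension or the accuracy of $V_2$ jumps; it is this phenomenon that forces the partition of $(0,\infty)$ appearing in the statement, and verifying the Chebyshev property on each resulting interval is where most of the work concentrates.
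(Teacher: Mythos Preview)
Your overall strategy coincides with the paper's: pass to polar coordinates, compute the Melnikov functions, identify the finite-dimensional span, and use Chebyshev theory. Your formula for $\Delta_1$ is also correct up to a nonvanishing factor and the reparametrization $x=t^{\,n}$ (the paper works with the variable $x=r\cos\theta_1(r)$ and obtains $\varrho_1(x)=v_0x^{k-1}+v_1(x+x^{2k-1})+v_2$, which after multiplying by $t^{\,n}$ is exactly your $At^m+Bt^n+C(t^{2n}+t^{2m})$). However, two of your specific claims are wrong and would derail the argument if followed literally.

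First, your treatment of $m_1$ via a ``Descartes-type count'' is both misstated and insufficient. The sentence ``both $n$ and $m$ lie strictly between $2\min(m,n)$ and $2\max(m,n)$'' is never true (it would require $2n<n$). More importantly, Descartes' rule for generalized polynomials gives only an \emph{upper} bound on the number of positive zeros; it does not give achievability, which is half of what $m_1(m,n)=2$ or $3$ asserts. The paper handles both directions simultaneously by showing that the ordered family $\G_3^k=[1,\,x^{k-1},\,x+x^{2k-1}]$ is an ECT-system on $(0,\infty)$ for $k\in(1,2]$ and an ET-system with accuracy one for $k>2$ (the last Wronskian $W_2$ reduces to a linear function of $x^{2k-2}$ whose sign pattern changes at $k=2$), and then invokes the symmetry $m_i(m,n)=m_i(n,m)$.

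Second, your description of $\Delta_2$ is inaccurate in two respects. No logarithms appear: the integrals are of trigonometric polynomials over half-circles and are elementary for every $k$. And $V_2$ is \emph{not} spanned by pure monomials: after the paper's normalization one finds
\[
\varrho_2(x)\in\Span\bigl(u_2^k,u_5^k,u_4^k,u_0^k,u_1^k,u_6^k,u_7^k,u_9^k\bigr)=:\Span(\G_5^k),
\]
where seven generators are monomials $x^{k-1},x^{2k-2},x^{2k-1},1,x^k,x^{3k-2},x^{3k-3}$ but the eighth is the tied combination $u_9^k(x)=x+kx^{4k-3}$. The partition of $(0,\infty)$ in the statement is \emph{not} caused by resonances among the exponents (indeed none of $\tfrac12,\tfrac23,\tfrac34,\tfrac43,\tfrac32,2$ can be realized as $m/n$ with $m,n$ both odd); it comes from the Wronskian analysis of $\G_5^k$. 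Concretely, $W_0,\ldots,W_6$ are monomials times nonzero constants for $k\in(1,\infty)\setminus\{2\}$, while $W_7$ is a monomial times $P_{2,k}(x^{4k-4})$ with $P_{2,k}(x)=k^2(2k-1)(3k-2)(4k-3)x+(k-2)(2k-3)(3k-4)$; the sign of $P_{2,k}(0)P'_{2,k}(0)$ changes precisely at $k=\tfrac43,\tfrac32,2$, giving $\z(\G_5^k)=7$ on $[\tfrac43,\tfrac32)\cup(2,\infty)$ and $\z(\G_5^k)=8$ on $(1,\tfrac43)\cup(\tfrac32,2)$. Your plan would need to replace the monomials-plus-logs picture with this eight-function family and carry out the Wronskian computation.

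Your claims about order three are correct: for $k\neq1$ the third Melnikov function lands in the same $\Span(\G_5^k)$, so $m_3=m_2$; for $k=1$ it spans $[1,x,x^2]$, an ECT-system with $\z=2$.
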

    
       \begin{mtheorem}\label{Theorem-Melnikov22}  Let $m$ and $n$ be positive integers. If $m$ is even  and $n$ is odd then the following statements hold for the piecewise linear differential system \eqref{general-system1},
\begin{enumerate}
       		\item $m_1(m,n)=3$ and $m_\lambda(m,n)=4$, for $\frac{m}{n}=2$;
       		\item $m_1(m,n)=3$ and $m_\lambda(m,n)=5$, for $\frac{m}{n}=\frac{2}{3}$;
       		\item $m_1(m,n)=3$ and $m_\lambda(m,n)=6$, for $\frac{m}{n} \in  \left(\frac{2}{3},\frac{3}{4} \right) \cup \left[\frac{4}{5},1 \right) \cup \left(1,\frac{4}{3} \right] $;
       		\item $m_1(m,n)=4$ and $m_\lambda(m,n)=6$, for $\frac{m}{n} \in \left(\frac{1}{5}, k_2 \right) \cup \left(\frac{3}{2}, k_3 \right)$;
       		\item $m_1(m,n)=3$ and $m_\lambda(m,n)=7$, for $\frac{m}{n} \in  \left(\frac{1}{2},\frac{2}{3} \right) \cup \left(\frac{3}{4},\frac{4}{5} \right) \cup \left(\frac{4}{3},\frac{3}{2} \right) \cup (k_5, \infty)$;
       		\item $m_1(m,n)=4$ and $m_\lambda(m,n)=7$, for $\frac{m}{n} \in \left(k_1,\frac{1}{5} \right) \cup \left(\frac{1}{3},\frac{1}{2} \right) $;
       		\item $m_1(m,n)=3$ and $7 \leq m_\lambda (m,n) \leq 8$, for $\frac{m}{n} \in  (2,3)$;
       		\item $m_1(m,n)=4$ and $7 \leq m_\lambda(m,n) \leq 8$, for $\frac{m}{n} \in  \left(k_2,\frac{1}{3} \right) \cup (k_3,2) $;
       		\item $m_1(m,n)=3$ and $7 \leq m_\lambda (m,n) \leq 9$, for $\frac{m}{n} \in (k_4,k_5)$;
       		\item $m_1(m,n)=3$ and $6 \leq m_\lambda (m,n) \leq 11$, for $\frac{m}{n} \in (3,k_4)$;
       		\item $m_1(m,n)=4$ and $6 \leq m_\lambda (m,n) \leq 11$, for $\frac{m}{n} \in (0,k_0) \cup (k_0,k_1)$, 
       \end{enumerate}
where $\lambda\in\{2,3\}$ and
  \begin{enumerate}[(i)]
	 	\item  $k_1$ is the irrational root in $\left(0, \frac{1}{5} \right)$ of  $$q_1(x)=2 x^6-153 x^5+894 x^4-1625 x^3+1234 x^2-444 x+48;$$
	 	\item $k_2,k_3,k_5$ are the irrational roots in $\left(\frac{1}{5},\frac{1}{3} \right), \left(\frac{3}{2},2 \right),$ and  $(3,4)$, respectively, of  
  		\[
  		\begin{array}{ll}
  		q_2(x)=&177586560 x^{17}-1447424208 x^{16}+5969663136 x^{15}-29387373904 x^{14}\\&+129626832188 x^{13}-293774330511 x^{12}+102470736381 x^{11}+1027573184492 x^{10}\\&-2645532232771 x^9+3259827826136 x^8-2344796073539 x^7+997977148820 x^6\\&-227233713561 x^5+15757275163 x^4+3311923726 x^3-563207524 x^2\\&+11249208 x+19744;
  		\end{array}
  		\]
	 	
	 	\item  $k_4$  is the irrational root in $(3,4)$ of $$q_3(x)=862 x^5-4831 x^4+8308 x^3-5186 x^2+974 x+1.$$
 \end{enumerate}

 Consequently,
       	    \[
       	    H(m,n) \geq \left\lbrace 
       	    \begin{array}{ll}
       	    4, &\text{ for }\frac{m}{n}=2, \\ 5, & \text{ for }\frac{m}{n}=\frac{2}{3}, \\ 6, &\text{ for } \frac{m}{n} \in (0,k_0) \cup (k_0,k_1)  \cup\left(\frac{1}{5}, k_2 \right) \cup \left(\frac{2}{3},\frac{3}{4} \right) \cup \left[\frac{4}{5},1 \right) \cup \left(1, \frac{4}{3} \right] \cup \left(\frac{3}{2}, k_3 \right) \cup (3,k_4), \\
       	    7, & \text{ for } \frac{m}{n} \in \left(k_1,\frac{1}{5} \right) \cup \left(k_2,\frac{1}{3} \right) \cup \left(\frac{1}{3},\frac{1}{2} \right) \cup \left(\frac{1}{2},\frac{2}{3} \right) \cup \left(\frac{3}{4},\frac{4}{5} \right) \cup \left(\frac{4}{3},\frac{3}{2} \right) \cup (k_3,2) \\ &\hspace{1.5cm} \cup (2,3) \cup (k_4,k_5) \cup (k_5, \infty).\\
       	    \end{array} \right.
       	    \]  
       	\end{mtheorem}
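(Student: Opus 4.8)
The plan is to follow the scheme used for Theorems \ref{Theorem-Melnikov2} and \ref{Theorem-Melnikov21}, adapted to the mixed-parity case, organizing the argument around three ingredients: an explicit computation of the Melnikov functions $\Delta_1,\Delta_2,\Delta_3$ of system \eqref{general-system1} via the recursive formulas of \cite{AndCesCruNov2021}; the identification of the real vector space spanned by each $\Delta_\ell$; and a zero-counting argument based on the theory of Chebyshev systems with positive accuracy.

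First I would fix the geometry. Since $m$ is even and $n$ is odd, the real $n$-th root is well defined and $\Sigma=\{y^n-x^m=0\}$ is the graph $y=|x|^{m/n}\ge 0$, symmetric under $x\mapsto-x$ but not under $(x,y)\mapsto(-x,-y)$; as $m\ne n$, each circle $x^2+y^2=r^2$ of the unperturbed period annulus meets $\Sigma$ in exactly two points, symmetric in the $x$-coordinate, whose angular coordinates $\theta_\pm(r)$ solve $r^{n}\sin^{n}\theta=r^{m}\cos^{m}\theta$ and genuinely depend on $r$. Parametrizing the annulus by $r\in(0,\infty)$ and writing $\Delta_1(r)$ as the sum of the line integrals of the perturbation one-form over the two circular arcs cut by $\Sigma$, a direct computation (after the change of variables that rectifies $\Sigma$) shows that $\Delta_1(r)$ is an $\R$-linear combination, with coefficients that are explicit linear forms in the parameters $a_{ji},\al_{ji},b_{ji},\be_{ji}$, of a finite family of functions of $r$ built from powers $r^{\mu}$ whose exponents depend affinely on $m/n$ and on its reciprocal, together with logarithmic factors $r^{\mu}\log r$ in the resonant sub-cases. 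The distinguished rationals $1/5,\,1/3,\,1/2,\,2/3,\,3/4,\,4/5,\,4/3,\,3/2,\,2,\,3,\,5$ separating the ranges are precisely the values of $m/n$ at which two of these exponents coincide or at which the rank of the span changes; this is what makes $m_1(m,n)$ (which is $3$ or $4$) locally constant on each range.

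Next I would compute $\Delta_2$ and, when needed, $\Delta_3$ on the algebraic subset of parameter space where $\Delta_1\equiv 0$ (and then $\Delta_2\equiv 0$). The recursion of \cite{AndCesCruNov2021} expresses these in terms of the first-order data together with derivatives of the flow of the linear center and of $\Sigma$; after imposing the relations forcing $\Delta_1\equiv 0$, the function $\Delta_\lambda$ with $\lambda\in\{2,3\}$ lies in the span of an enlarged family of monomials of the same type. For each of the eleven ranges of $m/n$ I would show that the pertinent span is a Chebyshev system with positive accuracy on $(0,\infty)$, verifying the defining Wronskian conditions by reducing each Wronskian to a generalized polynomial in $r$ whose number of zeros is bounded by the real-exponent form of Descartes' rule; this yields the maximal number $N-1+s$ of isolated zeros of $\Delta_\ell$, giving the stated values of $m_\lambda(m,n)$ where the accuracy $s$ is known exactly and the stated intervals $7\le m_\lambda\le 8$, $7\le m_\lambda\le 9$, $6\le m_\lambda\le 11$ where it is not. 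For the realizability half I would use that a Chebyshev system with accuracy $s$ attains $N-1+s$ simple zeros for an open set of coefficients, and check that the linear constraints coming from $\Delta_1\equiv 0$ (resp. $\Delta_1\equiv\Delta_2\equiv 0$) still leave enough freedom; by the Melnikov theory each such simple zero produces a hyperbolic limit cycle of \eqref{general-system1}, and taking the maximum of $m_1,m_2,m_3$ on each range yields the asserted lower bounds for $H(m,n)$.

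The main obstacle is the determination of the \emph{exact} accuracy of the Chebyshev systems attached to $\Delta_2$ and $\Delta_3$ in the generic ranges. The non-rational endpoints $k_0,\dots,k_5$ arise as the values of the continuous parameter $m/n$ at which one of the relevant Wronskians changes sign or acquires an additional real zero; locating them amounts to isolating the appropriate real roots of the auxiliary polynomials $q_1,q_2,q_3$ of degrees $6$, $17$ and $5$ in the intervals indicated in (i)--(iii), and proving those roots are irrational so that they never coincide with the rational number $m/n$ and the ranges are genuinely open and exhaustive. This last point I would settle by a rational-root test together with a Sturm-sequence (hence finite, computer-verifiable) root isolation, the degree-$17$ polynomial $q_2$ being the heaviest such computation; the residual ranges where even this analysis does not pin down $s$ are exactly the ones for which only the bounds $7\le m_\lambda\le 8$, $7\le m_\lambda\le 9$ and $6\le m_\lambda\le 11$ are asserted.
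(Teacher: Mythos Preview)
Your high-level plan---polar coordinates, explicit Melnikov functions via the recursion of \cite{AndCesCruNov2021}, identification of the linear span, then Chebyshev/Wronskian analysis---is exactly the paper's strategy. The gap is in the concrete content of the second step, and it propagates to the third.

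You assert that $\Delta_1$ (and then $\Delta_2,\Delta_3$) is an $\R$-linear combination of generalized monomials $r^{\mu}$, with exponents affine in $m/n$ and $n/m$ and with $r^{\mu}\log r$ corrections in resonant cases. That is not what happens in the mixed-parity case. The switching angle $\theta_1(r)$ satisfies $\tan\theta_1=x^{k-1}$ (with $k=m/n$ and $x=r\cos\theta_1(r)$), and because the integrals over the two arcs have $\theta_1(r)$ as a limit, the first Melnikov function already contains a term proportional to
\[
u_{11}^{k}(x)=\bigl(x+x^{2k-1}\bigr)\arctan\!\bigl(x^{k-1}\bigr),
\]
while $\Delta_2$ and $\Delta_3$ contain $u_{10}^{k}$ and $u_{12}^{k}$, both of which also carry the factor $\arctan(x^{k-1})$. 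No logarithm appears anywhere, and no exponent depends on $n/m$. Consequently the relevant Chebyshev families are not monomial; in the paper they are $\G_4^{k}$ for $\Delta_1$ and $\G_6^{k},\G_7^{k},\G_9^{k}$ for $\Delta_2,\Delta_3$, and their Wronskians are rational functions of $x^{2k-2}$ with polynomial numerators $P_{j,k}$ after factoring out powers of $x$ and of $x^{2}+x^{2k}$.

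This matters for your third step. The rational thresholds $1/5,1/3,1/2,\dots$ and the irrational $k_0,\dots,k_5$ do \emph{not} come from exponent collisions or rank drops of a monomial span; they are the values of $k$ at which a coefficient or the discriminant of one of the $P_{j,k}$ changes sign (for instance, for $m_1$ the split at $k=3/2$ and $k=2$ is governed by $P_{1,k}(x)=k(2k-3)x+(k-2)(2k-1)$, whose coefficients change sign there). The polynomials $q_1,q_2,q_3$ are precisely (factors of) these coefficient and discriminant polynomials, not artifacts of a Sturm-sequence localization of a single Wronskian. So while Descartes' rule does enter, it is applied to these auxiliary one-variable polynomials in $k$ and in $x^{2k-2}$, not to ``generalized polynomials in $r$'' as you describe. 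Once you compute $\Delta_1,\Delta_2,\Delta_3$ correctly and recognize the arctangent, your outline can be executed essentially as in the paper (Propositions \ref{prop5}, \ref{prop8}, \ref{prop9} and Lemma \ref{prop.pol}).
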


  \begin{table}[H]
  	\begin{center}
  		\begin{tabular}{ccccc}
  			&  & \multicolumn{3}{c}{Order $k$} \\ \cline{3-5} 
  			\multicolumn{1}{c}{}	& \multicolumn{1}{c||}{} & \multicolumn{1}{c|}{{\bf 1}} & \multicolumn{1}{c|}{{\bf 2}} & \multicolumn{1}{c|}{{\bf 3}} \\ \cline{2-5} 
  			\multicolumn{1}{c|}{\multirow{5}{*}{\begin{sideways}${\bf m/n \in}$\end{sideways}}} & \multicolumn{1}{c||}{${\bf \left(0,\frac{1}{2} \right) \cup (2, \infty)}$} & \multicolumn{1}{c|}{3} & \multicolumn{1}{c|}{7} & \multicolumn{1}{c|}{7}  \\ \cline{2-5} 
  					
  					\multicolumn{1}{c|}{} & \multicolumn{1}{c||}{${\bf  \left(\frac{1}{2},\frac{2}{3} \right) \cup \left(\frac{3}{4},1 \right) \cup \left(1,\frac{4}{3} \right) \cup \left(\frac{3}{2},2 \right)}$} & \multicolumn{1}{c|}{2} & \multicolumn{1}{c|}{8} & \multicolumn{1}{c|}{8} \\ \cline{2-5} 
  					
  					\multicolumn{1}{c|}{} & \multicolumn{1}{c||}{ ${\bf  \left(\frac{2}{3},\frac{3}{4} \right] \cup \left[\frac{4}{3},\frac{3}{2} \right)}$} & \multicolumn{1}{c|}{2} & \multicolumn{1}{c|}{7} & \multicolumn{1}{c|}{7} \\ \cline{2-5} 
  					
  					\multicolumn{1}{c|}{} & \multicolumn{1}{c||}{ ${\bf \{1\}}$} & \multicolumn{1}{c|}{1} & \multicolumn{1}{c|}{1} & \multicolumn{1}{c|}{2} \\ \cline{2-5} 
  				\end{tabular}
  				\bigskip
  				\caption{Values for $m_\ell(m,n)$, with $\ell\in\{1,2,3\}$, when $m,n$ are odd positive integers} \label{tab2}
  			\end{center}
  		\end{table}

 \begin{table}[H]
\begin{tabular}{cccc}
	&  & \multicolumn{2}{c}{Order $k$} \\ \cline{3-4} 
\multicolumn{1}{c}{}	& \multicolumn{1}{c||}{} & \multicolumn{1}{c|}{{\bf 1}} & \multicolumn{1}{c|}{{\bf 2}}  \\ \cline{2-4} 
	\multicolumn{1}{c|}{\multirow{4}{*}{\begin{sideways}${\bf m/n}\in$ \end{sideways}}} & \multicolumn{1}{c||}{${\bf  \left(0,\frac{1}{5} \right) \cup \left(\frac{1}{3},\frac{1}{2} \right) \cup (2,3) \cup (5, \infty)}$} & \multicolumn{1}{c|}{2} & \multicolumn{1}{c|}{5}   \\ \cline{2-4} 
	
	\multicolumn{1}{c|}{} & \multicolumn{1}{c||}{${\bf  \left[\frac{1}{5},\frac{1}{3} \right] \cup \left[\frac{1}{2},1 \right) \cup (1,2] \cup [3,5] }$} & \multicolumn{1}{c|}{2} & \multicolumn{1}{c|}{4}  \\ \cline{2-4} 
	
	\multicolumn{1}{c|}{} & \multicolumn{1}{c||}{ ${\bf \{1\}}$} & \multicolumn{1}{c|}{0} & \multicolumn{1}{c|}{1}  \\ \cline{2-4} 
\end{tabular}
\bigskip
\caption{Values for $m_\ell(m,n)$, with $\ell\in\{1,2,3\}$, when $m,n$ are even positive integers}\label{tab3}
\end{table}

	\begin{table}[H]
		\begin{center} 
			\begin{tabular}{ccccc} 
				&  & \multicolumn{3}{c}{Order $k$} \\ \cline{3-5}
				\multicolumn{1}{c}{}	& \multicolumn{1}{c||}{} & \multicolumn{1}{c|}{{\bf 1}} & \multicolumn{1}{c|}{{\bf 2}} & \multicolumn{1}{c|}{{\bf 3}} \\ \cline{2-5} 
				\multicolumn{1}{c|}{\multirow{10}{*}{\begin{sideways}${\bf m/n\in }$\end{sideways}}} & \multicolumn{1}{c||}{${\bf  (0,k_0) \cup (k_0,k_1)}$} & \multicolumn{1}{c|}{4} & \multicolumn{1}{c|}{$6 \leq m_2 \leq 11$} & \multicolumn{1}{c|}{$6 \leq m_3 \leq 11$}  \\ \cline{2-5}
				
				\multicolumn{1}{c|}{} & \multicolumn{1}{c||}{${\bf   \left(k_1,\frac{1}{5} \right)\cup \left(\frac{1}{3},\frac{1}{2} \right)}$} & \multicolumn{1}{c|}{4} & \multicolumn{1}{c|}{7} & \multicolumn{1}{c|}{7} \\ \cline{2-5} 
				
				\multicolumn{1}{c|}{} & \multicolumn{1}{c||}{${\bf   \left(\frac{1}{5},k_2 \right) \cup \left(\frac{3}{2},k_3 \right) }$} & \multicolumn{1}{c|}{4} & \multicolumn{1}{c|}{6} & \multicolumn{1}{c|}{6} \\ \cline{2-5} 
				
				\multicolumn{1}{c|}{} & \multicolumn{1}{c||}{${\bf   \left(k_2,\frac{1}{3} \right) \cup (k_3,2) }$} & \multicolumn{1}{c|}{4} & \multicolumn{1}{c|}{$7 \leq m_2 \leq 8$} & \multicolumn{1}{c|}{$7 \leq m_3 \leq 8$} \\ \cline{2-5}
				
				\multicolumn{1}{c|}{} & \multicolumn{1}{c||}{${\bf   \left(\frac{1}{2},\frac{2}{3} \right) \cup \left(\frac{3}{4},\frac{4}{5} \right) \cup \left(\frac{4}{3},\frac{3}{2} \right) \cup (k_5, \infty)}$} & \multicolumn{1}{c|}{3} & \multicolumn{1}{c|}{7} & \multicolumn{1}{c|}{7} \\ \cline{2-5} 
				
				\multicolumn{1}{c|}{} & \multicolumn{1}{c||}{${\bf  \{\frac{2}{3}\}}$} & \multicolumn{1}{c|}{3} & \multicolumn{1}{c|}{5} & \multicolumn{1}{c|}{5} \\ \cline{2-5} 
				
				\multicolumn{1}{c|}{} & \multicolumn{1}{c||}{${\bf   \left(\frac{2}{3},\frac{3}{4} \right) \cup \left[\frac{4}{5},1 \right) \cup \left(1,\frac{4}{3} \right] }$} & \multicolumn{1}{c|}{3} & \multicolumn{1}{c|}{6} & \multicolumn{1}{c|}{6} \\ \cline{2-5}

				\multicolumn{1}{c|}{} & \multicolumn{1}{c||}{${\bf  \{2\}}$} & \multicolumn{1}{c|}{3} & \multicolumn{1}{c|}{4} & \multicolumn{1}{c|}{4} \\ \cline{2-5} 
				
				\multicolumn{1}{c|}{} & \multicolumn{1}{c||}{${\bf   (2,3)}$} & \multicolumn{1}{c|}{3} & \multicolumn{1}{c|}{$7 \leq m_2 \leq 8$} & \multicolumn{1}{c|}{$7 \leq m_3 \leq 8$} \\ \cline{2-5} 
				
				\multicolumn{1}{c|}{} & \multicolumn{1}{c||}{${\bf   (3,k_4)}$} & \multicolumn{1}{c|}{3} & \multicolumn{1}{c|}{$6 \leq m_2 \leq 11$} & \multicolumn{1}{c|}{$6 \leq m_3 \leq 11$} \\ \cline{2-5} 
				
				\multicolumn{1}{c|}{} & \multicolumn{1}{c||}{${\bf   (k_4,k_5)}$} & \multicolumn{1}{c|}{3} & \multicolumn{1}{c|}{$7 \leq m_2 \leq 9$} & \multicolumn{1}{c|}{$7 \leq m_3 \leq 9$} \\ \cline{2-5} 
			\end{tabular}
		\end{center} 
		\bigskip
		\caption{Values for $m_\ell(m,n)$, with $\ell\in\{1,2,3\}$, when $m$ is even and $n$ is an odd positive integers} \label{tab4}
	\end{table}

We remark that by applying the  linear change of coordinates $(x,y)\rightarrow (y,x)$, it follows that $m_i(m,n)=m_i(n,m)$ for $i,m,n\in \N$. For this reason, we omit the case when $m$ is an odd and $n$ is an even positive integers.

\section{Melnikov Functions}

Consider an open subset $D\subset\R^d$, $d\in\N$, $\mathbb{S}^1=\R/T$ for some period $T>0$, and $N$ a positive integer. Let $\theta_i:D\to \mathbb{S}^1$, $i=1,\ldots,M$, be functions such that $\theta_0(x)\equiv0<\theta_1(x)<\cdots<\theta_M(x)<T\equiv\theta_{M+1}(x)$, for all $x\in D$. Under the assumptions above, we consider the following piecewise smooth differential system 
\begin{equation}\label{general-system2} 
\dot{x} = \sum_{i=1}^N\e^iF_i(t,x)+\e^{N+1}R(t,x,\e),
\end{equation}
with, for $i=1,\ldots,N$, 
\[
F_i(t,x)=\left\{
\begin{array}{ll}
F_i^0(t,x), & 0<t<\theta_1(x), \\
F_i^1(t,x), & \theta_1(x)<t<\theta_2(x), \\
\vdots & \\
F_i^M(t,x), & \theta_M(x)<t<T, 
\end{array}
\right.
\]
and
\[
R(t,x,\e)=\left\{
\begin{array}{ll}
R^0(t,x,\e), & 0<t<\theta_1(x), \\
R^1(t,x,\e), & \theta_1(x)<t<\theta_2(x), \\
\vdots & \\
R^M(t,x,\e), & \theta_M(x)<t<T, 
\end{array}
\right.
\]
also $F_i^j:\mathbb{S}^1\times D\rightarrow\R^d$, $R^j:\mathbb{S}^1\times D\times (-\e_0,\e_0)\rightarrow\R^d$, $i=1,\ldots,N$ and $j=1,\ldots,M$, are $\mathcal{C}^r$ functions, $r\geq N+1$, and $T-$periodic in the variable $t$. In this case, the commutation manifold is given by $\s=\{(\theta_i(x),x);\ x\in D,\ i=0,1,\ldots,M+1 \}$.

In \cite{AndCesCruNov2021} it is shown that the Melnikov Functions  of order $i$ of the system \eqref{general-system2}  is given by
\begin{equation*}
\Delta_i(x)=\dfrac{1}{i!}z_i^M(T,x),
\end{equation*}
where $z_i^j(t,x)$ is defined recurrently for $i=1,\dots,k$ and $j=0,\dots,M$ as follows:
\begin{equation*}
\begin{array}{rl}
z^0_1(t,x)=&\displaystyle\int_0^{t}F_1^0(s,x)ds,\\
z^j_1(t,x)=&z^{j-1}_1(\theta_j(x),x)+\displaystyle\int_{\theta_j(x)}^{t}F_1^j(s,x)ds,\\
z_i^0(t,x)=&i!\displaystyle\int_{0}^t\left( F_{i}^0(s,x)+\displaystyle\sum_{l=1}^{i-1}\displaystyle\sum_{b\in S_l}\dfrac{1}{b_1!b_2!2!^{b_2}\dots b_l!l!^{b_l}}\partial^{L_b}_xF_{i-l}^0(s,x)\displaystyle\prod_{m=1}^l\left(z_m^0(s,x)\right)^{b_m}\right) ds,\\
z_i^j(t,x)=&z_i^{j-1}(\theta_j(x),x)\\
&+i!\displaystyle\int_{\theta_j(x)}^t\left( F_{i}^j(s,x)+\displaystyle\sum_{l=1}^{i-1}\displaystyle\sum_{b\in S_l}\dfrac{1}{b_1!b_2!2!^{b_2}\dots b_l!l!^{b_l}}\partial^{L_b}_xF_{i-l}^j(s,x)\displaystyle\prod_{m=1}^l\left(z_m^j(s,x)\right)^{b_m}\right) ds\\
&+\displaystyle\sum_{p=1}^{i-1}\dfrac{1}{p!}\dfrac{\partial^{p}}{\partial\varepsilon^{p}}\left(\delta_{i-p}^j\left( A_j^p(x,\varepsilon),x\right)  \right) \Big|_{\varepsilon=0},
\end{array}
\end{equation*}
where $\delta_i^j(t,x)=z_i^{j-1}(t,x)-z_i^j(t,x)$ and $A^{p}_{j}(x,\varepsilon)=\displaystyle\sum_{q=0}^p\dfrac{\varepsilon^q}{q!}\alpha_j^q(x)$ with
\begin{equation*}
\alpha_j^q(x)=\displaystyle\sum_{l=1}^q\dfrac{q!}{l!}\displaystyle\sum_{u\in S_{q,l}}D^l\theta_j(x)\left(\displaystyle\prod_{r=1}^l w_{u_r}^j(x)\right),  \text{ for } q=1,..,k,
\end{equation*}
and
\begin{equation*}
\begin{array}{rl}
w_1^j(x)=&z_{1}^{j-1}(\theta_j(x),x),\\
w_i^j(x)=&\dfrac{1}{i!}z_{i}^{j-1}(\theta_j(x),x)\\
&+\displaystyle\sum_{a=1}^{i-1}\displaystyle\sum_{b\in S_a}\dfrac{1}{(i-a)!b_1!b_2!2!^{b_2}\dots b_a!a!^{b_a}}\partial_t^{L_b}z_{i-a}^{j-1}(\theta_j(x),x)\displaystyle\prod_{m=1}^a\left(\alpha_j^m(x) \right)^{b_m}.
\end{array}
\end{equation*}
Here  $\partial_x^{L_b}G(t, x)$ denotes the derivative of order $L_b$ of a function G, with respect to the variable $x$, $S_a$ is the set of all a-tuples of non-negative integers $(b_1, b_2, \dots , b_a)$ satisfying $b_1 + 2b_2 + \dots +
ab_a = a$, $L = b_1 + b_2 + \dots + b_a$, and $S_{q,a}$  is the set of all a-tuples of positives integers $(b_1, b_2, \dots , b_a)$ satisfying $b_1 + b_2 + \dots +
b_a = q$.
In \cite{AndCesCruNov2021} the following result concerning to the Melnikov functions is proved.
\begin{mtheorem}\label{thm:melnikov}
	Consider the nonsmooth differential system \eqref{general-system2} and denote $\Delta_0=0.$ Assume that, for some $\ell\in\{1,\ldots,k\},$ $\Delta_i=0$, for $i=1,\dots,\ell-1,$ and $\Delta_{\ell}\neq0$. If  $\Delta_{\ell}(a^*)=0$ and $\det(J\Delta_{\ell}(a^*))\neq0,$ for some $a^*\in D,$ then, for $|\e|\neq0$ sufficiently small, there exists a unique $T$-periodic solution $x(t,\e)$ of system \eqref{general-system2} such that $x(0,\e)\to a^*$ as $\e\to0$.  
\end{mtheorem}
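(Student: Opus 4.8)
The plan is to recast the existence of a $T$-periodic solution of \eqref{general-system2} as a zero of a displacement map, to identify the coefficients of its $\e$-expansion with the functions $\Delta_i$ given by the recursion above, and then to apply the implicit function theorem to a suitably rescaled version of the displacement map.

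\emph{Step 1.} First I would build the time-$T$ map. Write \eqref{general-system2} as the autonomous system $\dot t=1$, $\dot x=\sum_{i=1}^N\e^iF_i(t,x)+\e^{N+1}R(t,x,\e)$ on $\mathbb{S}^1\times D$, and for $x_0$ near $a^*$ and $|\e|$ small let $\varphi(t,x_0,\e)$ be its solution with $\varphi(0,x_0,\e)=x_0$. Since the $t$-component of the phase velocity equals $1$, every trajectory crosses each sheet $\s_j=\{(\theta_j(x),x):x\in D\}$ of $\s$ transversally: the crossing-time equation $\tau=\theta_j(\varphi(\tau,x_0,\e))$ has $\partial_\tau$-derivative equal to $1$ at $\e=0$, $\tau=\theta_j(x_0)$, so by the implicit function theorem the successive crossing times $\tau_j(x_0,\e)$ are $\mathcal{C}^r$ with $\tau_j(x_0,0)=\theta_j(x_0)$. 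Because at $\e=0$ the constant trajectory meets $\s_1,\dots,\s_M$ exactly once each and in order on $(0,T)$, a compactness argument shows the same holds for $|\e|$ small. Concatenating the flows of the smooth fields $\sum\e^iF_i^j+\e^{N+1}R^j$ over the intervals $[\tau_{j-1},\tau_j]$, with continuous matching at the crossings, then gives a map $\varphi(T,\cdot,\cdot)$ that is $\mathcal{C}^r$ in $(x_0,\e)$, and $T$-periodic solutions near $a^*$ correspond exactly to zeros of $\mathcal{D}(x_0,\e):=\varphi(T,x_0,\e)-x_0$.

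\emph{Step 2.} Next I would expand $\mathcal{D}$ in powers of $\e$. Since $\varphi(t,x_0,0)\equiv x_0$, one has $\mathcal{D}(x_0,\e)=\sum_{i=1}^N\e^i\,\Delta_i(x_0)+\e^{N+1}\widetilde{R}(x_0,\e)$ with $\Delta_i(x_0)=\tfrac1{i!}\partial_\e^i\mathcal{D}(x_0,0)$. Differentiating the integral equation satisfied by $\varphi$ and using the Fa\`a di Bruno formula for higher derivatives — applied both to the integrands $F_{i-l}^j$ and to the compositions produced by the $\e$-dependent crossing times — reproduces the stated recursion for $z_i^j(t,x)$: the index sets $S_a$ and $S_{q,l}$ bookkeep the multinomial derivative terms, while the summand $\tfrac1{p!}\,\partial_\e^p(\delta_{i-p}^j(A_j^p(x,\e),x))|_{\e=0}$ records the correction coming from the displacement of the $j$-th crossing time. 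This identification is precisely what is established in \cite{AndCesCruNov2021}, so it may simply be quoted; carrying it out from scratch — in particular making rigorous that $\mathcal{D}$ is genuinely $\mathcal{C}^r$ across the switching and matching every combinatorial term — is the step I expect to be the main obstacle, since it amounts to a delicate induction on $i$.

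\emph{Step 3.} Finally I would run the bifurcation argument. By hypothesis $\Delta_1\equiv\cdots\equiv\Delta_{\ell-1}\equiv0$, so
\[
\mathcal{D}(x_0,\e)=\e^{\ell}\,G(x_0,\e),\qquad G(x_0,\e):=\Delta_\ell(x_0)+\e\,\Delta_{\ell+1}(x_0)+\cdots,
\]
where $G$ is $\mathcal{C}^{r-\ell}$ and $G(\cdot,0)=\Delta_\ell$. From $G(a^*,0)=\Delta_\ell(a^*)=0$ and $D_{x_0}G(a^*,0)=J\Delta_\ell(a^*)$ invertible, the implicit function theorem provides a unique $\mathcal{C}^1$ branch $\e\mapsto x_0(\e)$ near $a^*$ with $x_0(0)=a^*$ and $G(x_0(\e),\e)=0$; hence $\mathcal{D}(x_0(\e),\e)=0$, so $\varphi(\cdot,x_0(\e),\e)$ is a $T$-periodic solution and $x_0(\e)\to a^*$ as $\e\to0$. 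Conversely, any $T$-periodic solution with small $\e\neq0$ and initial point near $a^*$ satisfies $\mathcal{D}=0$, hence $G=0$, hence has initial point $x_0(\e)$ by the uniqueness clause of the implicit function theorem; this yields the asserted uniqueness.
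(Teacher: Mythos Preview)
The paper does not prove this theorem at all: it is quoted verbatim as a result established in \cite{AndCesCruNov2021}, and no argument is given here. So there is no ``paper's own proof'' against which to compare your proposal.

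That said, your outline is essentially the standard route by which results of this type are proven (and is in the spirit of what is done in \cite{AndCesCruNov2021}): construct the displacement map $\mathcal{D}(x_0,\e)=\varphi(T,x_0,\e)-x_0$ by concatenating smooth flows across the switching surfaces, expand it as $\mathcal{D}=\sum_{i\ge1}\e^i\Delta_i+\mathcal{O}(\e^{N+1})$, and then factor $\mathcal{D}=\e^\ell G$ under the hypothesis $\Delta_1=\cdots=\Delta_{\ell-1}=0$ to apply the implicit function theorem at $(a^*,0)$. Your identification of the likely obstacle is accurate: the real content lies in Step~2, where one must show that $\mathcal{D}$ is genuinely $\mathcal{C}^r$ in $(x_0,\e)$ despite the $\e$-dependent switching times, and that its Taylor coefficients coincide with the recursively defined $z_i^M(T,x)/i!$. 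This is exactly the computation carried out in \cite{AndCesCruNov2021}, and you are right that it is most cleanly handled by citing that reference rather than redoing the Fa\`a di Bruno bookkeeping from scratch. Steps~1 and~3 are correct as written.
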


\section{Chebyshev Systems}\label{sec:chebyshev}

We recall the definition of the Wronskian of orded set  $[u_0, \ldots, u_{s}]$, of $s+1$ :
\begin{equation*}\label{wrons}
W(x)= W(u_0,\ldots, u_s)(x)=\det(M(u_0, \ldots, u_s)(x)),
\end{equation*}
with
\[
M(u_0, \ldots, u_s)(x)=\left( \begin{array}{ccc}
u_0(x)& \ldots& u_s(x)\\
u'_0(x)& \ldots & u'_s(x)\\
\vdots& & \vdots\\
u_0^{(s)}(x) &&  u_s^{(s)}(x) 
\end{array}\right).
\]

Let $\mathcal{F}=\left[u_{0}, \ldots, u_{n}\right]$ be an ordered set of smooth functions defined on the closed interval $[a, b]$. We denote by \(\operatorname{Span}(\mathcal{F})\)  the set of all functions given by linear combinations of  $\F$ and \(\mathcal{Z}(\mathcal{F})\)  the maximum number of isolated zeros, counting multiplicity, of any function in $\operatorname{Span}(\mathcal{F})$. The ordered set  $\CF$ is called an {\it Extended Chebyshev} system or  {\it ECT-system} on $[a,b]$ if $\mathcal{Z}(\CF)= n$. Besides   $\CF$ is called an ET-system with {\it accuracy }$k$ on $[a,b],$  if $\mathcal{Z}(\CF)= n+k$, (see for more details \cite{KASTU1966, NOTOR2017}).\\

A classical result concerning ECT-system is the following:
\begin{theorem}[\cite{KASTU1966}]\label{t1}
Let $\F=[u_0, u_1, ..., u_n]$ be an ECT-system on a closed interval  $[a, b]$. Then, the number of isolated zeros for every element of $\mbox{Span}(\F)$ does not exceed $n$. Moreover, for each configuration of $m \leq n$ zeros, taking into account their multiplicity, there exists $F\in \mbox{Span}(\F)$ with this configuration of zeros.
\end{theorem}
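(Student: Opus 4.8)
The first assertion is almost immediate: saying that $\F=[u_0,\dots,u_n]$ is an ECT-system on $[a,b]$ means precisely $\mathcal{Z}(\F)=n$, so by definition no element of $\Span(\F)$ has more than $n$ isolated zeros in $[a,b]$ counted with multiplicity. (Equivalently, starting from the classical Wronskian formulation of the ECT property — $W(u_0,\dots,u_k)(x)\neq 0$ for all $x\in[a,b]$ and all $k\le n$ — the bound follows from a P\'olya factorization $f=w_0\bigl(c_0+\int w_1(c_1+\int w_2(\cdots))\bigr)$ with each $w_j>0$ on $[a,b]$ together with $n$ applications of Rolle's Theorem; this route also shows that a nonzero $f\in\Span(\F)$ has only finitely many zeros.) So I would dispose of this in a line and concentrate on the ``moreover'' part.

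For the realizability statement I would fix a configuration: distinct points $t_1<\cdots<t_r$ in $[a,b]$ with multiplicities $\nu_1,\dots,\nu_r\ge 1$, $\nu_1+\cdots+\nu_r=m\le n$. The plan is to reduce to the equality case $m=n$ and then settle it by linear algebra. For the reduction, when $m<n$ I would pass to the truncated tuple $\F_m=[u_0,\dots,u_m]$: by the Wronskian description this is again an ECT-system on $[a,b]$ (its Wronskians form a subfamily of those of $\F$), with top index $m$, and $\Span(\F_m)\subseteq\Span(\F)$; so a function of $\Span(\F_m)$ realizing the configuration realizes it in $\Span(\F)$ too, and we are reduced to $m=n$.

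Now assume $m=n$. The functions $u_0,\dots,u_n$ are linearly independent — this is part of the notion of an ECT-system (equivalently $W(u_0,\dots,u_n)$ never vanishes) — so $\dim\Span(\F)=n+1$. I would consider the evaluation map
\[
T\colon\Span(\F)\longrightarrow\R^{n},\qquad F\longmapsto\bigl(F^{(j)}(t_k)\bigr)_{1\le k\le r,\ 0\le j\le \nu_k-1},
\]
whose target has dimension $n<n+1$, so $\ker T\neq\{0\}$. Choosing $F\in\ker T\setminus\{0\}$: by construction $F$ vanishes at each $t_k$ to order at least $\nu_k$, hence has at least $\nu_1+\cdots+\nu_r=n$ zeros in $[a,b]$ counted with multiplicity, while by the first part it has at most $n$. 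Therefore it has exactly $n$, which forces its order at $t_k$ to be exactly $\nu_k$ and leaves no room for any further zero in $[a,b]$; thus $F$ realizes the prescribed configuration. This proves the $m=n$ case, and with the reduction the general case.

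The only non-routine ingredient is the reduction, i.e. that a truncation $[u_0,\dots,u_m]$ of an ECT-system is again an ECT-system, and I expect this to be the main obstacle: it is exactly what blocks the tempting shortcut of padding an $m$-point configuration with $n-m$ extra prescribed zeros inside $[a,b]$ and invoking the $m=n$ case, since such padding zeros would persist and be counted. I would simply invoke the structural description of ECT-systems from \cite{KASTU1966} (non-vanishing Wronskians, equivalently the P\'olya factorization / a disconjugate differential operator) rather than reprove it. If one wanted to avoid even that, $m<n$ can be handled directly by a deformation inside $\ker T$: pick $F_0\in\ker T\setminus\{0\}$ with the fewest zeros in $[a,b]$; if that minimum exceeded $m$, then, moving along a segment $F_0+\lambda G$ of $\ker T$ with $G$ chosen not to vanish at a spurious zero of $F_0$, one can push that spurious zero out through an endpoint of $[a,b]$ and lower the count, contradicting minimality — making this ``pushing out'' rigorous for an interior simple spurious zero is the delicate point of that alternative.
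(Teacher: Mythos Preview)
Your argument is sound. The paper does not prove this statement at all: it is quoted as a classical result from \cite{KASTU1966} and used as a black box throughout Section~\ref{sec:chebyshev}, so there is no ``paper's own proof'' to compare against. Your linear-algebra argument for the case $m=n$ (evaluation map into $\R^n$, nontrivial kernel, then the upper bound forces exactness of the configuration) is the standard one, and your reduction to that case by truncating to $[u_0,\dots,u_m]$ is correct provided one reads ``ECT-system'' in the Karlin--Studden sense (all initial Wronskians nonvanishing), which is how the paper uses it in practice even though its one-line definition literally says only $\mathcal{Z}(\F)=n$. You are right to flag that the truncation step is where the ``C'' in ECT does real work, and that the padding shortcut fails; the deformation alternative you sketch at the end is unnecessary here and, as you note, would need more care to make rigorous.
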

The next results, proved in \cite{NOTOR2017} and \cite{AndCesCruNov2021}, extend the results of Theorem \ref{t1} for  ordered sets of smooth functions  when some
of the Wronskians vanish

\begin{theorem}[\cite{NOTOR2017}]\label{t2}
	Let $\F=[u_0, u_1, \ldots u_n]$ be an ordered set of functions on $[a, b]$. Assume that all the Wronskians $W_i(x)$, $i\in\{0, \ldots, n-1\}$, are nonvanishing except $W_n(x)$, which has exactly one zero on $(a, b)$ and this zero is simple. Then, the number of isolated zeros for every element of $Span(\F)$ does not exceed $n+1$. Moreover, for any configuration of $m \leq n+1$ zeros there exists $F\in Span(\F)$ realizing it.
\end{theorem}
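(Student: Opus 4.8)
The plan is to prove the upper bound by the classical derivation–division argument together with Rolle's theorem, and to obtain the realization statement by splitting into the range $m\le n-1$, handled by the ECT–subsystem $[u_0,\dots,u_{n-1}]$ and Theorem~\ref{t1}, and the range $m\in\{n,n+1\}$, handled by reversing the reduction (equivalently, by an explicit construction built around the simple zero of $W_n$).

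\emph{Upper bound.} Let $f=\sum_{i=0}^{n}a_iu_i\in\mathrm{Span}(\F)$ be nonzero and write $Z(\phi)$ for the number of isolated zeros of $\phi$ on $[a,b]$, counted with multiplicity. If $a_n=0$ then $f\in\mathrm{Span}([u_0,\dots,u_{n-1}])$; since $W_0,\dots,W_{n-1}$ are nonvanishing, this ordered set is an ECT–system, so $Z(f)\le n-1$ by Theorem~\ref{t1}. Assume now $a_n\neq 0$. Since $W_0=u_0$ never vanishes, put $g_i=(u_i/u_0)'$ for $i=1,\dots,n$ and $\mathcal G=[g_1,\dots,g_n]$. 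Using the identity $W(g_1,\dots,g_k)=W(u_0,\dots,u_k)/u_0^{\,k+1}$ for $k=1,\dots,n$, the first $n-1$ Wronskians of $\mathcal G$ are nonvanishing, while the last one, being $W_n$ up to a nonvanishing factor, has exactly one zero on $(a,b)$, still simple. Rolle's theorem with multiplicities gives $Z(\psi)\le Z(\psi')+1$ for $\psi$ of class $\mathcal{C}^1$; applied to $\psi=f/u_0$ it yields $Z(f)=Z(f/u_0)\le Z\big((f/u_0)'\big)+1$, with $(f/u_0)'=\sum_{i=1}^{n}a_ig_i\in\mathrm{Span}(\mathcal G)$. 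Iterating this reduction $n$ times — at the $j$-th step dividing by the current leading function, which equals $W_{j-1}$ up to a nonvanishing factor and is therefore nonvanishing for $j\le n$ — the leading coefficient $a_n$ survives every step, and after $n$ reductions one is left with a single function equal to $a_n$ times a function which, being $W_n$ up to a nonvanishing factor, has exactly one zero, and it is simple. Hence $Z(f)\le n+1$.

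\emph{Realization.} Fix a configuration $\mathcal C$ of $m\le n+1$ zeros (a multiset of points of $[a,b]$ of total multiplicity $m$). If $m\le n-1$, the ordered set $[u_0,\dots,u_{n-1}]$ is an ECT–system, so Theorem~\ref{t1} produces $F\in\mathrm{Span}([u_0,\dots,u_{n-1}])\subset\mathrm{Span}(\F)$ realizing $\mathcal C$. For $m\in\{n,n+1\}$ the idea is to reverse the reduction above: the derived system $\mathcal G$ satisfies the hypotheses of the theorem with $n$ replaced by $n-1$, so by induction one can realize inside $\mathrm{Span}(\mathcal G)$ an appropriate configuration $\mathcal C'$ of $m-1\le n$ zeros; one then sets $F=u_0\,\Psi$ with $\Psi'=G$ for $G\in\mathrm{Span}(\mathcal G)$ realizing $\mathcal C'$, choosing the integration constant in $\Psi=\int G$ and the node positions of $\mathcal C'$ so that $\Psi$ acquires exactly the prescribed zeros of $\mathcal C$ and no others; since $u_0$ is nonvanishing, $F$ then has precisely the configuration $\mathcal C$. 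The presence of a single zero, and its simplicity, in the bottom function is exactly what lets the achievable number of zeros climb to $n+1$ and stops it there.

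\emph{Main obstacle.} The delicate point is the realization for $m\in\{n,n+1\}$: the ``inverse Rolle / integrate–and–multiply'' step must be performed so that \emph{exactly} the desired zeros appear, which forces a careful, interdependent choice of the intermediate configurations $\mathcal C'$ and of the integration constants at every stage, and one must rule out spurious zeros and, for configurations touching the ends, control the behaviour at $a$ and $b$. An alternative worth attempting is local: near the simple zero $x_0$ of $W_n$, start from a function realized by the ECT–subsystem and perturb it within $\mathrm{Span}(\F)$ by a small multiple of a function ``concentrated'' near $x_0$, so as to create one extra transversal crossing of the $x$–axis without disturbing the remaining zeros; quantifying this perturbation so that it neither destroys an existing zero nor produces two new ones is the crux.
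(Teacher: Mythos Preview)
The paper does not prove this theorem: it is quoted verbatim from \cite{NOTOR2017} and used as a black box, so there is no in-paper proof to compare against.

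On the substance of your attempt: the upper bound via the derivation--division ladder and Rolle's theorem is correct and is exactly the mechanism used in \cite{NOTOR2017}. The realization part, however, is not a proof but a sketch with an acknowledged hole. Your inductive ``inverse Rolle'' step does not work as written: if $G\in\mathrm{Span}(\mathcal G)$ realizes a configuration $\mathcal C'$ of $m-1$ zeros, those zeros become the \emph{critical points} of $\Psi=c_0+\int G$, not its zeros. A single free constant $c_0$ lets you force $\Psi$ to vanish at one prescribed point, but it gives you no mechanism to place the remaining $m-1$ zeros of $\Psi$ at prescribed locations with prescribed multiplicities, nor to exclude extraneous zeros. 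Saying one also adjusts ``the node positions of $\mathcal C'$'' does not help: the map from (configuration of critical points, integration constant) to (configuration of zeros of $\Psi$) is neither injective nor surjective in any usable sense, and you have not set up any implicit-function or transversality argument to invert it.

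The argument in \cite{NOTOR2017} for the realization is closer to your ``alternative'': one first uses the ECT--subsystem $[u_0,\dots,u_{n-1}]$ to build a function with $n-1$ prescribed zeros, and then exploits the \emph{simplicity} of the zero $x_0$ of $W_n$ to show that adding a suitable multiple of $u_n$ produces, in a controlled way, one or two additional zeros (the simplicity of the zero of $W_n$ is precisely what guarantees the relevant transversality and bounds the gain at two). Your last paragraph gestures at this but stops short of carrying it out; as it stands, the realization half of the theorem remains unproved in your write-up.
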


\begin{theorem}[\cite{AndCesCruNov2021}]\label{t3} 
 Let  $\mathcal{F}=\left[u_{0}, u_{1}, \ldots, u_{n}\right]$   be an ordered set of analytic functions in $[a, b].$  Assume that, for each $i\in\{0, \ldots, n\},$ the Wronskian  $W_{i}$ has $\nu_{i}$ zeros counting multiplicity. Then, the number of simple zeros for every element  of  $\operatorname{Span}(\mathcal{F})$  does not exceed
\begin{equation}\label{upper}
n+\nu_{n}+\nu_{n-1}+2\left(\nu_{n-2}+\cdots+\nu_{0}\right)+\mu_{n-1}+\cdots+\mu_{3},
\end{equation}
where $\mu_i=\min(2 \nu_i, \nu_{i-3}+\ldots+\nu_0)$, for $i\in\{3, \ldots, n-1\}.$
\end{theorem}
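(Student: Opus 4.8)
The plan is to establish the bound by running the classical \emph{derivation--division algorithm} for Chebyshev systems while carefully bookkeeping the zeros of the successive Wronskians. Fix $f\in\Span(\F)$, say $f=\sum_{i=0}^{n}c_iu_i$; since the $u_i$ are analytic and (one may assume) linearly independent, none of the $W_i$ vanishes identically, so each $\nu_i$ is finite. On $[a,b]\setminus Z(u_0)$ put $v_i=u_i/u_0$; then $(f/u_0)'=\sum_{i=1}^{n}c_iv_i'\in\Span(\F^{(1)})$, where $\F^{(1)}=[v_1',\dots,v_n']$ is an ordered set of $n$ analytic functions, and the classical Wronskian identity
\[
W(u_0,u_1,\dots,u_{j+1})=u_0^{\,j+2}\,W(v_1',\dots,v_{j+1}')
\]
shows that the $j$-th Wronskian of $\F^{(1)}$ is $W_{j+1}(\F)/u_0^{\,j+2}$, hence, away from $Z(u_0)$, it has zero-count $\nu_j(\F^{(1)})\le\nu_{j+1}(\F)$. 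Iterating produces a chain $\F=\F^{(0)},\F^{(1)},\dots,\F^{(n)}$ in which $\F^{(k)}$ has $n+1-k$ functions, its leading function $g_k$ is (on the relevant subintervals) proportional, through a nowhere-zero factor assembled from powers of $W_0,\dots,W_{k-1}$, to $W_k(\F)$, and every element of $\Span(\F^{(k)})$ arises from an element of $\Span(\F)$ by $k$ successive quotient-and-differentiate operations.

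The engine is a Rolle-type estimate at each stage. If $g_k$ has $\nu_k$ zeros counting multiplicity on $(a,b)$, then $[a,b]\setminus Z(g_k)$ is a union of at most $\nu_k+1$ open intervals; on each of them $g_k$ is sign-definite, so $h$ and $h/g_k$ share the same zeros there and $Z(h/g_k)\le Z((h/g_k)')+1$ by Rolle; finally the (at most $\nu_k$ distinct) points of $Z(g_k)$ are added back as possible zeros of $h$. Thus stage $k$ contributes the usual overhead $1$ plus a penalty bounded by $2\nu_k$. Summing over the $n$ division stages and adding, at the terminal stage, the at most $\nu_n$ zeros of the single surviving function (proportional to $W_n(\F)$) gives a first, crude bound of the shape $n+\nu_n+2(\nu_{n-1}+\dots+\nu_0)$.

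Two refinements then upgrade this to \eqref{upper}. First, at the last division stage the quotient already lives in a one-dimensional span, so the penalty there improves from $2\nu_{n-1}$ to $\nu_{n-1}$; this accounts for the asymmetric coefficients of $\nu_n,\nu_{n-1}$ versus the lower $\nu_i$. Second --- the delicate point --- the intervals created at stage $k$ are \emph{nested} inside those created at stages $0,\dots,k-1$, so a zero of $W_k$ that lies at an endpoint already produced earlier costs nothing extra; quantifying this cancellation shows the penalty genuinely charged at level $i$ is at most $\min\bigl(2\nu_i,\ \nu_{i-3}+\dots+\nu_0\bigr)=\mu_i$. Assembling the per-stage contributions yields $n+\nu_n+\nu_{n-1}+2(\nu_{n-2}+\dots+\nu_0)+\sum_{i=3}^{n-1}\mu_i$; since the quotient-and-Rolle steps control only simple zeros once a Wronskian vanishes, the bound is stated for simple zeros. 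Sharpness (the realizability of every admissible configuration) would be obtained, as in Theorems \ref{t1} and \ref{t2}, by choosing the $c_i$ to prescribe zeros on each piece.

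The hard part is this second refinement: making precise how the zeros of the different Wronskians interact through the nested intervals of the derivation--division process, so as to replace the crude $2\nu_i$ by $\mu_i$ and to justify the exact offset in $\nu_{i-3}+\dots+\nu_0$. This calls for an induction on $n$ whose hypothesis records not merely the number of functions and the Wronskian zero-counts but also the positions of the interval endpoints inherited from earlier stages, together with a careful treatment of zeros of $f$ and of the $g_k$ occurring at the endpoints $a,b$ or at points common to several of the sets $Z(W_k)$.
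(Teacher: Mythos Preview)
This theorem is not proved in the present paper at all: it is quoted verbatim from \cite{AndCesCruNov2021} and then applied as a black box (together with Theorems~\ref{t1} and~\ref{t2}) in the proofs of Propositions~\ref{prop5}, \ref{prop6} and~\ref{prop8}. There is therefore no ``paper's own proof'' to compare your attempt against.

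As for the attempt itself, the derivation--division scheme and the Rolle count you describe are indeed the backbone of all results of this type, and the Wronskian identity $W(u_0,\dots,u_{j+1})=u_0^{\,j+2}W(v_1',\dots,v_{j+1}')$ is the right engine. Two points, however, are not right as stated. First, the ``crude bound'' $n+\nu_n+2(\nu_{n-1}+\dots+\nu_0)$ does not follow from your per-stage estimate: at stage $k$ the domain $D_k$ has up to $1+\nu_0+\dots+\nu_{k-1}$ connected components, and Rolle contributes a ``$+1$'' on \emph{each} of them, not a single ``$+1$'' for the whole stage; so the naive telescoping overshoots by a term that grows with the accumulated component count. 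Second, your narrative that the $\mu_i$ arise as \emph{savings} from ``cancellation'' is inverted. In \eqref{upper} the $\mu_i$ are \emph{added} to $n+\nu_n+\nu_{n-1}+2(\nu_{n-2}+\dots+\nu_0)$; they are the residual cost of that component accumulation, bounded in two independent ways (by $2\nu_i$, the per-stage splitting, and by $\nu_0+\dots+\nu_{i-3}$, the total number of earlier cut points that can be ``felt'' at level $i$), and one keeps the minimum. So the two refinements are not both improvements of a valid crude bound; the second one is precisely the correction that makes the bound valid once components are tracked honestly.

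You correctly identify in your last paragraph that this bookkeeping is the real work and that a genuine induction, recording the positions of the inherited interval endpoints, is required. As written, the proposal is a plan that locates the difficulty but does not resolve it, and the sign/role of the $\mu_i$ needs to be revisited before that induction can even be set up.
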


\subsection{New families of ET-systems with accuracy} We define on $[a, b]$, with $a, b>0$, the ordered set of functions 

\begin{eqnarray}
	\G_0^k &=& [u_{1}^k], \nonumber \\
	\G_1^k &=& [u_0^k,u_{1}^k], \nonumber \\
	\G_2^k &=& [u_{8}^k,u_{11}^k,u_{1}^k], \nonumber \\
	\G_3^k &=& [u_0^k,u_2^k,u_{8}^k], \nonumber \\
	\G_4^k &=& [u_{11}^k,u_{8}^k, u_1^k,u_2^k], \nonumber \\
	\G_5^k &=& [u_2^k,u_5^k,u_{4}^k, u_0^k,u_1^k,u_6^k,u_7^k,u_{9}^k], \nonumber \\
 \G_{6}^k&=&[u_{6}^k,u_{0}^k,u_{1}^k,u_{10}^k,u_{12}^k] \nonumber\\
\G_7^k&=&[u_{5}^k, u_{6}^k,u_{3}^k,u_{1}^k,u_{10}^k,u_{12}^k] \nonumber\\
\G_8^k&=&[u_{1}^{k}, u_{6}^{k}, u_{10}^{k}, u_{12}^{k}, u_{3}^{k}] \nonumber\\
	\mathcal{G}_{9}^{k} &=& [u_0^{k}, u_{1}^{k}, u_{6}^{k}, u_{5}^{k}, u_{3}^{k}, u_{10}^{k}, u_{12}^{k}], \nonumber \\
	\G_{10}^k &=& [u_{0}^k,u_{1}^k,u_{13}^k], \nonumber \\
	\mathcal{G}_{11}^{k} &=& [u_1^{k}, u_{3}^{k}], \nonumber 
\end{eqnarray}
with $k\in \R$ where
\[
\begin{array}{ll}
u_{0}^k(x)=1, &  u_1^k(x)=x^k,\\
u_2^k(x)= x^{k-1},& u_3^k(x)=x^{k-2},\\
u_4^k(x)= x^{2k-1},&  u_5^k(x)= x^{2k-2},\\
u_6^k(x)=x^{3k-2},&u_7^k(x)=x^{3k-3},\\
u_8^k(x)= x+x^{2k-1},&u_{9}^k(x)= x+kx^{4k-3},\\
u_{10}^{k}(x)= \left(1+x^{2k-2}\right) \left( x+kx^{2k-1}\right),& u_{11}^k(x)= \left(x+x^{2k-1}\right) \tan ^{-1}\left(x^{k-1}\right), \\
u_{12}^{k}(x)=\left(1+x^{2k-2}\right) \left( x+kx^{2k-1}\right) \tan ^{-1}\left(x^{k-1}\right) ,&u_{13}^k(x)=x^{2k}. \end{array}
\]
 In the following, we denote by $\sgn(a_1, a_2, \ldots, a_n)$ the vector   $(\sgn(a_1), \sgn(a_2), \ldots, \sgn(a_n))$  for $(a_1, a_2, \ldots, a_n)\in \R^n$.
\begin{proposition} \label{prop5}The following statements hold
\begin{enumerate}
\item $\z(\G_0^k)=0$,  for $k>0$; 
\item $\z(\G_1^k)=1$, for $k>0$;
\item  $\z(\G_2^k)=2$, for $k\ne1$;
\item $\z(\G_3^k)=2$, for $k\in(1,2]$;
\item  $\z(\G_3^k)=3$, for $k\in(2,\infty)$;
\item $\z(\G_4^k)=3$, for $k\in  \left(1,\frac{3}{2} \right] \cup [2, \infty)$;
\item  $\z(\G
_4^k)=4$, for $k\in  \left(0,\frac{1}{2} \right) \cup \left(\frac{3}{2}, 2 \right)$;
\item $\z(\G_5^k)=7$, for $k \in \left[\frac{4}{3},\frac{3}{2} \right) \cup (2, \infty)$;
\item  $\z(\G_5^k)=8$, for  $k\in \left(1,\frac{4}{3} \right) \cup \left(\frac{3}{2},2 \right)$;
\item  $\z(\G_6^k)=4$, for  $k=2$;
\item  $\z(\G_7^k)=5$, for  $k=\frac{2}{3}$;
\item $\z(\G_{10}^k)=2$, for $k>0$;
\item $\z(\G_{11}^k)=1$, for $k=1$.
\end{enumerate}
\end{proposition}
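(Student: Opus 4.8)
The plan is to deduce every item from the Chebyshev criteria of Section~\ref{sec:chebyshev}, after an explicit analysis of the Wronskians of each ordered set on an arbitrary interval $[a,b]\subset(0,\infty)$. The starting observation is that all the functions $u_j^k$ share a common shape: each is $x^{\al}$ times a polynomial in $x^{k-1}$, possibly multiplied by $\tan^{-1}(x^{k-1})$; since $\tfrac{d}{dx}\tan^{-1}(x^{k-1})=(k-1)x^{k-2}/(1+x^{2k-2})$, differentiation never leaves this class. Hence, after substituting $t=x^{k-1}$ (a monotone bijection of $(0,\infty)$ onto itself for $k\neq1$) and factoring out the obvious positive monomial and $(1+t^2)$-power common factors, each Wronskian $W_j$ built from a $\tan^{-1}$-free subset becomes a positive factor times an honest polynomial $P_j(t)$, whereas a Wronskian of a subset containing $u_{11}$ or $u_{12}$ becomes a combination $R_j(t)+S_j(t)\tan^{-1}(t)$ with $R_j,S_j$ polynomials. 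Counting the zeros of $W_j$ on $(0,\infty)$ then reduces to counting the positive roots of $P_j$ --- via Descartes' rule of signs, Sturm sequences, and, at the boundary values of $k$, discriminant computations --- while for the $\tan^{-1}$-combinations one differentiates once (which removes the $\tan^{-1}$ at the cost of a factor $1/(1+t^2)$) and applies Rolle's theorem to reduce again to polynomial root counts.

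With the number $\nu_j$ of zeros of each $W_j$ in hand, the thirteen statements split into two groups. For $\G_0^k$, $\G_1^k$, $\G_{10}^k$ and $\G_{11}^k$ at $k=1$, all Wronskians are nonvanishing on $(0,\infty)$ --- up to positive factors these are Vandermonde-type determinants in the distinct exponents --- so they are ECT-systems and Theorem~\ref{t1} gives $\z=0,1,2,1$ respectively, together with realizability. The same holds, with every Wronskian (including the last) nonvanishing, for $\G_2^k$ with $k\neq1$, for $\G_3^k$ with $k\in(1,2]$, for $\G_4^k$ with $k\in(1,\tfrac32]\cup[2,\infty)$, for $\G_5^k$ with $k\in[\tfrac43,\tfrac32)\cup(2,\infty)$, and for $\G_6^k$ at $k=2$ and $\G_7^k$ at $k=\tfrac23$; Theorem~\ref{t1} then yields the ECT values $2,2,3,7,4,5$. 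The remaining items are the accuracy-one cases --- $\G_3^k$ with $k>2$, $\G_4^k$ with $k\in(0,\tfrac12)\cup(\tfrac32,2)$, and $\G_5^k$ with $k\in(1,\tfrac43)\cup(\tfrac32,2)$ --- where the polynomial attached to the top Wronskian $W_n$ acquires exactly one simple positive root while all lower $W_j$ remain nonvanishing; Theorem~\ref{t2} then gives $\z=n+1$, namely $3$, $4$ and $8$, together with realizability of the maximal configuration. If a lower Wronskian happened to vanish in some sub-case, Theorem~\ref{t3} would still cap $\z$ via \eqref{upper}, and the matching lower bound would be supplied by an explicit element of $\Span(\G_i^k)$ whose associated generalized polynomial in $t$ has all roots real, positive and simple (built by a Rolle/interlacing argument, or by a small perturbation of a function with a single interior root of multiplicity $n+1$).

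I expect the main obstacle to be this first step applied to the sets containing $u_{10},u_{11},u_{12}$: those Wronskians are sizeable, and proving that the one-variable polynomials $P_j(t)$, or the combinations $R_j(t)+S_j(t)\tan^{-1}t$, have exactly the asserted number of positive roots --- uniformly on each $k$-interval, with the count jumping precisely at the critical values $k\in\{1,\tfrac43,\tfrac32,2\}$ --- is the delicate point; the sign bookkeeping there is unavoidable, and it is exactly this mechanism that produces the algebraic thresholds for $m/n$ appearing in Theorems~\ref{Theorem-Melnikov2}, \ref{Theorem-Melnikov21} and~\ref{Theorem-Melnikov22}. A secondary care point is to carry the substitution $t=x^{k-1}$ and the factored-out powers of $x$ and $1+x^{2k-2}$ consistently through the range $k<1$, where several exponents become negative, so that no spurious zeros or poles are introduced on $[a,b]$.
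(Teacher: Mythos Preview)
Your plan is correct and is exactly the route the paper takes: compute the successive Wronskians of each ordered family, reduce their sign to that of an explicit one-variable polynomial in $t=x^{k-1}$ (or $x^{2k-2}$), and invoke Theorem~\ref{t1} when all $W_j$ are nonvanishing and Theorem~\ref{t2} when only the top $W_n$ has a single simple positive root. One simplification you did not anticipate: because $u_{11}^k=u_8^k\cdot\tan^{-1}(x^{k-1})$ and $u_{12}^k=u_{10}^k\cdot\tan^{-1}(x^{k-1})$, and in every family of Proposition~\ref{prop5} the factor $u_8^k$ (resp.\ $u_{10}^k$) is also present, the $\tan^{-1}$ term cancels algebraically in all the Wronskians beyond $W_0$ (via the identity $W(f,fg)=f^2g'$ and its higher analogues), so your Rolle step for $R_j+S_j\tan^{-1}t$ --- which as stated would need a preliminary division by $S_j$, since differentiation alone does not kill the $\tan^{-1}$ --- is never actually needed here.
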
 
\begin{proof}

The proof of  statements \textit{(1), (2), (12), (13) } are immediate and we omit it. To prove the statements \textit{(3)–(11)} we will show that for each family $\G\in \{\G_2^k,\G_3^k, \G_4^k, \G_5^k, \G_6^k, \G_7^k \}$ all its Wronskians are nonvanishing except the last, which has no zeros or one zero,  which is simple. Therefore, by Theorems \ref{t1} and \ref{t2} we will get that $\G$ is an ECT-System ou an ET-System with accuracy one and consequently $\z(\G)=n$ or $\z(\G)=n+1$, where $n$ is the cardinal of $\G.$\\
\bigskip

The Wronskians of the family $\G_2^k$ are given by
\begin{eqnarray}
	W_0(x) &=& x+x^{2k-1}, \nonumber \\
	W_1(x) &=& (k -1) x^{k-2 } \left(x^{2 }+x^{2k}\right), \nonumber \\
	W_2(x) &=& -\frac{4 (k -1)^3 x^{4k-2}}{x^{2}+x^{2k}}. \nonumber
\end{eqnarray}
Clearly $W_0(x), W_1(x), W_2(x)\neq 0$ in $(0, \infty)$ for $k>0.$  

\bigskip
The Wronskians of the family $\G_3^k$ are given by
	\[
	\begin{array}{rcl}
	W_0(x) &=& 1, \nonumber \\
          W_1(x) &=& (k-1) x^{k-2}, \nonumber \\
          W_2(x) &=& (k-1) x^{k-3} P_{0,k}(x^{2k-2}), \nonumber
	\end{array}
	\]	
with $P_{0,k}(x)= (k  (2k-1))x-(k-2)$. For $k>1$,  $W_0(x), W_1(x)$ do not vanish in $(0, \infty)$. Notice that $P_{0,k}(0)P'_{0,k}(0)>1$ if $k \in (1,2]$ and   $P_{0,k}(0)P'_{0,k}(0)<1$ if $k \in (2,\infty)$. Thus, $W_2(x)\neq 0$ in $(0, \infty)$ for $k \in (1,2]$ and  $W_2(x)$  has exactly one positive zero, which is simple, for $k \in(2, \infty)$.\\
\bigskip

The Wronskians of the family $\G_4^k$ are given by
	\[
	\begin{array}{rcl}
	W_0(x) &=& \left(x+ x^{2k-1}\right) \tan ^{-1}\left(x^{k-1}\right), \nonumber \\
W_1(x) &=& -(k-1) x^{k-2} \left(x^{2k}+x^2\right), \nonumber \\
W_2(x) &=& \dfrac{4 (k -1)^3 x^{4k-2}}{x^{2k}+x^2}, \nonumber \\
W_3(x) &=& \dfrac{4 (k -1)^3 x^{5k-4}}{\left(x^{2k }+x^2\right)^2}P_{1,k}(x^{2k-2}), \nonumber
	\end{array}
	\]	
with $P_{1,k}(x)= ( k (2 k -3))x+( (k -2) (2k -1))$. It is easy to see that $W_0(x), W_1(x), W_2(x) \neq 0$ for $x>0.$ Further, $||(P_{1,k}(0), P'_{1,k}(0))||\neq 0$,  $P_{1,k}(0)P'_{1,k}(0)\geq 0$ for $k \in \left(1,\frac{3}{2} \right] \cup [2, \infty)$ and   $P_{1,k}(0)P'_{1,k}(0)< 0$ for $k \in \left(\frac{3}{2}, 2 \right)$. Hence, $W_3(x)\neq0$  in $(0, \infty)$ for $k \in \left(1,\frac{3}{2} \right] \cup [2, \infty)$ and  $W_3(x)$  has exactly one positive zero, which is simple, if $k \in \left(\frac{3}{2}, 2 \right)$.\\
\bigskip

The Wronskians of the family $\G_5^k$ are given by
\begin{eqnarray}
W_0(x) &=& x^{k-1}, \nonumber \\
W_1(x) &=& (k-1) x^{3k-4}, \nonumber \\
W_2(x) &=& (k-1)k  x^{5k-7}, \nonumber \\
W_3(x) &=& -2 (k-1)^3 k (2k -1) x^{5k-10}, \nonumber \\
W_4(x) &=& -2 (k-2) (k-1)^4 k^2 (2k-1) x^{6k-14}, \nonumber \\
W_5(x) &=& -4 (1-2k)^2 (k-2) (k-1)^6 k^3 (3k-2) x^{9k-21}, \nonumber \\
W_6(x) &=& 24 (1-2k)^2 (k-2)^2 (k-1)^9 k^3 (2k-3)(3k-2) x^{12k-30}, \nonumber \\
W_7(x) &=& 144 (1-2k)^2 (k-2)^2 (k-1)^{12} k^3 (2k-3)(3k-2) x^{12k-36} P_{2,k}(x^{4k-4}), \nonumber
\end{eqnarray}
with $P_{2,k}(x)=(k^2 (2k-1) (3k-2) (4k-3))x+( (k-2) (2k-3) (3k-4))$.
For $k\in(1,\infty) \setminus\{2\}$ and $i\in\{0,1,2,3,4,5,6\}$, $W_i(x) \neq 0$ in $(0, \infty)$. Analogously to the previous case, we conclude that  $W_7(x)\neq(0, \infty)$ for $k \in [\frac{4}{3},\frac{3}{2}) \cup (2, \infty)$ and  $W_7(x)$  has exactly one positive zero, which is simple, for  $k\in(1,\frac{4}{3}) \cup (\frac{3}{2},2)$.
\bigskip

The Wronskians of  $\G_6^k$, for $k=2$, are given by
\begin{eqnarray}
W_0(x) &=& x^4, \nonumber \\
W_1(x) &=& -4 x^3, \nonumber \\
W_2(x) &=& 16 x^3, \nonumber \\
W_3(x) &=& 48 \left(10 x^5-3 x^3+x\right),\nonumber \\
W_4(x) &=& \frac{1536 x^3 \left(2 x^2+9\right)}{\left(x^2+1\right)^3}. \nonumber 
\end{eqnarray}
By straightforward computations, we obtain that $W_i(x) \neq 0$, $i \in \{0,1,2,3,4\}$.\\
\bigskip

The Wronskians of  $\G_7^k$, for $k=\frac{2}{3}$, are given by
\begin{eqnarray}
W_0(x) &=& \frac{1}{x^{2/3}}, \nonumber \\
W_1(x) &=& \frac{2}{3 x^{5/3}}, \nonumber \\
W_2(x) &=& \frac{16}{27 x^5}, \nonumber \\
W_3(x) &=& \frac{256}{243 x^{22/3}},\nonumber \\
W_4(x) &=& \frac{256 \left(-25 x^{2/3}+105 x^{4/3}+6\right)}{19683 x^{35/3}}, \nonumber \\
W_5(x) &=& -\frac{65536 \left(9 x^{2/3}+26\right)}{4782969 \left(x^{2/3}+1\right)^4 x^{46/3}}. \nonumber
\end{eqnarray}
Notice that $W_i (x)\neq 0$, $i \in \{0,1,2,3,4,5\}$.

\end{proof}

\begin{proposition} \label{prop6}The following statements hold.
\begin{enumerate}
\item $\z(\G_8^k)=4$, for $k\in(1, 2] \cup [3,5]$;
\item$\z(\G_8^k)=5$, for  $k\in(2,3)\cup (5, \infty).$
\end{enumerate}
\end{proposition}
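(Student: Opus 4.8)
The plan is to argue exactly as in the proof of Proposition~\ref{prop5}, using that $\z$ depends only on $\operatorname{Span}(\G_8^k)$, so that the generators may be reordered at will, and that $\G_8^k$ has cardinality $n+1=5$, so the only two possibilities are that $\G_8^k$ is an ECT‑system, giving $\z(\G_8^k)=4$, or an ET‑system with accuracy one, giving $\z(\G_8^k)=5$. First I would compute the five consecutive Wronskians $W_0,\dots,W_4$ of $\G_8^k=[u_1^k,u_6^k,u_{10}^k,u_{12}^k,u_3^k]$ on $[a,b]\subset(0,\infty)$. Since $u_{12}^k=u_{10}^k\arctan(x^{k-1})$ and $u_{10}^k$ is the generator immediately preceding $u_{12}^k$ in the list, in the determinants $W_3$ and $W_4$ the (pointwise determinant‑preserving) column operation subtracting $\arctan(x^{k-1})$ times the $u_{10}^k$‑column from the $u_{12}^k$‑column eliminates the transcendental term, because $\frac{d}{dx}\arctan(x^{k-1})=(k-1)x^{k-2}/(1+x^{2k-2})$ is rational; hence each $W_i$ becomes, up to a power of $x$ and a power of $1+x^{2k-2}$, a polynomial in $x^{2k-2}$ with coefficients rational in $k$.

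Next I would check that $W_0,W_1,W_2,W_3$ do not vanish on the interval for the $k$ appearing in the statement, all of which lie in $(1,\infty)$. Here $W_0=x^k$ and $W_1=2(k-1)x^{4k-3}$ are monomials, nonzero for $k>1$; $W_2$ equals $2(k-1)^3x^{4k-4}$ times the quadratic $3k\,t^2-(k+1)\,t+3$ in $t=x^{2k-2}$, which is positive for all $t>0$ as long as its discriminant $k^2-34k+1$ is negative; and $W_3$, after clearing the $\arctan$, is a monomial times a power of $1+x^{2k-2}$ times a low‑degree polynomial in $t$ which one verifies keeps a constant sign on the relevant range of $t$. Finally, as happens for $\G_3^k,\G_4^k,\G_5^k$ in Proposition~\ref{prop5}, the polynomial factor of $W_4$ is expected to be \emph{affine}, $A(k)\,t+B(k)$; then $W_4$ has no positive zero when $A(k)$ and $B(k)$ have the same sign (and are not both zero), and exactly one positive zero, necessarily simple, when $A(k)B(k)<0$. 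Factoring $A(k),B(k)$ should show that $A(k)B(k)$ changes sign precisely at $k=2,3,5$, being $\ge 0$ on $(1,2]\cup[3,5]$ and $<0$ on $(2,3)\cup(5,\infty)$. Combining this with Theorems~\ref{t1} and \ref{t2} yields $\z(\G_8^k)=4$ for $k\in(1,2]\cup[3,5]$ and $\z(\G_8^k)=5$ for $k\in(2,3)\cup(5,\infty)$; if some intermediate Wronskian happens to vanish on $[a,b]$ for certain $k$ in these ranges (for instance $W_2$ for very large $k$), one invokes Theorem~\ref{t3} for the upper bound and constructs an explicit element of $\operatorname{Span}(\G_8^k)$ with five simple zeros for the matching lower bound.

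The hard part will be the explicit computation and simplification of the $5\times5$ determinant $W_4$, whose entries carry $k$‑dependent exponents, and the bookkeeping needed to pin down the signs of $A(k)$ and $B(k)$ on each sub‑interval of $k$; one must also confirm that the polynomial factor of $W_4$ is of degree exactly one in $t$, that its leading coefficient does not vanish for $k>1$, and treat the isolated values of $k$ at which two exponents coincide (for $k>1$ these occur only at $k=2,3,5$ or outside the claimed ranges; note moreover that the case $k=2$ is already covered by the identity $\operatorname{Span}(\G_8^2)=\operatorname{Span}(\G_6^2)$, which holds because $u_3^2=u_0^2=1$, together with statement~(10) of Proposition~\ref{prop5}).
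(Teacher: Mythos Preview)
Your overall strategy---compute the Wronskians, show the first four are nonvanishing, analyse the last one, and apply Theorems~\ref{t1} and~\ref{t2}---matches the paper's, and your observations about eliminating $\arctan$ via a column operation and about $\Span(\G_8^2)=\Span(\G_6^2)$ are correct. However, two of your computational expectations are wrong and would derail the argument as written.

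First, the polynomial factor of $W_4$ is \emph{quadratic} in $t=x^{2k-2}$, not affine. With the ordering $[u_1^k,u_6^k,u_{10}^k,u_{12}^k,u_3^k]$ one finds
\[
W_4=\dfrac{256(k-1)^6k\,x^{10k-11}}{(x^{2k}+x^2)^3}\,P_{5,k}(t),\qquad P_{5,k}(t)=A_{5,k}t^2+B_{5,k}t+C_{5,k},
\]
with $A_{5,k}=(k-5)(k-2)k(3k-1)$ and $C_{5,k}=-(k-3)(2k-1)(5k-1)$. The zero count on $(0,\infty)$ then comes from a three-coefficient sign analysis (Descartes), not from a single product $A(k)B(k)$; moreover $A_{5,k}$ vanishes at $k=2$ and $k=5$, so the degree drops there rather than a mere sign change occurring.

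Second, and more seriously, for $k>5$ the \emph{intermediate} Wronskian $W_3$ already has a positive zero: its factor is $P_{4,k}(t)=(k-5)k\,t+(1-5k)$, with coefficients of opposite sign once $k>5$. Your fallback of invoking Theorem~\ref{t3} then gives only $\z(\G_8^k)\le 4+\nu_4+\nu_3=6$, which is not tight, and constructing an explicit element with five zeros does not help with the upper bound. The paper's remedy is to \emph{reorder} to $[u_1^k,u_3^k,u_{10}^k,u_{12}^k,u_6^k]$ for $k\ge 5$; with this ordering $W_0,\dots,W_3$ are all nonvanishing (their polynomial factors in $t$ have degrees $0,0,2,4$ with all coefficients of one sign for $k\ge 5$), while $W_4$ retains, up to sign, the same quadratic $P_{5,k}$ and has exactly one simple positive root. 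Theorems~\ref{t1} and~\ref{t2} then apply directly on each sub-range. Your remark about $W_2$ possibly vanishing for very large $k$ is also correct in the original ordering (the discriminant $k^2-34k+1$ becomes positive beyond $k=17+12\sqrt{2}$), but this issue disappears once the reordering is used.
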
 

\begin{proof}

We consider the ordered set
\[
\F_1^k=\left\lbrace 
\begin{array}{ll}
\left[u_{1}^{k}, u_{6}^{k}, u_{10}^{k}, u_{12}^{k}, u_{3}^{k}\right],& k\in (1,5),\\
\left[u_{1}^{k}, u_{3}^{k}, u_{10}^{k}, u_{12}^{k},u_{6}^{k}\right],& k \in  [5,\infty). \\
\end{array}\right. 
\]
Observe that $\z(\G_8^k)=\z(\F_1^k)$.  In the sequel, we will show that for $k \in [1, 5)$  all  Wronskians $\F_1^k$ are nonvanishing except the last, which has no zeros or  exactly one zero, which is simple. Therefore, by Theorems \ref{t1} and \ref{t2} we will get that $\F_1^k$ is an ECT-System or an ET-System with accuracy one and consequently $\z(\G_8^k)=4$ or $\z(\G_8^k)=5$. In order to study  $\z(\F_1^k)$  we consider the following two cases: 
\subsection*{Case 1:} Let $k \in (1,5)$. The Wronskians of $\F^k_1$ are given by  
\begin{eqnarray}
W_0(x) &=& x^{k}, \nonumber \\
W_1(x) &=&2 (k-1) x^{4k-3}, \nonumber \\
W_2(x) &=& 2 (k-1)^3 x^{4k-4} P_{3,k}(x^{2k-2}), \nonumber \\
W_3(x) &=& -\frac{32 (k-1)^6 x^{9k-7}}{\left(x^{2k}+x^2\right)^2} P_{4,k}(x^{2k-2}),\nonumber \\
W_4(x) &=& \frac{256 (k-1)^6k x^{10k-11}}{\left(x^{2k}+x^2\right)^3} P_{5,k}(x^{2k-2}), \nonumber 
\end{eqnarray}
where $P_{3,k}(x)=3 kx^{2} -(k+1) x+3$, $P_{4,k}(x)= (k-5)k x+(1-5k)$ and $$P_{5,k}(x)=A_{5,k} x^{2}+B_{5,k}x+C_{5,k}$$
with
\[
\begin{array}{ll}
A_{5,k}= (k-5) (k-2) k (3k -1), &B_{5,k}=-2 (k (k(k(k+10)-30)+10)+1),\\
 C_{5,k}= -(k-3) (2k-1) (5k-1). \end{array}
\]
From here, it is easy to see that for $k\in (1,5)$, $i \in\{0,1,3\}$, $W_i(x) \neq 0$ in $(0, \infty)$ and $\Dis(P_{3,k})= k^2-34k+1<0$. Thus, $W_2(x)$ has no positive zeros. Moreover,

\begin{equation*}
\sgn(A_{5,k},B_{5,k}, C_{5,k})=\left\{
\begin{array}{ll}
(1,1,1),&  k\in(1,2),  \nonumber \\
(0,1,1),& k=2, \nonumber \\
(-1,\sgn(B_{5,k}),1),& k\in(2,3), \nonumber \\
(-1,-1,0),& k=3,\nonumber \\
(-1,-1,-1),& k\in(3,5). \nonumber 
\end{array}\right.
\end{equation*}

 Therefore, $W_4(x)$ has no zeros for $k\in(1, 2] \cup [3,5)$ and has one positive simple zero for $k\in(2,3)$.
\subsection*{Case 2:} Let $k\in[5, \infty)$. The Wronskians of $\F^k_1$ are given by  
\begin{eqnarray}
W_0(x) &=& x^{k}, \nonumber \\
W_1(x) &=&-2 x^{2 k-3}, \nonumber \\
W_2(x) &=& -2 (k-1) x^{2k-4} P_{6,k}(x^{-2+2k}), \nonumber \\
W_3(x) &=& \frac{8 (k -1)^3 x^{5k-5}}{\left(x^{2k }+x^2\right)^2} P_{7,k}(x^{2k-2}),\nonumber \\
W_4(x) &=& \frac{256 (k-1)^6kx^{10 k-11}}{\left(x^{2k}+x^2\right)^3} P_{8,k}(x^{2k-2}), \nonumber 
\end{eqnarray}
with
\begin{equation*}
\begin{array}{lll}
P_{6,k}(x)&=& A_{6,k} x^{2}+ B_{6,k}x+C_{6,k}, \\
	P_{7,k}(x)&=& A_{7,k} x^{4}+B_{7,k} x^{3}+C_{7,k} x^{2}+D_{7,k} x+E_{7,k}, \\
		P_{8,k}(x)&=&A_{8,k} x^{2}+B_{8,k}x+C_{8,k},
\end{array}
\end{equation*}
and

\[
\begin{array}{ll}
A_{6,k}= 3 k(3 k -1),& B_{6,k}= (k +1)^2,\\
C_{6,k}=k -3,& A_{7,k}=-3k^3 (3k-1),\\
B_{7,k}= -17k^4+8 k ^3+k^2, &  C_{7,k}= -k(k(k(k+34)-67)+20),  \\
D_{7,k}=(4-k(k(k(k+22)-61)+18)),& E_{7,k}=k  (k(7-3k)+6), \\	A_{8,k}= -(k-5) (k-2) k (3k -1),& B_{8,k}=2 (k (k(k(k+10)-30)+10)+1), \\
C_{8,k}= (k-3) (2k-1) (5k-1).
 \end{array}
\]
Notice that for $k\in [5, \infty)$,
$\sgn(A_{6,k})=\sgn(B_{6,k})=\sgn(C_{6,k})=1$ and
\[\sgn(A_{7,k})=\sgn(B_{7,k})=\sgn(C_{7,k})=\sgn(D_{7,k})=\sgn(E_{7,k})=-1.
\]
Thus, $W_i(x)\neq 0$ in $(0, \infty)$ for $k \in[5, \infty)$ and  $i \in\{0,1,2,3\}$.  Since
\begin{equation}
\sgn(A_{8,k}, B_{8,k}, C_{8,k})=\left\{
\begin{array}{ll}
(1,-1,-1),&  k\in(5,\infty),  \nonumber \\
(0,-1,-1),& k=5, \nonumber \\
\end{array}\right.
\end{equation}
we get that $W_4 (x)\neq0$ in $(0, \infty)$ for $k=5$ and $W_4 (x)$  has one positive zero, which is simple, for $k\in(5,\infty).$ 
\bigskip
 This ends the proof of Proposition \ref{prop6}.
\end{proof}

\begin{proposition} \label{prop8}
 If $k \in (0, 5]\setminus \{k_0, k_1, k_2, k_3, k_4, k_5, 2,1, \frac{3}{2}\}$, then $6\leq \z(\G_9^k)\leq 11$. Moreover, the following statements hold
\begin{enumerate} 
\item $\z(\G_9^k)=6$, for $k \in \left(\frac{1}{5}, k_2 \right) \cup \left(\frac{2}{3},\frac{3}{4} \right) \cup \left[\frac{4}{5},1 \right) \cup \left(1,\frac{4}{3} \right] \cup \left(\frac{3}{2}, k_3 \right)$;
\item $\z(\G_9^k)=7$,  for $k \in \left(k_1,\frac{1}{5} \right) \cup \left(\frac{1}{3},\frac{1}{2} \right) \cup \left(\frac{1}{2},\frac{2}{3}\right) \cup \left(\frac{3}{4},\frac{4}{5}\right) \cup \left(\frac{4}{3},\frac{3}{2}\right) \cup (k_5, 5]$;
\item $7\leq \z(\G_9^k)\leq8$ for $k \in \left(k_2,\frac{1}{3} \right) \cup (k_3,2) \cup (2,3)$;
\item $7\leq \z(\G_9^k)\leq9$, for $k \in (k_4,k_5)$,
\end{enumerate}
where \begin{enumerate}[(i)]
	 	\item  $k_1$ is the irrational root in $\left(0, \frac{1}{5} \right)$ of  $$q_1(x)=2 x^6-153 x^5+894 x^4-1625 x^3+1234 x^2-444 x+48;$$
	 	\item $k_2,k_3,k_5$ are the irrational roots in $\left(\frac{1}{5},\frac{1}{3} \right),\left(\frac{3}{2},2 \right),$ and  $(3,4)$, respectively, of  
  		\[
  		\begin{array}{ll}
  		q_2(x)=&177586560 x^{17}-1447424208 x^{16}+5969663136 x^{15}-29387373904 x^{14}\\&+129626832188 x^{13}-293774330511 x^{12}+102470736381 x^{11}+1027573184492 x^{10}\\&-2645532232771 x^9+3259827826136 x^8-2344796073539 x^7+997977148820 x^6\\&-227233713561 x^5+15757275163 x^4+3311923726 x^3-563207524 x^2\\&+11249208 x+19744;
  		\end{array}
  		\]
	 	
	 	\item  $k_4$  is the irrational root in $(3,4)$ of $$q_3(x)=862 x^5-4831 x^4+8308 x^3-5186 x^2+974 x+1.$$
 \end{enumerate}

\end{proposition}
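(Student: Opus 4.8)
The plan is to treat $\G_9^k$ exactly as $\G_2^k,\ldots,\G_8^k$ were treated in Propositions \ref{prop5} and \ref{prop6}: reduce the statement to a count of the positive zeros of the Wronskians of $\G_9^k$ and then invoke Theorems \ref{t1}, \ref{t2} and \ref{t3}. Since $\z$ is invariant under reordering of an ordered set, I would work, on each subinterval of $(0,5]$, with whichever ordering of the seven functions $u_0^k,u_1^k,u_6^k,u_5^k,u_3^k,u_{10}^k,u_{12}^k$ makes the Wronskian chain cleanest (this is the same device used to pass from $\G_8^k$ to $\F_1^k$ in Proposition \ref{prop6}).

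\textbf{Step 1: the Wronskians.} I would compute the seven Wronskians $W_0,\ldots,W_6$ of (a suitable ordering of) $\G_9^k$. Using that $\tfrac{d}{dx}\tan^{-1}(x^{k-1})=(k-1)x^{k-2}/(1+x^{2k-2})$, the arctangent contributed by $u_{12}^k$ cancels after the appropriate column reduction, and every $u_i^k$ in the list is a finite sum of powers of $x$ with exponents linear in $k$; hence each $W_i$ has the form $c_i(k)\,x^{e_i(k)}\,(1+x^{2k-2})^{-m_i}\,P_{i,k}(x^{2k-2})$, where $c_i(k)$ is an explicit product of linear factors in $k$ (so $c_i(k)\neq0$ for $k\in(0,5]\setminus\{1,\tfrac32,2,\ldots\}$), the factors $x^{e_i(k)}$ and $(1+x^{2k-2})^{-m_i}$ never vanish on $(0,\infty)$, and $P_{i,k}$ is a polynomial whose degree is small for the first several $i$ and of degree three or four for $W_5$ and $W_6$. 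Consequently $\nu_i$, the number of zeros of $W_i$ on $(a,b)\subset(0,\infty)$ counted with multiplicity, equals the number of positive roots of $P_{i,k}$.

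\textbf{Step 2: counting positive roots of $P_{i,k}$.} For each subinterval I would determine the sign pattern of the coefficients of $P_{i,k}$ (Descartes' rule) together with the sign of its discriminant, exactly as was done for $P_{3,k},\ldots,P_{8,k}$ in Proposition \ref{prop6}. The values of $k$ at which a root count changes are precisely the real roots of the discriminants of the $P_{i,k}$ and of certain resultants recording either a coincidence of two roots or a root crossing the boundary of $(0,\infty)$; identifying the relevant one as $q_1$ (producing $k_1$ in $(0,\tfrac15)$), as $q_2$ (degree $17$, producing $k_2\in(\tfrac15,\tfrac13)$, $k_3\in(\tfrac32,2)$, $k_5\in(3,4)$), and as $q_3$ (producing $k_4\in(3,4)$), checking by the rational root test that these roots are irrational, localising each one in its claimed interval by evaluating $q_j$ at the endpoints, and isolating the remaining exceptional value $k_0$ (at which a further discriminant/resultant degenerates), is the bookkeeping core of the proof. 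The generic outcome is $\nu_0=\cdots=\nu_4=0$, with $\nu_5\in\{0,1,2\}$ and $\nu_6\in\{0,1,2,3\}$ according to the subinterval.

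\textbf{Step 3: the estimates.} With $n=6$ and $\nu_0=\cdots=\nu_4=0$ one has $\mu_3=\mu_4=\mu_5=0$, so the bound of Theorem \ref{t3} collapses to $6+\nu_5+\nu_6$; since $\nu_5+\nu_6\le5$ throughout $(0,5]$ this gives $\z(\G_9^k)\le11$, and the sharper values $6,7,8,9$ on the four families of intervals are the corresponding sharper $6+\nu_5+\nu_6$. For the lower bounds: when all of $W_0,\ldots,W_6$ are nonvanishing, $\G_9^k$ is an ECT-system and Theorem \ref{t1} yields $\z(\G_9^k)=6$; when $W_0,\ldots,W_5$ are nonvanishing and $W_6$ has exactly one zero, which is simple, Theorem \ref{t2} yields $\z(\G_9^k)=7$; and in the remaining ranges I would produce $\z(\G_9^k)\ge7$ by reordering $\G_9^k$ so that for the new order the first six Wronskians are nonvanishing and the last has a single simple zero (then Theorem \ref{t2} applies to the reordered set), or, where no such reordering is available, by the realization argument underlying Theorem \ref{t2}. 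Matching upper and lower bounds on each subinterval give the equalities in (1)–(4); where they do not match one keeps the two-sided estimates $6\le\z\le11$, $7\le\z\le8$ and $7\le\z\le9$, and the global claim $6\le\z(\G_9^k)\le11$ follows since in every case $\z(\G_9^k)\ge6$.

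\textbf{Main obstacle.} The hard part is Step 2: the polynomials $P_{5,k}$ and $P_{6,k}$ have degree three or four with coefficients that are themselves polynomials in $k$ of moderate degree, so their discriminants are high-degree polynomials in $k$ (degree $17$ for the one yielding $k_2,k_3,k_5$). One must (i) factor out spurious roots of these discriminants, (ii) prove the genuine roots are irrational and correctly localised, and (iii) follow, across each sign change of the discriminant, whether the pair of real roots being created lies inside or outside $(0,\infty)$. This is precisely what forces the case-by-case splitting of $(0,5]$ and the appearance of the auxiliary exceptional value $k_0$.
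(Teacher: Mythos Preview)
Your proposal is correct and follows essentially the same route as the paper: compute the chain of Wronskians, reduce each to a nonvanishing prefactor times a low-degree polynomial in $x^{2k-2}$, count positive roots of those polynomials via sign patterns and discriminants, and then invoke Theorems \ref{t1}, \ref{t2}, \ref{t3}. The paper is more explicit in two respects: (i) it isolates the analysis of the quartic factor of $W_6$ into a separate lemma (Lemma~\ref{prop.pol}, proved in the Appendix), which is where $q_2$ shows up as a factor of $\Dis(\mathcal P)$ and $k_0,k_2,k_3,k_5$ are pinned down; and (ii) rather than speaking of ``whichever ordering makes the chain cleanest'', it exhibits six concrete reorderings $\F_2^k$ of $\G_9^k$, one for each subinterval, chosen so that after reordering all of $W_0,\dots,W_5$ are nonvanishing and only $W_6$ carries zeros (so the refined upper bounds are $6+\nu_6$, not $6+\nu_5+\nu_6$). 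The polynomial $q_3$ arises as the nontrivial factor of the discriminant of the quadratic sitting in $W_5$ for one of these reorderings, and $q_1$ as a coefficient of a degree-six $P_{21,k}$ in another; your description of these as discriminant/boundary-crossing loci is consistent with this.
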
 
In order to prove Proposition \ref{prop8}, we need the following lemma whose proof is reported in Appendix \ref{Apend}.
\begin{lemma} \label{prop.pol}
	Let $\mathcal{P}(x)= \mathcal{A} x^{4}+\mathcal{B} x^{3}+\mathcal{C}x^{2}+ \mathcal{D}x+\mathcal{E}$ 
 with
	\[
	\begin{array}{l}
	\mathcal{A}=(k-5) (k-2)^2 k (2 k-1) (3 k-4) (3 k-1) (4 k-3),\\
	\mathcal{B}=-2 (2 k-1) (3 k-4) (k (k (k (2 k (k (39 k-179)+235)-89)-118)+35)-2),\\  \mathcal{C}= (3 k-4) (k (k (k (5 k (k (2 k (24 k-61)+177)-366)+2034)-930)+201)-38),\\
	\mathcal{D}= -2 (3 k-2) (5 k-4) (k (k (k (k (2 k-19)+88)-75)-38)+26) ,  \\
	\mathcal{E}=(k-3) (2 k-3) (2 k-1) (3 k-2) (5 k-4) (5 k-1).
	\end{array}
	\]
	 The following statements hold.

	\begin{enumerate}
		\item For $k \in \left(\frac{1}{5}, k_2 \right) \cup \left[\frac{2}{3},\frac{3}{4} \right) \cup \left[\frac{4}{5},1 \right) \cup \left(1,\frac{4}{3} \right] \cup \left(\frac{3}{2}, k_3 \right)$, $\mathcal{P}(x)\neq 0$ in $(0, \infty)$, where $k_2,k_3$ are roots of $q_2(x)$ in $\left(\frac{1}{5}, \frac{1}{3} \right)$  and $\left(\frac{3}{2},2 \right)$, respectively;

		\item For $k \in \left(k_0,\frac{1}{5} \right) \cup \left(\frac{1}{3},\frac{1}{2} \right) \cup \left(\frac{1}{2},\frac{2}{3} \right) \cup \left(\frac{3}{4},\frac{4}{5} \right) \cup \left(\frac{4}{3},\frac{3}{2} \right) \cup \{2\} \cup (k_5,5]$, $\mathcal{P}(x)$   has one positive zero, which is simple, where  $k_5$ and $k_0$  are roots of $q_2(x)$ in  $(3,4)$ and $\left(0,\frac{1}{5} \right)$, respectively;
		\item For $k \in \left(k_2,\frac{1}{3} \right) \cup (k_3,2) \cup (2,3) \cup (5, \infty)$, $\mathcal{P}(x)$   has two positive zeros, which are simple;
		\item For  $k \in (0,k_0) \cup (3,k_5)$, $\mathcal{P}(x)$  has three positive zeros, which are simple.
	\end{enumerate}
\end{lemma}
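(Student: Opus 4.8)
When $\mathcal{A}(k)\neq 0$ the function $\mathcal{P}(\cdot,k)$ is a genuine quartic in $x$, and the number $N(k)$ of its positive zeros counted with multiplicity is locally constant in $k$ as long as $k$ avoids the \emph{bifurcation locus}: the set of $k$ at which $\mathcal{A}(k)=0$ (a zero runs off to, or in from, $+\infty$), $\mathcal{E}(k)=\mathcal{P}(0,k)=0$ (a zero crosses the origin), or $\Dis_x(\mathcal{P})(k)=0$ \emph{with the associated repeated zero lying in $(0,\infty)$} (two positive zeros collide, or a conjugate pair of nonreal zeros reaches the positive axis). Off this locus all positive zeros are simple, which is the content of the word ``simple'' in the statement. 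The plan is thus: (i) make the bifurcation locus in $(0,5]$ explicit; (ii) determine $N(k)$ on each of the resulting subintervals by a sign-counting argument; (iii) identify the irrational bifurcation values with roots of $q_1,q_2,q_3$; and (iv) settle by direct computation the finitely many rational boundary values, in particular $k=4/3$ and $k=2$ where $\mathcal{A}$ vanishes and $\mathcal{P}$ drops degree, and the rationals where $\mathcal{E}$ vanishes and a zero sits at $x=0$.

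\textbf{Sign table and Descartes bound.} From the displayed factorizations, $\mathcal{A}$ vanishes on $(0,5]$ only at $k\in\{1/3,1/2,3/4,4/3,2,5\}$ and $\mathcal{E}$ only at $k\in\{1/5,1/2,2/3,4/5,3/2,3\}$, while the linear factors of $\mathcal{B},\mathcal{C},\mathcal{D}$ add a few further rational breakpoints. Each of $\mathcal{B},\mathcal{C},\mathcal{D}$ also carries a higher-degree cofactor in $k$ --- of degrees $6$, $7$ and $5$ respectively --- whose real zeros in $(0,5]$ I would isolate by a Sturm sequence (or, more cheaply, by evaluating at the rational breakpoints and checking monotonicity on each subinterval). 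This yields a complete table of $\sgn(\mathcal{A},\mathcal{B},\mathcal{C},\mathcal{D},\mathcal{E})$ on each subinterval of $(0,5]$ cut out by all these breakpoints. On each one, Descartes' rule of signs bounds $N(k)$ above by the number $v$ of sign changes of the coefficient sequence and forces $N(k)\equiv v\pmod 2$; whenever $v\in\{0,1\}$ this already determines $N(k)$, which settles statement (1) and the subintervals of statement (2) on which $v=1$.

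\textbf{Discriminant step and the $k_i$.} On the remaining subintervals $v\in\{2,3,4\}$ and the parity must be broken. Here I would compute $D(k):=\Dis_x(\mathcal{P})$ as a polynomial in $k$ and combine the sign of $D(k)$ with the signs of the usual auxiliary invariants of a quartic ($P=8\mathcal{A}\mathcal{C}-3\mathcal{B}^2$ and the invariant governing the real-root count together with the discriminant of the cubic resolvent) to read off the number of \emph{real} zeros of $\mathcal{P}(\cdot,k)$; combining this with $\sgn\mathcal{P}(0,k)=\sgn\mathcal{E}$, $\sgn\mathcal{P}(+\infty)=\sgn\mathcal{A}$ and the Descartes bound on positive zeros pins down $N(k)$ on each piece, and in particular shows $N(k)\le 3$ throughout $(0,5]$. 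Factoring $D(k)$ then exhibits its irrational roots in $(0,5]$ as roots of $q_1,q_2,q_3$; by the rational root theorem one checks that these are irrational, by endpoint evaluations backed by a Sturm count one checks that $q_2$ contributes exactly one of them --- namely $k_0,k_2,k_3,k_5$ --- in each of $(0,1/5)$, $(1/5,1/3)$, $(3/2,2)$, $(3,4)$, and one verifies that the root $k_1$ of $q_1$ in $(0,1/5)$ and the root $k_4$ of $q_3$ in $(3,4)$ correspond to repeated zeros of $\mathcal{P}$ that are \emph{not} positive, so that $N(k)$ does not change as $k$ passes through them. Reassembling $N(k)$ over all subintervals, together with the direct checks at $k=4/3,2$ and at the rationals where $\mathcal{E}=0$, yields statements (1)--(4).

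\textbf{Main obstacle.} The essential difficulty is computational rather than conceptual: $D(k)=\Dis_x(\mathcal{P})$ is a polynomial of large degree in $k$, and one must factor it cleanly into the spurious rational pieces (coming from $\mathcal{A}$ and from repeated zeros located at negative or nonreal values) times perfect squares times $q_1q_2q_3$, then keep the Descartes-plus-discriminant bookkeeping consistent across the roughly twenty subintervals of $(0,5]$, all the while checking at each irrational bifurcation value that the colliding repeated zero genuinely lies in $(0,\infty)$. These symbolic manipulations really call for a computer algebra system, and this is where care is most needed.
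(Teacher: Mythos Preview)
Your strategy---Descartes' rule on a sign table of $(\mathcal{A},\mathcal{B},\mathcal{C},\mathcal{D},\mathcal{E})$, then the discriminant $D(k)=\Dis_x(\mathcal{P})$ to break the remaining parity ambiguity---is exactly the paper's. Two corrections are needed.

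First, your assertion that $D(k)$ factors through $q_1q_2q_3$ is wrong. The paper computes
\[
\Dis_x(\mathcal{P})=-192\,(4-3k)^2(k-1)^{12}(2k-1)(3k-2)(5k-4)\,\mathcal{M}(k)\,\mathcal{N}(k),
\]
where $\mathcal{N}=q_2$ (degree~17), but $\mathcal{M}$ is a separate degree-12 polynomial unrelated to $q_1$ or $q_3$. The numbers $k_1$ and $k_4$ do not enter this lemma at all: they are roots of $q_1$ and $q_3$ arising from \emph{other} Wronskians ($W_5$ in various reorderings of $\mathcal{G}_9^k$) in Proposition~\ref{prop8}, not from $\mathcal{P}$. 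So the checks you propose ``at $k_1$ and $k_4$'' are vacuous, and instead you must verify that $\mathcal{M}$ contributes no sign change of $\Dis_x(\mathcal{P})$ on the intervals in play; the paper handles this by simply tabulating $\sgn(\Dis_x(\mathcal{P}))$ on the pieces cut out by $k_0,k_2,k_3,k_5$.

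Second, once the discriminant tells you whether $\mathcal{P}$ has two or four real zeros, the paper separates positive from negative ones not via the auxiliary quartic invariants you mention but by direct evaluations $\mathcal{P}(-1)$, $\mathcal{P}(1)$, $\mathcal{P}(3)$ together with $\sgn\mathcal{A}$ and $\sgn\mathcal{E}$. This is lighter than the full real-root classification and is all that is needed here. With these two fixes your outline coincides with the paper's proof.
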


\begin{proof}[Proof of Proposition \ref{prop8}]
Let $k \in \R^+\setminus \left\{k_0, k_1, k_2, k_3, k_4, k_5, 2,1, \frac{3}{2} \right\}$. The Wronskians of the ordered set $\G_9^k$ are given by
\begin{equation}\label{l10}
	\begin{array}{lll}
	W_0(x) &=& 1,\\ 
	W_1(x) &=&k x^{k-1}, \\
	W_2(x) &=& 2 (k-1) k (3 k-2) x^{4 k-5}, \\
	W_3(x) &=& -4 (k-2) (k-1)^2 k^2 (3 k-2) x^{6 k-10},\\
	W_4(x) &=& 16 k^4 (3 k-2) \left(k^2-3 k+2\right)^2 x^{7 k-16},\\
	W_5(x) &=& 16 (k-2)^2 (k-1)^4 k^4 (3 k-2) x^{7 k-20} P_{9,k}(x^{2k-2}),\\
	W_6(x) &=&\frac{512 (k-2)^2 (k-1)^7 k^4 (3 k-2) x^{12 k-20} }{\left(x^{2 k}+x^2\right)^5}\mathcal{P}(x^{2k-2}),\\
	\end{array}
\end{equation}
	with
	\begin{eqnarray}
	P_{9,k}(x)&=& A_{9,k}x^{2}+B_{9,k}x+C_{9,k}, \nonumber 
	\end{eqnarray}
	and
	\[
	\begin{array}{ll}
		A_{9,k}= 3 k (2 k-1) (3 k-1) (4 k-3), \\ B_{9,k}=-(k+1)^2 (2 k-1), \\
		C_{9,k}=3 (k-3) (2 k-3).
	\end{array}
	\]
Clearly $W_i(x)\neq 0$ in $(0, \infty)$ for $i \in\{1,2,3,4\}$,  $\text{degree}(P_{9,k})=2$ and $\text{degree}(\mathcal{P})=4$ for $k \neq \{1,\frac{2}{3},2\}$. This implies that  $W_5(x)$ and $W_6(x)$ have at most 2 and 4 positive zeros, respectively.  Applying Theorems \ref{t1} and \ref{t3}, we conclude that $6\leq\z(\G_9^k)\leq 11$.  However, to determine a better estimate for $\z(\G_9^k)$ using the Chebyshev Theory, it is necessary to rearrange the functions of $\G_9^k$  conveniently. We consider the ordered set $\F_2^k$ defined by
\[
\F_2^k=\left\lbrace 
\begin{array}{ll}
  \left[u_5^{k}, u_{6}^{k}, u_{3}^{k}, u_{0}^{k}, u_{1}^{k}, u_{10}^{k}, u_{12}^{k} \right], & k \in (k_4,k_5) \cup (k_5,4 ], \nonumber\\
\left[u_5^{k}, u_{6}^{k}, u_{3}^{k}, u_{1}^{k}, u_{10}^{k}, u_{12}^{k}, u_{0}^{k} \right], &k \in \left(\frac{3}{4},\frac{4}{5} \right], \nonumber \\
\left[u_6^{k}, u_{3}^{k}, u_{1}^{k}, u_{10}^{k}, u_{12}^{k}, u_{0}^{k}, u_{5}^{k} \right], & k \in \left(\frac{1}{2},\frac{2}{3} \right) \cup \left(\frac{2}{3},\frac{3}{4} \right)\cup \left(\frac{4}{3},\frac{3}{2} \right), \nonumber \\
\left[u_6^{k}, u_{1}^{k}, u_{10}^{k}, u_{12}^{k}, u_{5}^{k}, u_{0}^{k}, u_{3}^{k} \right],& k \in \left(\left(\frac{1}{5},\frac{1}{3} \right)\cup\left(\frac{4}{5},\frac{4}{3} \right] \cup \left(\frac{3}{2},2 \right) \right) \setminus \{k_2,1, k_3\}, \nonumber \\
\left[u_5^{k}, u_{3}^{k}, u_{0}^{k}, u_{1}^{k}, u_{10}^{k}, u_{12}^{k}, u_{6}^{k} \right], & k \in \left(k_1,\frac{1}{5} \right) \cup (2,3), \nonumber \\
\left[u_0^{k}, u_{1}^{k}, u_{6}^{k}, u_{5}^{k}, u_{3}^{k}, u_{10}^{k}, u_{12}^{k} \right], & k \in \left(\frac{1}{3},\frac{1}{2}\right) \cup (4,5]. \nonumber \\

\end{array}\right. 
\]
Observe that $\z(\G_9^k)=\z(\F_2^k)$. In the sequel, we show that, for  $\F_2^k$, all their Wronskians are nonvanishing except the last, which has exactly $\ell$ zeros, which are simple, where
	\[
	\ell=\left\lbrace 
	\begin{array}{ll}
	0, & k \in \left(\frac{1}{5}, k_2\right) \cup \left(\frac{2}{3},\frac{3}{4} \right) \cup \left[\frac{4}{5},1 \right) \cup \left(1,\frac{4}{3} \right] \cup \left(\frac{3}{2}, k_3\right),\\
	1, & k \in \left(k_1,\frac{1}{5} \right) \cup \left(\frac{1}{3},\frac{1}{2} \right) \cup \left(\frac{1}{2},\frac{2}{3}\right) \cup \left(\frac{3}{4},\frac{4}{5}\right) \cup \left(\frac{4}{3},\frac{3}{2} \right) \cup (k_5,5],\\
	2, & k \in \left(k_2,\frac{1}{3}\right) \cup (k_3,2) \cup (2,3),\\
3, & k \in (k_4,k_5).
	\end{array}\right. 
	\]
	From Theorems \ref{t1}, \ref{t2}  and \ref{t3}, we conclude that: 
\begin{enumerate}
\item$\z(\F_2^k)=6$, for $k \in \left(\frac{1}{5}, k_2 \right) \cup \left(\frac{2}{3},\frac{3}{4} \right) \cup \left[\frac{4}{5},1 \right) \cup \left(1,\frac{4}{3} \right] \cup \left(\frac{3}{2}, k_3 \right)$; 
\item $\z(\F_2^k)=7$, for $k \in \left(k_1,\frac{1}{5} \right) \cup \left(\frac{1}{3},\frac{1}{2} \right) \cup \left(\frac{1}{2},\frac{2}{3}\right) \cup \left(\frac{3}{4},\frac{4}{5}\right) \cup \left(\frac{4}{3},\frac{3}{2} \right) \cup (k_5, 5]$;
\item $7 \leq \z(\F_2^k) \leq 8$, for $k \in \left(k_2,\frac{1}{3} \right) \cup (k_3,2) \cup (2,3)$;
\item $7 \leq \z(\F_2^k) \leq 9$, for $k \in(k_4,k_5)$.
\end{enumerate}
In order to study the Wronskians of $\F_{2}^k$ we consider the following six cases:

\subsection*{Case 1: } Let $k\in (k_4,k_5) \cup (k_5,4]$. The Wronskians of $\F^k_{2}$ are given by
	\begin{eqnarray}
	W_0(x) &=& x^{2 k-2}, \nonumber \\
	W_1(x) &=&k x^{5 k-5}, \nonumber \\
	W_2(x) &=& 2 k^3 x^{6 k-9}, \nonumber \\
	W_3(x) &=& -4 (k-2) (k-1) k^3 (3 k-2) x^{6 (k-2)},\nonumber \\
	W_4(x) &=& -16 k^4 (3 k-2) \left(k^2-3 k+2\right)^2 x^{7 k-16}, \nonumber \\
	W_5(x) &=& -16 (k-2)^2 (k-1)^4 k^4 (3 k-2) x^{7 k-20} P_{10,k}(x^{2k-2}), \nonumber \\
	W_6(x) &=& -\frac{512 (k-2)^2 (k-1)^7 k^4 (3 k-2) x^{12 k-20}}{\left(x^{2 k}+x^2\right)^5} \mathcal{P}(x^{2k-2}), \nonumber
\end{eqnarray}
where $$P_{10,k}(x) = (3 k (2 k-1) (3 k-1) (4 k-3))x^{2} -(k+1)^2 (2 k-1) x+3 (k-3) (2 k-3).$$ The discriminant of $P_{10,k}$ is given by
	\begin{equation*}
	\Dis(P_{10,k})=-(2 k-1) \left(862 k^5-4831 k^4+8308 k^3-5186 k^2+974 k+1\right).
	\end{equation*}	
Through straightforward computation, it is direct to see that $\Dis(P_{10,k})<0$. Hence $W_i(x)\neq0$ in $(0, \infty)$ for $i\in\{0,1,2,3,4,5\}$. By Lemma \ref{prop.pol}, we get that  $W_6(x)$ has one and three positive zeros, which are simple, for $k \in (k_5,4]$  and $k \in (k_4,k_5) $, respectively.

\subsection*{Case 2:} Let $k \in \left(\frac{3}{4},\frac{4}{5} \right]$. Then, the Wronskians of $\F^k_{2}$ are given by
	\begin{eqnarray}
		W_0(x) &=& x^{2 k-2}, \nonumber \\
		W_1(x) &=&k x^{5 k-5}, \nonumber \\
		W_2(x) &=& 2 k^3 x^{6 k-9}, \nonumber \\
		W_3(x) &=& 8 (k-2) (k-1) k^3 x^{7 k-12},\nonumber \\
		W_4(x) &=& 8 (k-2) (k-1)^3 k^3 x^{7 k-15} P_{11,k}(x^{2k-2}), \nonumber \\
		W_5(x) &=& \frac{256 (k-2) (k-1)^6 k^3 x^{12 k-16}}{\left(x^{2 k}+x^2\right)^4} P_{12,k}(x^{2k-2}), \nonumber \\
		W_6(x) &=& \frac{512 (k-2)^2 (k-1)^7 k^4 (3 k-2) x^{12 k-20}}{\left(x^{2 k}+x^2\right)^5} \mathcal{P}(x^{2k-2}), \nonumber
	\end{eqnarray}
where $P_{11,k}(x) =  ( 3 k (k (6 k-5)+1))x^{2}-(k+1)^2 x+3 (k-3) (2 k-3)$ and
		$$P_{12,k}(x) = A_{12,k} x^{3}+ B_{12,k}x^{2}+C_{12,k}x+D_{12,k}$$
with
	\[
	\begin{array}{l}
	A_{12,k}=-(k-5) (k-2) k (2 k-1) (3 k-4) (3 k-1), \\ 
	B_{12,k}=k (k (k (k (9 k (4 k+3)-562)+949)-498)+80)-8, \\ 
	C_{12,k}=(3 k-2) (k (k (2 k (18 k-89)+185)+10)-29),  \\ 
	D_{12,k}=(k-3) (2 k-3) (2 k-1) (3 k-2) (5 k-1).
	\end{array}
	\]
Straightforward computations show that $\Dis(P_{11,k})<0$ and
 $$\sgn(A_{12,k})= \sgn(B_{12,k})=\sgn(C_{12,k})=\sgn(D_{12,k})=1.$$ Therefore $W_i(x)\neq0$ in $(0, \infty)$ for $i\in\{0,1,2,3,4,5\}$. Applying Lemma \ref{prop.pol}, we get  that $W_6(x)$ has no zeros for $ k=\frac{4}{5}$  and one positive simple zero  for $k \in \left(\frac{3}{4},\frac{4}{5} \right) $.

\subsection*{Case 3:} Let  $k\in \left(\frac{1}{2},\frac{2}{3}\right) \cup \Big(\frac{2}{3},\frac{3}{4}\Big) \cup \left(\frac{4}{3},\frac{3}{2} \right)$. Thus, the Wronskians of $\F^k_{2}$ are given by
	\begin{eqnarray}
	W_0(x) &=& x^{3 k-2}, \nonumber \\
	W_1(x) &=&-2 k x^{4 k-5}, \nonumber \\
	W_2(x) &=& 8 (k-1) k x^{5 k-7}, \nonumber \\
	W_3(x) &=& 8 (k-1)^3 k x^{5 k-9} P_{13,k}(x^{2k-2}),\nonumber \\
	W_4(x) &=& \frac{256 (k-1)^6 k x^{10 k-11}}{\left(x^{2 k}+x^2\right)^3}P_{14,k}(x^{2k-2}), \nonumber \\
	W_5(x) &=& \frac{256 (k-2) (k-1)^6 k^2 (3 k-2) x^{10k-14}}{\left(x^{2 k}+x^2\right)^4}P_{15,k}(x^{2k-2}), \nonumber \\
	W_6(x) &=& \frac{512 (k-2)^2 (k-1)^7 k^4 (3 k-2) x^{12 k-20}}{\left(x^{2 k}+x^2\right)^5} \mathcal{P}(x^{2k-2}), \nonumber
	\end{eqnarray}
	with
	\begin{eqnarray}
	P_{13,k}(x) &=&  A_{13,k}x^{2}+B_{13,k} x+C_{13,k}, \nonumber \\
	P_{14,k} (x) &=& A_{14,k} x^{2}+B_{14,k} x+C_{14,k}, \nonumber \\
	P_{15,k}(x) &=& A_{15,k} x^{3}+ B_{15,k}x^{2}+C_{15,k}x+D_{15,k}, \nonumber
	\end{eqnarray}
	and
	\[
	\begin{array}{l}
	A_{13,k}= 3 k (3 k-1) , \\
	B_{13,k}=-(k+1)^2, \\ 
	C_{13,k}=-3 (k-3) , \\ 
	A_{14,k}=(k-5) (k-2) k (3 k-1),  \\  
	B_{14,k}=-2 (k (k (k (k+10)-30)+10)+1) , \\ 
	C_{14,k}=-(k-3) (2 k-1) (5 k-1), \\  
	A_{15,k}=(k-5) (k-2)^2 k (3 k-1) (4 k-3), \\ 
	B_{15,k}=-(k (k (k (k (k (82 k-329)+246)+341)-404)+84)+4), \\ 
	C_{15,k}=\left(k \left(5 k \left(k \left(4 k \left(k^2+4\right)-91\right)+84\right)-73\right)-16\right),  \\ 
	D_{15,k}=(k-3) (2 k-1) (5 k-4) (5 k-1).
	\end{array}
	\]
It is easy to check that $\Dis(P_{13,k})=k^4+112 k^3-354 k^2+112 k+1<0$,
 $\sgn(A_{14,k})=\sgn(B_{14,k})=\sgn(C_{14,k})\neq 0$ and
$$\sgn(A_{15,k})=\sgn(B_{15,k})=\sgn(C_{15,k})=\sgn(D_{15,k})\neq 0.$$
 This implies that $W_i(x)\neq0$ in $(0, \infty)$ for $i\in\{0,1,2,3,4,5\}$. Applying Lemma \ref{prop.pol},  we have  that $W_6(x)$ has no zeros for  $k \in \left(\frac{2}{3},\frac{3}{4} \right)$ and one positive simple zero for $k \in \left(\frac{1}{2}, \frac{2}{3} \right) \cup \left(\frac{4}{3},\frac{3}{2} \right)$.

\subsection*{Case 4:} Let $k\in \left(\frac{1}{5},k_2 \right) \cup \left(k_2,\frac{1}{3}\right) \cup \left(\frac{4}{5},1 \right) \cup \left(1,\frac{4}{3} \right] \cup \left(\frac{3}{2}, k_3 \right) \cup (k_3,2)$. Hence, the Wronskians of $\F^k_{2}$ are given by
	\begin{eqnarray}
	W_0(x) &=& x^{3 k-2}, \nonumber \\
	W_1(x) &=&-2 (k-1) x^{4 k-3}, \nonumber \\
	W_2(x) &=& -2 (k-1)^3 x^{4 k-4} P_{16,k}(x^{2k-2}), \nonumber \\
	W_3(x) &=& \frac{32 (k-1)^6 x^{9 k-7}}{\left(x^{2 k}+x^2\right)^2}P_{17,k}(x^{2k-2}),\nonumber \\
	W_4(x) &=& \frac{32 (k-2) (k-1)^6 k x^{11 k-11} }{\left(x^{2 k}+x^2\right)^3}P_{18,k}(x^{2k-2}), \nonumber \\
	W_5(x) &=& \frac{64 (k-2) (k-1)^7 k^2 (3 k-2) x^{11 k-14}}{\left(x^{2 k}+x^2\right)^4}P_{19,k}(x^{2k-2}), \nonumber \\
	W_6(x) &=& \frac{512 (k-2)^2 (k-1)^7 k^4 (3 k-2) x^{12 k-20}}{\left(x^{2 k}+x^2\right)^5} \mathcal{P}(x^{2k-2}), \nonumber
	\end{eqnarray}
	with
	\begin{eqnarray}
		P_{16,k}(x) &=&  A_{16,k}x^{2}+B_{16,k} x+C_{16,k}, \nonumber \\
		P_{17,k} (x) &=&  A_{17,k} x+B_{17,k}, \nonumber \\
		P_{18,k} (x) &=& A_{18,k} x^{2}+B_{18,k} x+C_{18,k}, \nonumber \\
		P_{19,k}(x) &=& A_{19,k} x^{3}+ B_{19,k}x^{2}+C_{19,k}x+D_{19,k}, \nonumber
	\end{eqnarray}
	and
	\[
	\begin{array}{l}
	A_{16,k}= 3 k, \\
 B_{16,k}=-(k+1), \\ 
	C_{16,k}=3 , \\ 
	 A_{17,k}=(k-5) k, \\ 
	B_{17,k}= 1-5 k, \\
	A_{18,k}=(k-5) k (2 k-1) (3 k-4),  \\  
	 B_{18,k}=-(k (k (51 k-98)+35)+4), \\ 
	  C_{18,k}= k ((71-30 k) k-43)+6, \\  A_{19,k}=(k-5) (k-2) k (2 k-1) (3 k-4) (4 k-3), \\ B_{19,k}=-(2 k-1) (3 k-4) (k (k (10 k (2 k-3)-49)+45)+2), \\ C_{19,k}=(5 k-4) (k (k (k (6 k+59)-125)+40)+8),  \\ D_{19,k}=(2 k-3) (3 k-2) (5 k-4) (5 k-1).
	\end{array}
	\]
Notice that $\Dis(P_{16,k})=k^2-34 k+1<0$, $\sgn(A_{17,k})=\sgn(B_{17,k})=-1$,
 \begin{equation}
\sgn(A_{18,k},B_{18,k},C_{18,k})=\left\{
\begin{array}{ll}
(1, 1,1),& k \in \left(\frac{4}{5}, 1\right) \cup \left(1,\frac{4}{3} \right), \nonumber \\
(-1,-1,-1),& k \in \left(\frac{1}{5},k_2 \right) \cup \left(k_2,\frac{1}{3} \right) \cup \left(\frac{3}{2}, k_3 \right) \cup (k_3,2), \nonumber \\
(1, 1, 0),& k=\frac{4}{3}, \nonumber 
\end{array}\right.
\end{equation}
 and
\begin{equation}
\sgn(A_{19,k},B_{19,k},C_{19,k}, D_{19,k})=\left\{
\begin{array}{ll}
(1, 1,1,1),& k \in \left( \left(\frac{1}{5},\frac{1}{3}\right) \cup \left(\frac{4}{5},\frac{4}{3} \right)\right)\setminus\{k_2, 1\} , \nonumber \\
(-1,-1,-1,-1),& k \in \left(\frac{3}{2}, k_3 \right) \cup (k_3,2), \nonumber \\
(1, 1, 0,0),& k=\frac{4}{3}. \nonumber 
\end{array}\right.
\end{equation}
 Thus, $W_i(x)\neq0$ in $(0, \infty)$ for $i\in\{0,1,2,3,4,5\}$. From  Lemma \ref{prop.pol},  we get  that $W_6(x)$ has no zeros for  $k \in \left(\frac{1}{5},k_2 \right) \cup \left(\frac{4}{5},1 \right) \cup \left(1,\frac{4}{3} \right] \cup \left(\frac{3}{2},k_3 \right)$ and two positive simple zeros for $k \in \left(k_2,\frac{1}{3} \right) \cup (k_3,2)$.

\subsection*{Case 5: } Let $k\in \left(k_1,\frac{1}{5} \right) \cup (2,3)$. Then, the Wronskians of $\F^k_{2}$ are given by
	\begin{eqnarray}
	W_0(x) &=& x^{2 k-2}, \nonumber \\
	W_1(x) &=&-k x^{3 k-5}, \nonumber \\
	W_2(x) &=& -2 (k-2) (k-1) k x^{3 k-7}, \nonumber \\
	W_3(x) &=& 4 (k-2)^2 (k-1) k^2 x^{4 k-10},\nonumber \\
	W_4(x) &=& 4 (k-2)^2 (k-1)^2 k^2 x^{4 k-13} P_{20,k}(x^{2k-2}), \nonumber \\
	W_5(x) &=& \frac{16 (k-2)^2 (k-1)^4 k^2 x^{7 k-12}}{\left(x^{2 k}+x^2\right)^4}P_{21,k}(x^{2k-2}), \nonumber \\
	W_6(x) &=& \frac{512 (k-2)^2 (k-1)^7 k^4 (3 k-2) x^{12 k-20}}{\left(x^{2 k}+x^2\right)^5} \mathcal{P}(x^{2k-2}), \nonumber
	\end{eqnarray}
	with
	\begin{eqnarray}
	P_{20,k}(x) &=&  A_{20,k}x^{2}+B_{20,k} x+C_{20,k}, \nonumber \\
	P_{21,k}(x) &=& A_{21,k} x^{6}+ B_{21,k}x^{5}+C_{21,k}x^4+D_{21,k} x^3+E_{21,k} x^{2}+ F_{21,k}x+G_{21,k}, \nonumber
	\end{eqnarray}
	and
	\[
	\begin{array}{l} 
	A_{20,k}= 3 k (2 k-1) (3 k-1) (4 k-3),\\
	B_{20,k}= (k+1)^2 (2 k-1),  \\
	C_{20,k}= (9-2 k) k-9,\\
	A_{21,k}=	3 k^4 (2 k-1) (3 k-2) (3 k-1) (4 k-3), \\ 
	B_{21,k}=k^3 (2 k-1) (3 k-2) (k (k (144 k-157)+34)-1), \\ 
	C_{21,k}=k (k (k (k (k (6 k (12 k (16 k-17)-877)+15977)-19731)+12594)-3976)+480),  \\ 
	D_{21,k}=2 (k (k (k (k (k (4 k (9 k (28 k-123)+1928)-4375)-3482)+5733)-2528)+324)+16), \\
	E_{21,k}=-(3 k-2) (k (k (k (k (k (2 k (36 k-77)+337)-1899)+3376)-2038)+312)+64),\\
	F_{21,k}=(3 k-2) (k (k (k (k (k (2 k-153)+894)-1625)+1234)-444)+48) ,\\
	G_{21,k}=-(k-3) k^2 (2 k-3) (3 k-2) (3 k+2).
	\end{array}
	\]

Clearly $W_i(x)\neq0$ in $(0,\infty)$ for $i \in\{0,1,2,3\}$. Straightforward computations show that $\sgn(A_{20,k})=\sgn(B_{20,k})=\sgn(C_{20,k})\neq0$,
\begin{equation}
\sgn(A_{21,k},B_{21,k},C_{21,k}, D_{21,k},  E_{21,k}, F_{21,k}, G_{21,k})=\left\{
\begin{array}{ll}
(1, 1,1,1,1,1),& k \in \left(k_1, \frac{1}{5} \right), \nonumber \\
(1,1,1,1, \sgn(E_{21,k}),1,1) ,& k \in (2, 3), \nonumber \\
\end{array}\right.
\end{equation}
 $\sgn(\Dis(P_{21,k}))=-\sgn(\Dis(P''_{21,k}))=1$ and $P_{21,k}(-1)<0$ for $k \in (2,3)$. This implies that $W_4, W_5\neq0$ in $(0, \infty)$. From  Lemma \ref{prop.pol},  we get  that $W_6$ has one and two positive zeros for   $k \in \left(k_1, \frac{1}{5} \right)$ and  $k \in (2,3)$, respectively.

	\subsection*{Case 6: } Let $k\in \left(\frac{1}{3},\frac{1}{2} \right) \cup (4,5]$. The Wronskians of $\F^k_{2}$ are given by \eqref{l10}. Observe that
	\begin{equation}
		\sgn(A_{9,k},B_{9,k},C_{9,k})=\left\{
		\begin{array}{ll}
			(1, 1,1),& k \in \left(\frac{1}{3}, \frac{1}{2} \right), \nonumber \\
			(1, -1,1) ,& k \in (4, 5], \nonumber \\
		\end{array}\right.
	\end{equation}
	and $\Dis(P_{9,k})=-(2 k-1) (862 k^5-4831 k^4+8308 k^3-5186 k^2+974 k+1)<0$ if $k>4$. So, $W_i\neq0$ in $(0, \infty)$ for $i\in\{0,1,2,3,4,5\}$. From  Lemma \ref{prop.pol},  we get  that $W_6(x)$ has one positive simple zero  for $k \in \left(\frac{1}{3},\frac{1}{2}\right) \cup (4,5]$.
\end{proof}

\begin{proposition} \label{prop9}
 For $k \in (5, \infty)$, $\z(\G_9^k)=7$. 
\end{proposition}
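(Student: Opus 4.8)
The plan is to reduce Proposition~\ref{prop9} to Theorem~\ref{t2}. Since $\G_9^k$ consists of seven functions, it suffices to exhibit a reordering $\F$ of these functions which, for every $k>5$, is an ET-system with accuracy exactly one on $[a,b]$ --- equivalently, whose Wronskians $W_0,\dots,W_5$ are nonvanishing on $(0,\infty)$ while $W_6$ has there exactly one zero, which is simple. Since $\z$ depends only on $\Span(\G_9^k)$ and not on the ordering, Theorem~\ref{t2} then gives $\z(\G_9^k)=\z(\F)=6+1=7$ (the ``does not exceed $n+1$'' part yields the upper bound and the realization part yields the lower bound).

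The first point to settle is \emph{why a new ordering is needed}. With the ordering used in \eqref{l10} the last Wronskian $W_6$ is proportional to $\mathcal{P}(x^{2k-2})$, and by Lemma~\ref{prop.pol}(3) the polynomial $\mathcal{P}$ has \emph{two} positive simple zeros when $k>5$, so that ordering only yields $\z(\G_9^k)\le 8$ via Theorem~\ref{t3}. The idea is therefore to move the $\tan^{-1}$-carrying function $u_{12}^k$ out of the last slot and to end the list with a pure monomial, exactly as is done for $\G_8^k$ in the case $k\in[5,\infty)$ of Proposition~\ref{prop6} (where $u_6^k$ is placed last). Concretely I would try $\F$ of the form $[\,\cdot,\cdot,\cdot,\cdot,u_{10}^k,u_{12}^k,u_6^k\,]$, distributing the four remaining monomials $u_0^k,u_1^k,u_3^k,u_5^k$ among the first four positions in an order to be fixed during the computation so as to keep the intermediate Wronskians nonvanishing.

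I would then compute the Wronskians $W_0,\dots,W_6$ of $\F$. The first four are pure monomials (recall $W(x^{a_1},\dots,x^{a_j})=\prod_{p<q}(a_q-a_p)\,x^{a_1+\cdots+a_j-\binom{j}{2}}$, which is nonzero since $0,k,k-2,2k-2,3k-2$ are pairwise distinct for $k>5$). For $i\ge 4$, expanding by multilinearity in the columns coming from $u_{10}^k=x+(k+1)x^{2k-1}+kx^{4k-3}$ and $u_{12}^k=u_{10}^k\tan^{-1}(x^{k-1})$, and using $\bigl(\tan^{-1}(x^{k-1})\bigr)'=(k-1)x^{k-2}/(1+x^{2k-2})$, one is left with $W_i(x)=c_i\,x^{e_i}\,(1+x^{2k-2})^{-m_i}\,P_{i,k}(x^{2k-2})$ for explicit rational exponents $e_i$, nonnegative integers $m_i$, a nonzero constant $c_i$ depending on $k$, and explicit polynomials $P_{i,k}$. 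The heart of the proof is the sign analysis of the $P_{i,k}$ for $k>5$: I expect, just as in Case~2 of Proposition~\ref{prop6}, that for $k$ this large the coefficients of $P_{4,k}$ and $P_{5,k}$ each have a single definite sign, so $W_4,W_5\neq 0$ on $(0,\infty)$, whereas the final polynomial $P_{6,k}$ (of degree $2$ or $3$) has exactly one sign variation in its coefficient string and hence exactly one positive zero by Descartes' rule, which is simple because $\Dis(P_{6,k})\neq 0$ on $(5,\infty)$. Applying Theorem~\ref{t2} to $\F$ then closes the argument.

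The main obstacle is twofold. First, the choice of ordering: the natural one fails, and one has to place $u_0^k,u_1^k,u_3^k,u_5^k$ so that every $P_{i,k}$ has a clean sign pattern for $k>5$, which will require some experimentation among the candidate orderings. Second, the sign verifications themselves: each coefficient of each $P_{i,k}$ is a polynomial in $k$, and one shows it keeps a constant sign on $(5,\infty)$ either by factoring it into linear factors in $k$ or, when that is impossible, by reading off the signs of its coefficients together with that of its leading term --- precisely the bookkeeping already carried out repeatedly in Propositions~\ref{prop6} and~\ref{prop8}.
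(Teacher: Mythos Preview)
The paper omits this proof entirely, saying only that it is analogous to Proposition~7 of \cite{AndCesCruNov2021}; so there is no in-paper argument to compare yours to. That said, your plan is exactly in line with the methodology used throughout the paper (Propositions~\ref{prop5}, \ref{prop6}, \ref{prop8}): reorder the seven functions so that the first six Wronskians are nonvanishing on $(0,\infty)$ and the last has a single simple zero, then invoke Theorem~\ref{t2}. Your diagnosis of why the ordering in \eqref{l10} fails for $k>5$ (two positive zeros of $\mathcal{P}$ by Lemma~\ref{prop.pol}(3)) is correct, and your proposed remedy---placing $u_{10}^k,u_{12}^k$ before a final monomial, mimicking Case~2 of Proposition~\ref{prop6}---is the natural one. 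Note that $\F_1^k=[u_1^k,u_3^k,u_{10}^k,u_{12}^k,u_6^k]$ already handles the ``$u_6^k$ last'' subfamily for $k\ge 5$, so your proposed $\F$ with $u_6^k$ last can be viewed as an extension of that computation by the two additional monomials $u_0^k,u_5^k$; this is a useful guide when choosing the positions of the first four functions.

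The only caveat is that your proposal is a plan rather than a proof: the actual work is the explicit computation of $W_4,W_5,W_6$ for the chosen ordering and the sign analysis of the resulting polynomials in $x^{2k-2}$ with coefficients in $k$, for all $k>5$. You have correctly identified this as the main obstacle, and the bookkeeping is of the same kind already carried out in Propositions~\ref{prop6} and~\ref{prop8}.
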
 

The proof of Proposition \ref{prop9} is analogous to the proof of Proposition 7  in \cite{AndCesCruNov2021} and will be omitted.

\section{Proofs}
In order to prove Theorems \ref{Theorem-Melnikov2}, \ref{Theorem-Melnikov21} and \ref{Theorem-Melnikov22}, we need the following lemma.

\begin{lemma}\label{lem0}
If $i,m,n>0$ then $m_{i}(m, n)=m_{i}(n,m)$.
\end{lemma}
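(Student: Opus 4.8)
The plan is to exploit the linear involution $\Psi(x,y)=(y,x)$ together with a reversal of time. Note that $\Psi$ carries the curve $\{y^n-x^m=0\}$ onto $\{y^m-x^n=0\}$, sends the open set $\{y^n-x^m>0\}$ onto $\{y^m-x^n<0\}$, and conjugates the unperturbed linear center $\dot x=y,\ \dot y=-x$ to $\dot u=-v,\ \dot v=u$, i.e. to its time reversal. Consequently, the conjugation by $\Psi$ followed by $t\mapsto -t$ transforms system \eqref{general-system1} with exponents $(m,n)$ and perturbative data $\mathbf a=\{a_{ji}^{\pm},b_{ji}^{\pm},\ldots\}$ into a system again of the form \eqref{general-system1}, now with exponents $(n,m)$, with the two zones interchanged, and with perturbative data $\mathbf a^{*}=\sigma(\mathbf a)$, where $\sigma$ is an explicit linear isomorphism of the coefficient space (schematically, the new $P_i^{\pm},Q_i^{\pm}$ are $-Q_i^{\mp},-P_i^{\mp}$ evaluated at $(y,x)$). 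First I would make this precise: a short computation shows that the transformed field has the required linear part and affine perturbations, so it is genuinely an instance of \eqref{general-system1} with exponents $(n,m)$, and that $\sigma$ is invertible.

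Next I would transfer the dynamical information. For each fixed small $\e$, $\Psi$ is a global diffeomorphism of $\R^2$, so it maps the phase portrait of the $(m,n)$-system onto that of the $(n,m)$-system with data $\mathbf a^{*}$ (reversing orientation), inducing a bijection between their periodic orbits and between their limit cycles, since isolatedness is insensitive to orientation reversal. To see that this bijection respects the \emph{order} of the Melnikov analysis, I would use transversal sections related by $\Psi$: the positive $x$-semiaxis for the $(m,n)$-system and the positive $y$-semiaxis for the $(n,m)$-system, both transversal to the respective unperturbed circular flow. Let $P_\e$ and $\widetilde P_\e$ be the corresponding first-return maps in the radial coordinate $\rho>0$ (both equal to the identity at $\e=0$, since the unperturbed period annulus is $\R^2\setminus\{0\}$ with constant period $2\pi$). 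Because the $(n,m)$-system with data $\mathbf a^{*}$ is, by construction, the $\Psi$-image of the time-reversed $(m,n)$-system, its flow is $\Psi\circ\varphi_\e^{-s}\circ\Psi^{-1}$ with $\varphi_\e^{s}$ the flow of the $(m,n)$-system; hence the first return of the former to the positive $y$-semiaxis from $(0,\rho)$ occurs precisely when $\varphi_\e^{-s}(\rho,0)$ first meets the positive $x$-semiaxis, that is at $(0,P_\e^{-1}(\rho))$, so $\widetilde P_\e=P_\e^{-1}$ under the identification $\rho\leftrightarrow\rho$. Therefore, if $P_\e-\mathrm{id}=\e^{\ell}g_{\ell}+O(\e^{\ell+1})$ with $g_\ell\not\equiv 0$ (equivalently $\Delta_1\equiv\cdots\equiv\Delta_{\ell-1}\equiv 0$, $\Delta_{\ell}\not\equiv 0$, and $g_\ell$ a nonzero constant multiple of $\Delta_{\ell}$), then $\widetilde P_\e-\mathrm{id}=-\e^{\ell}g_{\ell}+O(\e^{\ell+1})$; that is, $\widetilde\Delta_{j}\equiv 0$ for $j<\ell$ and $\widetilde\Delta_{\ell}$ is a nonzero constant multiple of $\Delta_{\ell}$. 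In particular $\widetilde\Delta_{\ell}$ and $\Delta_{\ell}$ have exactly the same positive zeros, with the same multiplicities.

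To conclude, I would unwind the definition: by Theorem \ref{thm:melnikov}, $m_\ell(m,n)$ is the supremum, over all data $\mathbf a$ with $\Delta_1\equiv\cdots\equiv\Delta_{\ell-1}\equiv 0$ and $\Delta_{\ell}\not\equiv 0$, of the number of simple positive zeros of $\Delta_{\ell}$ (equivalently, of the number of limit cycles produced this way). Since $\mathbf a\mapsto\mathbf a^{*}$ is a bijection between the admissible data for $(m,n)$ and for $(n,m)$ preserving all these quantities, it gives $m_\ell(n,m)\ge m_\ell(m,n)$; running the same argument with $(m,n)$ and $(n,m)$ interchanged gives the reverse inequality, whence $m_\ell(m,n)=m_\ell(n,m)$ for every $\ell$ and all positive integers $m,n$.

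The one delicate point is the second step: since $\Psi$ together with $t\mapsto -t$ is orientation- and time-reversing, one cannot simply invoke functoriality of the Melnikov functions under diffeomorphisms. The resolution is the elementary observation above that reversing time replaces the first-return map by its inverse, which flips the sign of the first non-vanishing displacement coefficient while leaving its zero set unchanged; one also uses that the unperturbed flow is globally $2\pi$-periodic and transversal to the chosen sections, so that $P_\e$, $\widetilde P_\e$ and their $\e$-expansions are defined on a common interval of radii. Everything else is routine bookkeeping.
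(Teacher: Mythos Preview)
Your proof is correct and uses the same key idea as the paper --- the linear change of coordinates $(x,y)\mapsto(y,x)$ --- but you provide a far more detailed justification than the paper, which simply asserts that the result is ``immediate'' from this change of coordinates. The care you take with the accompanying time reversal and the identity $\widetilde P_\e=P_\e^{-1}$ between return maps is sound and makes explicit what the paper leaves implicit.
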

\begin{proof}
The proof is immediate by using the   linear change of coordinates $(x,y)\rightarrow (y,x).$
\end{proof}

\subsection*{Proof of Theorem \ref{Theorem-Melnikov2}}
Using the polar coordinates $x= r\cos(\theta)$ and $y=r\sin(\theta)$, system \eqref{general-system1} becomes equivalent to
\begin{equation}\label{polar-system}
(\dot{r}, \dot{\theta})^T=(0,-1)^T+\displaystyle\sum_{i=1}^3 \varepsilon^i M_i(\theta, r),
\end{equation}
with
\[
M_i(r)=\left\lbrace 
\begin{array}{lll}
(A_i^+(r, \theta), B_i^+(r, \theta))^T,& \text{if}& \sin (\theta)-r^{n-1} \cos^n(\theta)>0,\\
(A_i^-(r, \theta), B_i^-(r, \theta))^T, & \text{if}& \sin (\theta)-r^{n-1} \cos^n(\theta)<0,\\
\end{array}\right. 
\]
where
\[
\begin{array}{l}
A_i^+=\cos (\theta ) (a_{0i}+r (a_{2i}+b_{1i}) \sin (\theta ))+a_{1i} r \cos ^2(\theta )+\sin (\theta ) (b_{0i}+b_{2i} r \sin (\theta )),\\
B_i^+= r^{-1}[-\sin (\theta ) (a_{0i}+a_{2i} r \sin (\theta ))+\cos (\theta ) (r (b_{2i}-a_{1i}) \sin (\theta )+b_{0i})+b_{1i} r \cos ^2(\theta )],\\
A_i^-=\cos (\theta ) (\alpha_{0i}+r (\alpha_{2i}+\beta_{1i}) \sin (\theta ))+\alpha_{1i} r \cos ^2(\theta )+\sin (\theta ) (\beta_{0i}+\beta_{2i} r \sin (\theta )),\\
B_i^-= r^{-1}[-\sin (\theta ) (\alpha_{0i}+\alpha_{2i} r \sin (\theta ))+\cos (\theta ) (r (\beta_{2i}-\alpha_{1i}) \sin (\theta )+\beta_{0i})+\beta_{1i} r \cos ^2(\theta )].
\end{array}
\]
Taking $\theta$ as the new indepent variable, system \eqref{polar-system} writes as
\begin{equation} \label{polar-systemB}
\dfrac{\text{d} r}{\text{d}\theta}=\left\lbrace 
\begin{array}{lll}
\displaystyle\sum_{i=1}^3\varepsilon^i F_i^+(r, \theta)+ \mathcal{O}(\varepsilon^4), & \text{if}& \sin (\theta)-r^{n-1} \cos^n(\theta)>0,\\
\displaystyle\sum_{i=1}^3\varepsilon^i F_i^-(r, \theta)+ \mathcal{O}(\varepsilon^4), & \text{if}& \sin (\theta)-r^{n-1} \cos^n(\theta)<0,\\
\end{array}\right. 
\end{equation}
where
\[
\begin{array}{l}
F_1^+(r,\theta) = - \cos (\theta ) (a_{01}+r (a_{21}+b_{11}) \sin (\theta ))-a_{11} r \cos ^2(\theta )-\sin (\theta ) (b_{01}+b_{21} r \sin (\theta )),\\
F_1^-(r,\theta) = -\cos (\theta ) (\alpha_{01}+r (\alpha_{21}+\beta_{11}) \sin (\theta ))-\alpha_{11} r \cos ^2(\theta )-\sin (\theta ) (\beta_{01}+\beta_{21} r \sin (\theta )).
\end{array}
\]

 Let $n, m$ be odd positive integers. So, system \eqref{polar-systemB} becomes equivalent to
\begin{equation} \label{polar-systemB1}
\dfrac{\text{d} r}{\text{d}\theta}=\left\lbrace 
\begin{array}{lr}
\displaystyle\sum_{i=1}^3\varepsilon^i F_i^-(r, \theta)+ \mathcal{O}(\varepsilon^4), &0<\theta(r)< \theta_1(r),\\
\displaystyle\sum_{i=1}^3\varepsilon^i F_i^+(r, \theta)+ \mathcal{O}(\varepsilon^4), &\theta_1(r)<\theta(r)< \pi+\theta_1(r),\\
\displaystyle\sum_{i=1}^3\varepsilon^i F_i^-(r, \theta)+ \mathcal{O}(\varepsilon^4), & \pi+\theta_1(r)<\theta(r)< 2\pi,\\
\end{array}\right. 
\end{equation}
  where $\theta_1(r)$ is the solution of the equation $r^n \sin^n (\theta)-r^m \cos ^{m} \theta=0$ in $\left[0, \frac{\pi}{2} \right]$. Through straightforward computations and  taking $x=r \cos(\theta_1(r)),$ we obtain that the first order Melnikov function of system \eqref{polar-systemB1} is given by
$$\Delta_1(x)=\frac{\varrho_1(x)}{2x^{-1} \sqrt{x^{2k}+x^2}}, \qquad x>0,$$ \\\
where $k=\dfrac{m}{n} \in \Q^+ \setminus  \left\{\dfrac{p}{q} \in \Q: (p,q)=1, (pq,2)=2\right\} $,
\begin{equation*}
\varrho_1(x)=
\left\{\begin{array}{ll}
v_0 u_2^k(x)+v_1 u_{8}^k(x) +v_2 u_0^k(x), & k\neq 1,\\
(v_0 +v_2)u_0^k(x)+v_1 u_{8}^k(x) , & k=1,\\
\end{array}\right.
\end{equation*} 
 and 
\begin{equation*}
\begin{array}{ll}
v_0=& 4 (a_{01}-\alpha_{01});\\
v_1=&-\pi(a_{11}+\alpha_{11}+b_{21}+\beta_{21});\\
v_2=&-4( b_{01}- \beta_{01}).
\end{array}
\end{equation*}
From Theorem \ref{thm:melnikov}, $m_1(m,n)$ coincides with the maximum number of positive zeros of the function $\varrho_1(x)$.
Notice that  $\varrho_1(x) \in \Span( \G_1^k)$  for $k=1$  and  $\varrho_1(x) \in \Span( \G_3^k)$  for $k\neq 1$. Moreover, $\{v_0, v_1,v_2\}$ and $\{v_0+v_2, v_1\}$ are linearly independent. Hence from Lemma \ref{lem0}, by statements  \textit{(2), (4)} and \textit{(5)} of Proposition \ref{prop5} and Theorem \ref{thm:melnikov}, we obtain that
\begin{equation*}
m_1(m,n)=
\left\{\begin{array}{ll}
1, & \dfrac{m}{n}=1, \\
\\
2, & \dfrac{m}{n} \in  \left[\dfrac{1}{2},1\right) \cup  (1,2],\\
\\
3, & \dfrac{m}{n} \in \left(0,\dfrac{1}{2}\right) \cup (2,\infty).
\end{array}\right.
\end{equation*}
Observe that  $\Delta_1$ is identically zero if and only if $(a_{01},a_{11},b_{01})=( \alpha_{01}, -\alpha_{11}-b_{21}-\beta_{21}, \beta_{01})$ for $k\neq1$ and $(a_{01}, a_{11})=(\alpha_{01}+b_{01}-\beta_{01},-\alpha_{11}-b_{21}-\beta_{21})$ for $k=1$. In this case, the second order Melnikov   function of system \eqref{polar-systemB1} is given by
$$\Delta_2(x)=\frac{\varrho_2(x)}{4x^{-3} \sqrt{x^{2k}+x^2} \left(m x^{2k}+n x^2\right)}, \qquad x>0,$$ \\\
with 
\begin{equation*}
\varrho_2(x)=
\left\{\begin{array}{ll}
\ell_{0} u_{9}^k(x)+\ell_{1} u_2^k(x)+\ell_{2} u_5^k(x)+\ell_{3} u_{4}^k(x)+\ell_{4} u_0^k(x)+\ell_{5} u_1^k(x)+\ell_{6} u_6^k(x) + & k\neq 1,\\
+\ell_{7} u_7^k(x), &\\
\ell_8 u_{0}^k(x)+\ell_9 u_{1}^k(x) , & k= 1,\\
\end{array}\right.
\end{equation*} 
 and 
\begin{equation*}
\begin{array}{ll}
\ell_0=&-\pi n (-\alpha_{11} \alpha_{21}+\alpha_{11} \beta_{11}+2 a_{12}+2 \alpha_{12}+\alpha_{11} a_{21}+a_{21} \beta_{21}-\alpha_{21} \beta_{21}-\alpha_{11} b_{11}-b_{11} \beta_{21}+\\
&\beta_{11} \beta_{21}+2 b_{22}+2 \beta_{22});\\
\ell_1=&8 (a_{02} n+\alpha_{01} b_{11} n-\beta_{01} b_{21} m-\beta_{01} b_{21} n-2 \alpha_{11} \beta_{01} m-\beta_{01} \beta_{21} m-\alpha_{01} \beta_{11} n-\alpha_{02} n-\\
&2 \alpha_{11} \beta_{01} n-\beta_{01} \beta_{21} n);\\
\ell_2=&8 (a_{21} \beta_{01} m-b_{02} m+\alpha_{01} b_{21} (m+n)-\alpha_{01} \beta_{21} m-\alpha_{21} \beta_{01} m+\beta_{02} m-\alpha_{01} \beta_{21} n);\\
\ell_3=&-\pi  (a_{21} (\alpha_{11}+\beta_{21}) (3 m-n)+\alpha_{11} b_{11} m+b_{11} \beta_{21} m-3 \alpha_{11} b_{11} n-3 b_{11} \beta_{21} n+2 b_{22} (m+n)-\\
&3 \alpha_{11} \alpha_{21} m-\alpha_{11} \beta_{11} m+2 \alpha_{12} m-3 \alpha_{21} \beta_{21} m-\beta_{11} \beta_{21} m+2 \beta_{22} m+\alpha_{11} \alpha_{21} n+3 \alpha_{11} \beta _{11} n+\\
&2 \alpha_{12} n+\alpha_{21} \beta_{21} n+3 \beta_{11} \beta_{21} n+2 \beta_{22} n + 2 a_{12} (m+n)); \\
\ell_4= & -8 n (2 \alpha_{01} (\alpha_{11}+\beta_{21})-\beta_{01} \beta_{11}+b_{02}-\beta_{02}+\beta_{01} b_{11}); \\
\ell_5=&2 \pi  (\alpha_{11}+\beta_{21}) (m-n) (2 \alpha_{11}+b_{21}+\beta_{21});\\
\ell_6=&-2 \pi  (\alpha_{11}+\beta_{21}) (b_{21}-\beta_{21}) (m-n);\\
\ell_7=&8 m (\alpha_{01} \alpha_{21}+a_{02}-\alpha_{02}-2 \alpha_{11} \beta_{01}-\alpha_{01} a_{21}-2 \beta_{01} \beta_{21});\\
\ell_8=& 8 n (-2 \alpha_{01} \alpha_{11}+\alpha_{01} \alpha_{21}-\alpha_{01} \beta_{11}+2 a_{02}-2 \alpha_{02}-2 \alpha_{11} \beta_{01}-\alpha_{01} a_{21}+a_{21} \beta_{01}-\alpha_{21} \beta_{01} \\&-4 \beta_{21} (\alpha_{01}+b_{01})-4 \alpha_{11} b_{01}+\beta_{01} \beta_{11}-2 b_{02}+2 \beta_{02}+\alpha_{01} b_{11}-\beta_{01} b_{11}+2 \alpha_{01} b_{21}-2 \beta_{01} b_{21}) ;\\
\ell_9=& -4 \pi  n (2 (\alpha_{12}+\beta_{22})+2 a_{12}+(\alpha_{11}+\beta_{21}) (a_{21}-\alpha_{21}-b_{11}+\beta_{11})+2 b_{22}).
\end{array}
\end{equation*}
Notice that $\varrho_2(x) \in \Span( \G_1^k)$  for $k=1$  and  $\varrho_2(x) \in \Span( \G_5^k)$  for $k \neq1$. Observe also that the sets $\{\ell_0, \ell_1,\ell_2,\ell_3,\ell_4,\ell_5,\ell_6,\ell_7\}$ and $\{\ell_8,\ell_9\}$ are linearly independent. Hence from Lemma \ref{lem0}, by statements  \textit{(2)},\textit{(8)} and \textit{(9)} of Proposition \ref{prop5} and Theorem \ref{thm:melnikov}, we obtain that
\begin{equation*}
m_2(m,n)=
\left\{\begin{array}{ll}
1, & \dfrac{m}{n}=1, \\
\\
7, & \dfrac{m}{n} \in  \left(0,\dfrac{1}{2} \right) \cup \left(\dfrac{2}{3},\dfrac{3}{4}\right] \cup \left[\dfrac{4}{3},\dfrac{3}{2}\right) \cup (2, \infty),\\
\\
8, & \dfrac{m}{n} \in \left(\dfrac{1}{2},\dfrac{2}{3}\right) \cup \left(\dfrac{3}{4}, 1\right) \cup \left(1, \dfrac{4}{3}\right) \cup \left(\dfrac{3}{2},2\right).
\end{array}\right.
\end{equation*}
Imposing that $\Delta_1=\Delta_2=0$, we obtaind that the third order Melnikov function of system \eqref{polar-systemB1} 
can be written as
$$\Delta_3(x)=\frac{\varrho_3(x)}{\nu(x)}, \qquad x>0,$$ \\\
where $\nu(x)\neq 0$ in $(0, \infty)$ and
\begin{equation*}
\varrho_3(x)=
\left\{\begin{array}{ll}
\omega_{0} u_{9}^k(x)+\omega_{1} u_2^k(x)+\omega_{2} u_5^k(x)+\omega_{3} u_{4}^k(x)+\omega_{4} u_0^k(x)+\omega_{5} u_1^k(x)+  & k\neq 1,\\
\omega_{6} u_6^k(x)+\omega_{7} u_7^k(x), &\\
\omega_8 u_{0}^k(x)+\omega_9 u_{1}^k(x)+\omega_{10} u_{13}^k(x) , & k= 1.\\
\end{array}\right.
\end{equation*} 
The explicit expressions of $\omega_i$, for $i=0,1,2,3$, are very long, therefore we shall omit them here. However,  it is easy to check  that the sets $\{\omega_0, \omega_1,\omega_2,\omega_3,\omega_4,\omega_5,\omega_6,\omega_7\}$ and $\{\omega_8,\omega_9, \omega_{10}\}$ are linearly independent. Therefore, $m_3(n,m)=2$ for $\frac{m}{n}=1$ and  $m_2(n,m)=m_3(m,n)$ for $\frac{m}{n}\neq1.$

Hence, we have concluded the proof of Theorem \ref{Theorem-Melnikov2}.

\subsection*{Proof of Theorem \ref{Theorem-Melnikov21}}
 Let $m$ and $n$ be even positive integers.  Analogously to the previous proof, we get that system \eqref{polar-systemB} becomes equivalent to
\begin{equation} \label{polar-system3}
\dfrac{\text{d} r}{\text{d}\theta}=\left\lbrace 
\begin{array}{lr}
\displaystyle\sum_{i=1}^2\varepsilon^i F_i^-(r, \theta)+ \mathcal{O}(\varepsilon^3), &0<\theta(r)< \theta_1(r),\\
\displaystyle\sum_{i=1}^2\varepsilon^i F_i^+(r, \theta)+ \mathcal{O}(\varepsilon^3), & \theta_1(r)<\theta(r)<\pi-\theta_1(r),\\
\displaystyle\sum_{i=1}^2\varepsilon^i F_i^-(r, \theta)+ \mathcal{O}(\varepsilon^3), & \pi-\theta_1(r) <\theta(r)< \pi+\theta_1(r),\\
\displaystyle\sum_{i=1}^2\varepsilon^i F_i^+(r, \theta)+ \mathcal{O}(\varepsilon^3), & \pi+\theta_1(r)<\theta(r)< 2\pi-\theta_1(r),\\
\displaystyle\sum_{i=1}^2\varepsilon^i F_i^-(r, \theta)+ \mathcal{O}(\varepsilon^3), & 2\pi-\theta_1(r)<\theta(r)< 2\pi.\\
\end{array}\right. 
\end{equation}
Where $\theta_1(r)$  is the solution of the equation $r^n \sin^n (\theta)-r^m \cos ^{m} \theta=0$ in $\left[0, \frac{\pi}{2} \right]$. Taking $x=r \cos(\theta_1(r)),$ we get that the first order Melnikov function of system \eqref{polar-system3} is given by
$$\D_1(x)=\frac{\varrho_4(x) }{x^{-1}\sqrt{x^{2k}+x^2}}, \qquad x>0,$$ \\
where $k=\dfrac{m}{n} \in \Q^+$,
\begin{equation*}
\varrho_4(x)=
\left\{\begin{array}{ll}
 v_0 u_1^k(x) +v_1 u_{8}^k(x)+v_2 u_{11}^k(x), & k\neq 1,\\
 (v_0+2v_1+2v_2) u_{1}^k(x), & k= 1,\\
\end{array}\right.
\end{equation*} 
 and 
\begin{equation*}
\begin{array}{ll}
v_0=& 2(a_{11}-\alpha _{11}-b_{21}+\beta_{21});\\
v_1=&-\pi(a_{11}+b_{21});\\
v_2=&2(a_{11}-\alpha_{11}+b_{21}-\beta_{21}).
\end{array}
\end{equation*}
Observe that  $q_4(x) \in \Span( \G_0^k)$  for $k=1$  and  $q_4(x) \in \Span( \G_2^k)$  for $k \neq1$. Furthermore, $\{v_0, v_1,v_2\}$ is linearly independent. Hence from Lemma \ref{lem0}, by statements  \textit{(1)} and \textit{(3)} of Proposition \ref{prop5} and Theorem \ref{thm:melnikov}, we obtain that
\begin{equation*}
m_1(m,n)=
\left\{\begin{array}{ll}
0, & \dfrac{m}{n}= 1,\\
\\
2, & \dfrac{m}{n} \neq 1.
\end{array}\right.
\end{equation*}
 In this case,  $\Delta_1=0$  if and only if $(a_{11},b_{21}, \alpha_{11})=( -\beta_{21}, \beta_{21},- \beta_{21})$ for $k\neq1$ and $\alpha_{11}=(2+\pi)^{-1}( -\pi  a_{11}+2 a_{11}-2 b_{21}-\pi  \beta_{21}+2 \beta_{21}-\pi b_{21})$ for $k=1$. Therefore, the second order Melnikov   function of system \eqref{polar-system3} is given by
$$\Delta_2(x)=\frac{\varrho_5(x)}{x^{-3} \sqrt{x^{2k}+x^2} \left(m x^{2k}+n x^2\right)}, \qquad x>0,$$ \\\
with
\begin{equation*}
\varrho_5(x)=
\left\{\begin{array}{ll}
\ell_0 u_{10}^{k}(x)+ \ell_1 u_{1}^{k}(x) + \ell_2 u_{12}^{k}(x) + \ell_3 u_{6}^{k}(x)+ \ell_4 u_{3}^{k}(x), & k\neq 1,\\
\ell_5 u_3^k(x) + \ell_6 u_1^k(x) , & k= 1,\\
\end{array}\right.
\end{equation*} 
 and 
\begin{equation*}
\begin{array}{ll}
\ell_0=&-\pi (a_{12}+b_{22});\\
\ell_1=&2 n (a_{12}-\alpha_{12}-2 b_{11} \beta_{21}+2 \beta_{11} \beta_{21}-b_{22}+\beta_{22});\\
\ell_2=&2n (a_{12}-\alpha_{12}+b_{22}-\beta_{22});\\
\ell_3=&2 m (a_{12}-\alpha_{12}+2 a_{21} \beta_{21}-2 \alpha_{21} \beta_{21}-b_{22}+\beta_{22});\\
\ell_4=&4 a_{01} b_{01} m-4 \alpha_{01} (b_{01} (m-n)+\beta_{01} n); \\
\ell_5=& 4 n (a_{01} b_{01}-\alpha_{01} \beta_{01}); \\
\ell_6=& -n (2+\pi)^{-1} (a_{11} ((\pi -2) (\pi \alpha_{21}-(4+\pi ) \beta_{11})-\pi  (2+\pi ) a_{21}+(\pi -4) (2+\pi ) b_{11}) \\& +2 \left(\pi ^2-4\right) a_{12}+2 \pi ^2 \alpha_{12}+8 \pi  \alpha_{12}+8 \alpha_{12}-\pi ^2 a_{21} b_{21}-8 a_{21} b_{21}-6 \pi a_{21} b_{21} \\&+8 \alpha_{21} \beta_{21}+\pi ^2 b_{11} b_{21}+2 \pi b_{11} b_{21}+8 \beta_{11} \beta_{21}+\pi ^2 \alpha_{21} b_{21}+2 \pi  \alpha_{21} b_{21}-\pi \beta_{11} b_{21}\\&-6 \pi  \beta_{11} b_{21}-8 \beta_{11} b_{21}+2 \pi ^2 b_{22}+8 b_{22}+2 \left(\pi ^2-4\right) \beta_{22}+8 \pi  b_{22}).
\end{array}
\end{equation*}
Thus, $\varrho_5(x) \in \Span( \G_{11}^k)$  for $k=1$,  $q_5(x) \in \Span( \G_8^k)$ for $k \neq1$. Moreover, the sets $\{\ell_0, \ell_1,\ell_2,\ell_3,\ell_4\}$ and $\{\ell_5,\ell_6\}$ are linearly independent. Hence from Lemma \ref{lem0}, by item  \textit{(13)} of Proposition \ref{prop5}, by the statements  \textit{(1)} and \textit{(2)} of Proposition \ref{prop6} and Theorem \ref{thm:melnikov}, we obtain that
\begin{equation*}
m_2(m,n)=
\left\{\begin{array}{ll}
1, & \dfrac{m}{n}=1, \\
\\
4, & \dfrac{m}{n} \in  \left[\dfrac{1}{5},\dfrac{1}{3} \right] \cup \left[\dfrac{1}{2},1\right) \cup (1,2] \cup [3,5],\\
\\
5, & \dfrac{m}{n} \in \left(0,\dfrac{1}{5}\right) \cup \left(\dfrac{1}{3}, \dfrac{1}{2}\right) \cup \left(2, 3\right) \cup \left(5,\infty\right).
\end{array}\right.
\end{equation*}

Hence, we have concluded the proof of Theorem \ref{Theorem-Melnikov21}.

\subsection*{Proof of Theorem \ref{Theorem-Melnikov22}}
 Let $ m$ be even and $n$ be odd positive integers. Applyng the polar coordinates $x= r\cos(\theta)$ and $y=r\sin(\theta)$, system \eqref{polar-systemB} becomes equivalent to
\begin{equation} \label{polar-system4}
\dfrac{\text{d} r}{\text{d}\theta}=\left\lbrace 
\begin{array}{lr}
\displaystyle\sum_{i=1}^3\varepsilon^i F_i^-(r, \theta)+ \mathcal{O}(\varepsilon^4), &0<\theta(r)< \theta_1(r),\\
\displaystyle\sum_{i=1}^3\varepsilon^i F_i^+(r, \theta)+ \mathcal{O}(\varepsilon^4), & 0<\theta(r)<2\pi- \theta_1(r),\\
\displaystyle\sum_{i=1}^3\varepsilon^i F_i^-(r, \theta)+ \mathcal{O}(\varepsilon^4), & 2\pi- \theta_1(r)<\theta(r)< 2\pi.\\
\end{array}\right. 
\end{equation}
The first order Melnikov function of system \eqref{polar-system4} is given by
$$\D_1(x)=\frac{\varrho_6(x)}{x^{-1} \sqrt{x^{2k}+x^2}}, \qquad x>0,$$ \\
where $k=\dfrac{m}{n} \in \Q^+ \setminus  \left\{\dfrac{p}{q} \in \Q: (p,q)=1, (p,2)=1\right\} $,
\begin{equation*}
\varrho_6(x)=
\begin{array}{ll}
v_0 u_2^k(x)+v_1 u_1^k(x) +  v_2 u_{8}^k(x)+v_3 u_{11}^k(x)\\
\end{array}
\end{equation*} 
 and 
\begin{equation*}
\begin{array}{ll}
v_0=& 2 (a_{01}-\alpha_{01});\\
v_1=&a_{11}-\alpha_{11}-b_{21}+\beta_{21};\\
v_2=&-\pi (a_{11}+b_{21});\\
v_3=& a_{11}-\alpha_{11}+b_{21}-\beta_{21}.
\end{array}
\end{equation*}
Thus, $\varrho_6(x) \in \Span( \G_4^k)$. Notice that $\{v_0, v_1,v_2,v_3\}$ is linearly independent. Hence by statements  \textit{(6)} and \textit{(7)} of Proposition \ref{prop5} and Theorem \ref{thm:melnikov}, we obtain that
\begin{equation*}
m_1(m,n)=
\left\{\begin{array}{ll}
3, & \dfrac{m}{n} \in  \left[\dfrac{1}{2},1\right) \cup  \left(1,\dfrac{3}{2}\right] \cup [2, \infty),\\
\\
4, & \dfrac{m}{n} \in \left(0,\dfrac{1}{2}\right) \cup \left(\dfrac{3}{2},2\right).
\end{array}\right.
\end{equation*}

Notice that  $\Delta_1=0$  if and only if $(b_{01},a_{11},b_{21}, \alpha_{11})=(\beta_{01}, -\beta_{21}, \beta_{21},- \beta_{21})$. Then the second order Melnikov  function of system \eqref{polar-system4} is given by
$$\Delta_2(x)=\frac{\varrho_7(x)}{2 x^{-3} \sqrt{x^{2k}+x^2} \left(m x^{2k}+n x^2\right)}, \qquad x>0,$$ \\\
with
\begin{equation*}
\varrho_7(x)= \left\{\begin{array}{ll}
\omega_0u_{10}^k(x)+\omega_1u_{5}^k(x)+\omega_2u_{12}^k(x)+\omega_3u_{6}^k(x)+\omega_4u_{3}^k(x)+\omega_5u_{0}^k(x)+\omega_6u_{1}^k(x), & k \neq \frac{2}{3},2, \\
\ell_0u_{10}^k(x)+\ell_1u_{5}^k(x)+\ell_2u_{12}^k(x)+(\ell_3+\ell_5)u_{0}^k(x)+\ell_4u_{3}^k(x)+\ell_6u_{1}^k(x), & k=\frac{2}{3}, \\
\ell_0u_{10}^k(x)+(\ell_1+\ell_6)u_{5}^k(x)+\ell_2u_{12}^k(x)+\ell_3u_{6}^k(x)+(\ell_4+\ell_5)u_{0}^k(x), & k=2, \\
\end{array} \right.
\end{equation*} 
 and 
\begin{equation*}
\begin{array}{ll}
\ell_0=&-\pi n (a_{12}+\alpha_{12}+b_{22}+\beta_{22});\\
\ell_1= &4 m (a_{01} \beta_{21}-\alpha_{01} \beta_{21}+a_{21} \beta_{01}-\alpha_{21} \beta_{01}-b_{02}+\beta_{02});\\
\ell_2=& 2n (a_{12}-\alpha_{12}+b_{22}-\beta_{22});\\
\ell_3=& 2 m (a_{12}-\alpha_{12}+2 a_{21} \beta_{21}-2 \alpha_{21} \beta_{21}-b_{22}+\beta_{22});\\
\ell_4= &4 m \beta_{01}(a_{01}-\alpha_{01}); \\
\ell_5= &4 n (\beta_{01} \beta_{11}-b_{02}+\beta_{02}-\beta_{01} b_{11}); \\
\ell_6= &2 n (a_{12}-\alpha_{12}-2 b_{11} \beta_{21}+2 \beta_{11} \beta_{21}-b_{22}+\beta_{22}).
\end{array}
\end{equation*}

Thus, $\varrho_7(x) \in \Span( \G_9^k)$. Moreover, the sets $\{\ell_0, \ell_1,\ell_2,\ell_3,\ell_4,\ell_5,\ell_6\}$, $\{\ell_0, \ell_1,\ell_2,\ell_3+\ell_5,\ell_4,\ell_6\}$ and $\{\ell_0, \ell_1+\ell_6,\ell_2,\ell_3,\ell_4+\ell_5\}$  are linearly independent. Hence by statements \textit{(10)} and \textit{(11)} of Proposition \ref{prop5}, by propositions \ref{prop8} and \ref{prop9} and Theorem \ref{thm:melnikov}, we obtain that
\begin{equation*}
m_2(m,n)=
\left\{\begin{array}{ll}
4, & \dfrac{m}{n}=2, \\
\\
5, & \dfrac{m}{n}=\dfrac{2}{3}, \\
\\
6, & \dfrac{m}{n} \in \left(\dfrac{1}{5}, k_2 \right) \cup \left(\dfrac{2}{3},\dfrac{3}{4} \right) \cup \left[\dfrac{4}{5},1 \right) \cup \left(1,\dfrac{4}{3} \right] \cup \left(\dfrac{3}{2}, k_3 \right),\\
\\
7, & \dfrac{m}{n} \in \left(k_1,\dfrac{1}{5} \right) \cup \left(\dfrac{1}{3},\dfrac{1}{2} \right) \cup \left(\dfrac{1}{2},\dfrac{2}{3} \right) \cup \left(\dfrac{3}{4},\dfrac{4}{5} \right) \cup \left(\dfrac{4}{3},\dfrac{3}{2} \right) \cup (k_5, \infty),
\end{array}\right.
\end{equation*}
and
\begin{equation*}
m_2(m,n) \in 
\left\{\begin{array}{ll}
7 \leq m_2(m,n) \leq 8, & \dfrac{m}{n} \in  \left(k_2,\dfrac{1}{3} \right) \cup (k_3,2) \cup (2,3), \\
\\
7 \leq m_2(m,n) \leq 9, & \dfrac{m}{n} \in (k_4,k_5), \\
\\
6 \leq m_2(m,n) \leq 11, & \dfrac{m}{n} \in (0,k_0) \cup (k_0,k_1) \cup (3,k_4).
\end{array}\right.
\end{equation*}
 Computing the third order Melnikov  function of system \eqref{polar-system4} we obtain that
$$\Delta_3(x)=\frac{\varrho_9(x)}{\nu_2(x)}, \qquad x>0,$$ \\\
with $\nu(x)\neq 0$ in $(0, \infty)$ and
\begin{equation*}
\varrho_9(x)= \left\{\begin{array}{ll}
\ell_0u_{10}^k(x)+\ell_1u_{5}^k(x)+\ell_2u_{12}^k(x)+\ell_3u_{6}^k(x)+\ell_4u_{3}^k(x)+\ell_5u_{0}^k(x)+\ell_6u_{1}^k(x), & k \neq \frac{2}{3},2, \\
\ell_0u_{10}^k(x)+\ell_1u_{5}^k(x)+\ell_2u_{12}^k(x)+(\ell_3+\ell_5)u_{0}^k(x)+\ell_4u_{3}^k(x)+\ell_6u_{1}^k(x), & k=\frac{2}{3}, \\
\ell_0u_{10}^k(x)+(\ell_1+\ell_6)u_{5}^k(x)+\ell_2u_{12}^k(x)+\ell_3u_{6}^k(x)+(\ell_4+\ell_5)u_{0}^k(x), & k=2. \\
\end{array} \right.
\end{equation*} 
The explicit expressions of $\omega_i$, for $i=0,1,2,3$, are very long, therefore we shall omit them here. However,  it is easy to check  that the sets $\{\omega_0, \omega_1,\omega_2,\omega_3,\omega_4,\omega_5,\omega_6,\omega_7\}$ and $\{\omega_8,\omega_9, \omega_{10}\}$ are linearly independent. Therefore, $m_3(n,m)=2$ for $\frac{m}{n}=1$ and  $m_2(n,m)=m_3(m,n)$ for $\frac{m}{n}\neq1.$\\

Hence, we have concluded the proof of Theorem \ref{Theorem-Melnikov22}.

\section{Appendix: Pproof of the proposition \ref{prop8}.} \label{Apend}

Let $k_1, k_2, k_3, k_4, k_5$ in $\R$, where
 \begin{enumerate}[(i)]
	 	\item  $k_1$ is the irrational root in $\left(0, \frac{1}{5} \right)$ of  $$q_1(x)=2 x^6-153 x^5+894 x^4-1625 x^3+1234 x^2-444 x+48;$$
	 	\item $k_2,k_3,k_5$ are the irrational roots in $\left(\frac{1}{5},\frac{1}{3} \right),\Big(\frac{3}{2},2 \Big),$ and  $(3,4)$, respectively, of  
  		\[
  		\begin{array}{ll}
  		q_2(x)=&177586560 x^{17}-1447424208 x^{16}+5969663136 x^{15}-29387373904 x^{14}\\&+129626832188 x^{13}-293774330511 x^{12}+102470736381 x^{11}+1027573184492 x^{10}\\&-2645532232771 x^9+3259827826136 x^8-2344796073539 x^7+997977148820 x^6\\&-227233713561 x^5+15757275163 x^4+3311923726 x^3-563207524 x^2\\&+11249208 x+19744;
  		\end{array}
  		\]
	 	
	 	\item  $k_4$  is the irrational root in $(3,4)$ of $$q_3(x)=862 x^5-4831 x^4+8308 x^3-5186 x^2+974 x+1.$$
 \end{enumerate}

By straightforward computations, we obtain
\begin{equation*}
\sgn(\mathcal{A},\mathcal{B},\mathcal{C},\mathcal{D},\mathcal{E})=\left\{
\begin{array}{ll}
(1,1,1,\sgn(\mathcal{D}),-1),& k \in \left(\frac{1}{3},\frac{1}{2}\right),\\
(-1,-1,\sgn(\mathcal{C}),1,1),& k \in \left(\frac{1}{2}, \frac{2}{3}\right),\\
(-1,-1,-1,0,0), & k=\frac{2}{3}, \\
(-1,-1,-1,-1,-1),& k \in \Big( \frac{2}{3},\frac{3}{4} \Big), \\
(1,\sgn(\mathcal{B}),-1,-1,-1),& k \in \Big(\frac{3}{4},k_6 \Big),\\
(1,1,\sgn(\mathcal{C}),-1,-1), & k=k_6, \\
(1,1,1,-1,-1),& k \in  \left(k_6,\frac{4}{5} \right), \\
(1,1,1,0,0), & k=\frac{4}{5}, \\
(1,1,1,1,1),& k \in \left(\frac{4}{5},1 \right) \cup \left(1,\frac{4}{3} \right),\\
(0,0,0,1,1), & k=\frac{4}{3}, \\
(-1,-1,-1,\sgn(\mathcal{D}),1),& k \in \left(\frac{4}{3}, \frac{3}{2} \right), \\
\end{array}\right.
\end{equation*}
where $k_6$  is the zero  in $\left(\frac{3}{4},\frac{4}{5} \right)$ of the polynomial  $240 x^7-610 x^6+885 x^5-1830 x^4+2034 x^3-930 x^2+201 x-38$.
Applying the Descartes rule of signs, we have that $\mathcal{P}$ does not vanish in $(0, \infty)$ for $\Big[\frac{2}{3}, \frac{3}{4} \Big) \cup \left[\frac{4}{5}, 1 \right) \cup \left(1,\frac{4}{3} \right]$ and  $\mathcal{P}$   has one positive zero, which is simple, for $k\in\left(\frac{1}{3},\frac{1}{2} \right) \cup \left(\frac{1}{2}, \frac{2}{3} \right)\cup \left(\frac{3}{4}, \frac{4}{5} \right) \cup \left(\frac{4}{3}, \frac{3}{2} \right).$\\

Now,  by computation the discriminant of $\mathcal{P}$  is given by
\[
	\begin{array}{ll}
	\Dis(\mathcal{P})= -192 (4-3 k)^2 (k-1)^{12} (2 k-1) (3 k-2) (5 k-4) \mathcal{M}\mathcal{N} ,
	\end{array}
	\]
	with
	\[
	\begin{array}{ll}
	\mathcal{M}=&32400 k^{12}-189990 k^{11}+503307 k^{10}-781069 k^9+742059 k^8-352440 k^7-84219 k^6\\&+253647 k^5-161955 k^4+28332 k^3+22032 k^2-15336 k+3296, \\
	\mathcal{N}= &177586560 k^{17}-1447424208 k^{16}+5969663136 k^{15}-29387373904 k^{14}\\&+129626832188 k^{13}-293774330511 k^{12}+102470736381 k^{11}+1027573184492 k^{10}\\&-2645532232771 k^9+3259827826136 k^8 -2344796073539 k^7+997977148820 k^6\\&-227233713561 k^5+15757275163 k^4+3311923726 k^3-563207524 k^2+11249208 k\\&+19744.
	\end{array}
	\]
So,	
\begin{equation}
\sgn(\Dis(\mathcal{P}))=\left\{
\begin{array}{ll}
-1,& k  \in  \left(k_0,\frac{1}{5} \right) \cup \left(\frac{1}{5},k_2 \right) \cup \Big(\frac{3}{2},k_3 \Big)\cup (k_5,\infty), \nonumber \\
1, & k \in  (0,k_0)  \cup \left(k_2,\frac{1}{3} \right) \cup (k_3,k_5).
\end{array}\right.
\end{equation}
Therefore, $\mathcal{P}$ has two real zeros counting multiplicity for $ \left(k_0,\frac{1}{5} \right) \cup \left(\frac{1}{5},k_2 \right) \cup \left(\frac{3}{2},k_3 \right)\cup (k_5,\infty),$ and $P$ has 4 or 0  real zeros counting multiplicit for  $k \in  (0,k_0)  \cup \left(k_2,\frac{1}{3} \right) \cup (k_3,2)\cup (3,k_5).$  In the
sequel, we shall determine the exact number of zeros in  $(0, \infty)$.  Through straightforward computation it is direct to see that
 \begin{equation}
\sgn(\mathcal{A},\mathcal{B},\mathcal{C},\mathcal{D},\mathcal{E})=\left\{
\begin{array}{ll}
(-1, \sgn(B), 1, -1,1),& k \in \left(0, \frac{1}{5} \right), \nonumber \\
(-1, \sgn(B), 1, -1,-1),& k \in \left(\frac{1}{5}, \frac{1}{3}\right), \nonumber \\
(-1, \sgn(B), \sgn(C), -1,-1),& k \in \Big(\frac{3}{2}, 2 \Big), \nonumber \\
(0, 1, 1, -1,-1),& k=2, \nonumber \\
(-1, \sgn(B), 1, -1,-1),& k \in ( 2,3), \nonumber \\
(-1, -1, 1, -1,1),& k \in ( 3,5), \nonumber \\
(0,- 1, 1, -1,1),& k=5, \nonumber \\
(1, -1, 1, -1,1),& k \in ( 5,\infty), \nonumber \\
\end{array}\right.
\end{equation}
and $\mathcal{P}(-1)>0$ for $k \in \left(\frac{1}{5}, k_2 \right) \cup \left(k_2, \frac{1}{3} \right) \cup\Big(\frac{3}{2}, k_3 \Big)\cup (k_3,  3)$, $\mathcal{P}(1)>0$ for $k\in[2,3)$ and $\mathcal{P}(3)<0$ for $k \in (4, \infty)$. Consequently, 
\begin{enumerate}
\item For $k \in \left(k_0,\frac{1}{5} \right) \cup (k_5,5)$, $\mathcal{P}$ has two real zeros, which are simple and one of them is positive and the other is negative;
\item For $k \in \left(\frac{1}{5},k_2 \right) \cup \Big(\frac{3}{2},k_3 \Big)$, $\mathcal{P}$ has two real zeros, which are simple and negative;
\item For $k \in \left(k_2, \frac{1}{3} \right)  \cup (k_3,2)\cup(2,3)$, $\mathcal{P}$ has four real zeros, which are simple and two of them are positive and the other is negative;
\item For $k \in  (0,k_0) \cup (3, k_5) $, $\mathcal{P}$ has four real zeros, which are simple and three of them are positive and the other is negative;
\item For $k=2,5$, $\mathcal{P}$ has one real zero, which is simple and positive;
\item For $k \in (5, \infty)$, $\mathcal{P}$ has two real zeros, which are simple and positive.
\end{enumerate}

This concludes the proof of Proposition \ref{prop8}.



\section*{Declarations}Conflict of interest No conflict of interest exists in the submission of this manuscript, and manuscript is approved by all authors for publication.

\end{document}